\newtheorem{theorem}{Theorem}[chapter]
\newtheorem*{theorem*}{Theorem}
\newtheorem{lemma}[theorem]{Lemma}
\newtheorem{cor}[theorem]{Corollary}
\newtheorem{prop}[theorem]{Proposition}
\theoremstyle{definition}
\newtheorem{defn}[theorem]{Definition}
\newtheorem*{defn*}{Definition}
\newtheorem{xca}[theorem]{Exercise}
\theoremstyle{remark}
\newtheorem{remark}[theorem]{Remark}
\numberwithin{section}{chapter}
\numberwithin{equation}{chapter}
\newcommand{\Aut}{\mathop{\mathrm{Aut}}\nolimits}
\newcommand{\bsl}{\backslash}
\newcommand{\C}{\mathbf{C}}
\newcommand{\dash}{\nobreakdash-}
\newcommand{\End}{\mathop{\mathrm{End}}\nolimits}
\newcommand{\F}{\mathbf F}
\newcommand{\Fp}{\F_p}
\newcommand{\Fq}{\F_q}
\newcommand{\FQ}{\F_{q^2}}
\newcommand{\FT}{\mathrm{FT}}
\newcommand{\half}{\tfrac{1}{2}}
\newcommand{\Hilbert}{\mathcal H}
\renewcommand{\hat}{\widehat}
\newcommand{\Hom}{\mathop{\mathrm{Hom}}\nolimits}
\newcommand{\inv}{^{-1}}
\newcommand{\Mat}[4]{
  \begin{pmatrix}
    #1 & #2 \\
    #3 & #4
  \end{pmatrix}
}
\newcommand{\mat}[4]{
  \left(
    \begin{smallmatrix}
      #1 & #2 \\
      #3 & #4
    \end{smallmatrix}
  \right)
}
\newcommand{\N}{\mathbf N}
\newcommand{\ol}{\overline}
\newcommand{\tr}{\mathrm{tr}}
\renewcommand{\vec}{\mathbf}
\newcommand{\Z}{\mathbf{Z}}
\begin{document}

\frontmatter

\title[Representations of $GL_2(\Fq)$ and $SL_2(\Fq)$]{Representations of \\ $GL_2(\Fq)$ and $SL_2(\Fq)$,\\and some remarks about $GL_n(\Fq)$}

\author{Amritanshu Prasad}
\address{The Institute of Mathematical Sciences\\Chennai.}
\email{amri@imsc.res.in}
\urladdr{\href{http://www.imsc.res.in/\~amri}{http://www.imsc.res.in/\~{}amri}}
\date{July 2007}
\dedicatory{\hspace{1cm}\\Notes from a course taught at\\the Advanced Instructional School on Representation Theory and Related Topics\\held at the Bhaskaracharya Pratishthana and the University of Pune in July 2007}
\subjclass[2000]{20C33}

\keywords{Representation theory, algebraic groups, finite fields, Weil representation}

\date{July 2007}

\maketitle

\setcounter{page}{4}

\tableofcontents

\chapter*{Introduction}

The goal of these notes is to give a self-contained account of the representation theory of $GL_2$ and $SL_2$ over a finite field, and to give some indication of how the theory works for $GL_n$ over a finite field.

Let $\Fq$ denote a finite field with $q$ elements, where $q$ is a prime power.
The irreducible characters of $GL_2(\Fq)$ and $SL_2(\Fq)$ were classified by Herbert~E.~Jordan~\cite{Jordan07} and Issai~Schur~\cite{Schur06} in 1907.
The method used here is not that of Jordan or Schur, but depends on a construction known as the \emph{Weil representation} introduced by Andr\'e Weil in his famous article \cite{MR0165033}.
Weil's method was used to obtain all the irreducible representations of $SL_2(\Fq)$ in \cite{MR0219635} by Shun'ichi Tanaka.
A very readable exposition is also found in Daniel Bump's book \cite[Section~4.1]{MR1431508} in the case of $GL_2(\Fq)$.
These two works have been my main sources.
The use of the Weil representation has the disadvantage that it does not generalise to other groups (such as $GL_n(\Fq)$, $SL_n(\Fq)$, or other finite groups of Lie type, with the exception of $Sp_4(\Fq)$).
On the other hand, the Weil representation is important in number theory as well as representation theory. For example, a version of the Weil representation plays an important role in the construction of supercuspidal representations of reductive groups over non-Archimedean local fields, as was first demonstrated by Takuro Shintani in \cite{MR0233931}.
A systematic use of the Weil representation in this context is made by Paul G{\'e}rardin in \cite{MR0396859}.
These techniques have been used with considerable success to prove the local Langlands conjectures for non-Archimedean local fields, but this is a matter that will not be discussed here.

For $n\times n$ matrices, the representations were classified by James A.~Green in 1955 \cite{MR0072878}.
The general linear groups are special cases of a class of groups known as \emph{reductive groups}, which occur as closed subgroups of general linear groups (in the sense of algebraic geometry).
In 1970, T.~A.~Springer presented a set of conjectures describing the characters of irreducible representations of all reductive groups over finite fields, some of which he attributed to Ian~G.~MacDonald~\cite{MR0263942}.
The essence of these conjectures is that the irreducible representations of reductive groups over finite fields occur in families associated to \emph{maximal tori} in these groups (in this context, a torus is a subgroup that is isomorphic to a product of  multiplicative groups of finite extensions of $\Fq$).
A big breakthrough in this subject came in 1976, when Pierre~Deligne and George~Lusztig~\cite{MR0393266}, were able to construct the characters of almost all the irreducible representations (in an asymptotic sense) of all reductive groups over finite fields, in particular, proving the conjectures of MacDonald.
Much more information about the irreducible representations of reductive groups over finite fields has been obtained in later work, particularly by Lusztig (see e.g., \cite{MR804741}).
The above survey is far from complete and fails to mention many important developments in the subject. 
It is intended only to give the reader a rough sense of where the material to be presented in these lectures lies in the larger context of 20th century mathematics. 

I am grateful to Pooja Singla, who carefully read an earlier version of these notes and pointed out several errors. I have had many interesting discussions with her on the representation theory of $GL_2(\Fq)$, which have helped me when I wrote these notes. I am grateful to M.~K.~Vemuri, from whom I have learned a large part of what I know about Heisenberg groups and Weil representations.

\mainmatter
\chapter{General results from representation theory}
\label{cha:general-results}
\section{Basic definitions}
\label{sec:defintions}

Let $G$ be a finite group.
A \emph{representation}\index{myindex}{representation} of $G$ on a vector space $V$ is a pair $(\pi,V)$ where $V$ is a complex vector space and $\pi$ is a homomorphism $G\to GL(V)$.
Often, we will denote $(\pi,V)$ simply by $\pi$, specially when the vector space $V$ is specified implicitly.
The dimension of $V$ is called the \emph{degree}\index{myindex}{representation!degree of a} of the representation $(\pi,V)$.
In these notes all representations will be assumed to be of finite degree.
If $(\pi,V)$ and $(\tau,U)$ are two representations of $G$, then a linear map $\phi:U\to V$ is called a homomorphism\index{myindex}{representation!homomorphism of} of $G$-modules, or an \emph{intertwiner}\index{myindex}{intertwiner} if
\begin{equation*}
  \phi(\tau(g)u)=\pi(g)\phi(u) \mbox{ for all } u\in U.
\end{equation*}
The space of all homomorphisms $(\tau,U)\to (\pi,V)$ will be denoted by $\Hom_G(\tau,\pi)$.
When $\phi$ is invertible, it is an isomorphism, and we say that $\tau$ is \emph{isomorphic} to $\phi$\index{myindex}{isomorphism of representations}.
The representations $\pi$ and $\tau$ are said to be \emph{disjoint}\index{myindex}{disjoint representations} if $\Hom_G(\tau,\pi)=0$.
\section{The Pontryagin dual of a finite abelian group}
\label{sec:pontryagin_dual}
\index{myindex}{Pontryagin-dual}
Let $G$ be an abelian group.
The binary operation on the group will be written additively.
A \emph{character} of $G$ is a homomorphism $\chi:G\to\C^*$.
In other words, $\chi(x+x')=\chi(x)\chi(x')$ for all $x,x'\in G$.
A character $\chi$ is called \emph{unitary} if $|\chi(x)|=1$ for all $x\in G$.
\begin{xca}
  Show that every character of a finite abelian group is unitary. 
\end{xca}
If $G$ is a finite abelian group, its \emph{Pontryagin dual} is the set $\hat{G}$ of its characters.
Under point-wise multiplication of characters, $\hat{G}$ forms a group.
Once again, the binary operation is written additively, so that given characters $\chi$ and $\chi'$ of $G$, $(\chi+\chi')(x)=\chi(x)\chi'(x)$ for all $x\in G$.
This is a special case of a general construction for \emph{locally compact abelian groups}.
\begin{prop}
  \label{prop:Pontryagin-duality}
  For any finite abelian group $G$, $G\cong \hat{G}$.
\end{prop}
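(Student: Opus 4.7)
The plan is to reduce the claim to the case of cyclic groups via the structure theorem for finite abelian groups, after first establishing that Pontryagin duality is compatible with direct sums.

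First, I would prove the lemma that $\widehat{G_1 \oplus G_2} \cong \hat{G_1} \oplus \hat{G_2}$ for any finite abelian groups $G_1, G_2$. The map in one direction sends a character $\chi$ of $G_1 \oplus G_2$ to the pair $(\chi|_{G_1}, \chi|_{G_2})$; the inverse sends $(\chi_1, \chi_2)$ to the character $(x_1, x_2) \mapsto \chi_1(x_1)\chi_2(x_2)$. Both maps are clearly group homomorphisms and are mutually inverse, so this is routine. By induction, $\widehat{G_1 \oplus \cdots \oplus G_r} \cong \hat{G_1} \oplus \cdots \oplus \hat{G_r}$.

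Second, I would handle the cyclic case: if $G = \Z/n\Z$, then $\hat{G} \cong G$. A character $\chi$ of $\Z/n\Z$ is determined by $\chi(1)$, which must be an $n$-th root of unity in $\C^*$ (since $n \cdot 1 = 0$ forces $\chi(1)^n = 1$). Conversely, any $n$-th root of unity $\zeta$ gives a well-defined character $k \mapsto \zeta^k$. Hence $\hat{G}$ is isomorphic to the group $\mu_n$ of $n$-th roots of unity in $\C^*$, which is cyclic of order $n$ (generated, e.g., by $e^{2\pi i/n}$) and therefore isomorphic to $\Z/n\Z$.

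Third, I would invoke the structure theorem for finite abelian groups to write $G \cong \Z/n_1\Z \oplus \cdots \oplus \Z/n_r\Z$ and then combine the two lemmas:
\begin{equation*}
  \hat{G} \cong \widehat{\Z/n_1\Z} \oplus \cdots \oplus \widehat{\Z/n_r\Z} \cong \Z/n_1\Z \oplus \cdots \oplus \Z/n_r\Z \cong G.
\end{equation*}

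I do not anticipate a serious obstacle: each individual step is essentially a direct verification. The only conceptual point worth flagging is that the resulting isomorphism $G \cong \hat{G}$ is highly non-canonical, depending on a choice of generators for the cyclic factors and on a choice of primitive roots of unity, in contrast to the canonical isomorphism $G \cong \hatt{G}$ which one would prove separately. If one wished to avoid the structure theorem, an alternative approach would be to argue by induction on $|G|$, picking a cyclic subgroup $H \subset G$, lifting characters from $G/H$ and extending characters from $H$ to $G$, but this route is messier and the direct-sum reduction is cleaner.
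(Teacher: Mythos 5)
Your proof is correct and follows exactly the route the paper takes: the paper's proof is precisely the sequence of exercises (cyclic case, compatibility of duality with products, structure theorem for finite abelian groups), and you have simply filled in the details. Your remark that the isomorphism is non-canonical, unlike $G\cong\hatt{G}$, matches the comment the paper makes immediately after the proposition.
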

\begin{proof}
  The proof is a sequence of exercises:
  \begin{xca}
    Show that the Proposition is true for a finite cyclic group $\Z/n\Z$.
  \end{xca}
  \begin{xca}
    If $G_1$ and $G_2$ are abelian groups, show that
    \begin{equation*}
      \widehat{G_1\times G_2}\cong\hat{G}_1\times \hat{G}_2.
    \end{equation*}
  \end{xca}
  \begin{xca}
    Show that every finite abelian group is isomorphic to a product of finite cyclic groups.
  \end{xca}
\end{proof}
It follows from the above proposition that $\hat{\hat{G}}\cong G$.
However, in this case, there is a \emph{canonical} isomorphism $G\to \hat{\hat{G}}$ given by $g\mapsto \check{g}$ where $\check{g}$ is defined by
\begin{equation*}
  \check{g}(\chi)=\chi(g)\mbox{ for each }\chi \in \hat{G}.
\end{equation*}
\section{Induced Representations}
\label{sec:induced}
Let $H$ be a subgroup of $G$.
Given a representation $(\pi,V)$ of $H$, the representation of $G$ \emph{induced}\index{myindex}{induction}\index{myindex}{representation!induced} from $\pi$ is the representation $(\pi^G,V^G)$ where
\begin{equation*}
  V^G=\{f:G\to V|f(hg)=\pi(h)f(g) \mbox{ for all } h\in H, g\in G\}.
\end{equation*}
The action of $G$ on such functions is by \emph{right translation}
\begin{equation*}
  (\pi^G(g)f)(x)=f(xg).
\end{equation*}

Now suppose that $(\tau,U)$ is a representation of $G$ and $(\pi,V)$ is a representation of $H$.
Because $H\subset G$, we can regard $U$ as a representation of $H$ by restricting the homomorphism $G\to GL(U)$ to $H$.
Denote this representation by $\tau_H$.
  Given $\phi\in \Hom_G(\tau,\pi^G)$, define $\tilde \phi:U\to V$ by
  \begin{equation*}
    \tilde{\phi}(u)=\phi(u)(1)\mbox{ for each } u\in U.
  \end{equation*}
  \begin{xca}
    Show that $\tilde\phi\in \Hom_H(\tau_H,\pi)$.
  \end{xca}
\begin{theorem*}[\hypertarget{Frobenius-reciprocity}{Frobenius reciprocity}]
  The map $\phi\mapsto \tilde{\phi}$ induces an isomorphism
  \begin{equation*}
    \Hom_G(\tau,\pi^G)\tilde{\to}\Hom_H(\tau_H,\pi).
  \end{equation*}
\end{theorem*}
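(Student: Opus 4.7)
The plan is to prove the theorem by exhibiting an explicit inverse to the map $\phi\mapsto\tilde\phi$. The key observation driving everything is that if $\phi\in\Hom_G(\tau,\pi^G)$, then $G$\dash equivariance forces
\begin{equation*}
  \phi(u)(g) = (\pi^G(g)\phi(u))(1) = \phi(\tau(g)u)(1) = \tilde\phi(\tau(g)u),
\end{equation*}
so $\phi$ is completely determined by $\tilde\phi$. This identity is the engine of the proof and immediately yields injectivity: if $\tilde\phi=0$, then $\phi(u)(g)=0$ for every $u\in U$ and $g\in G$, hence $\phi=0$.

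For surjectivity, I would turn the displayed formula into a definition. Given $\psi\in\Hom_H(\tau_H,\pi)$, set
\begin{equation*}
  \phi(u)(g) = \psi(\tau(g)u)\quad\text{for } u\in U,\ g\in G,
\end{equation*}
and verify three things. First, $\phi(u)$ lies in $V^G$: for $h\in H$, using that $\psi$ intertwines $\tau_H$ with $\pi$, we get $\phi(u)(hg)=\psi(\tau(h)\tau(g)u)=\pi(h)\psi(\tau(g)u)=\pi(h)\phi(u)(g)$. Second, $\phi$ is $G$\dash equivariant: $\phi(\tau(g)u)(x)=\psi(\tau(xg)u)=\phi(u)(xg)=(\pi^G(g)\phi(u))(x)$. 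Third, $\tilde\phi(u)=\phi(u)(1)=\psi(u)$, so $\phi\mapsto\tilde\phi$ recovers $\psi$.

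Linearity of $\phi\mapsto\tilde\phi$ is immediate from the definition, and linearity of the inverse $\psi\mapsto\phi$ is equally clear, so combining the two constructions gives the claimed isomorphism. There is no real obstacle here: the content is the reconstruction formula $\phi(u)(g)=\tilde\phi(\tau(g)u)$, and the rest is a routine verification that the appropriate equivariance properties carry through. The only thing one must be slightly careful about is keeping the left/right conventions consistent — the fact that $H$ acts on the left in the condition $f(hg)=\pi(h)f(g)$ and that $G$ acts by right translation is exactly what makes the computation in the first check above go through with $\pi(h)$ coming out in front.
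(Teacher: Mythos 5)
Your proof is correct and follows essentially the same route as the paper: both construct the inverse map $\psi\mapsto\bigl(u\mapsto(g\mapsto\psi(\tau(g)u))\bigr)$ and verify the required equivariance properties. Your reconstruction identity $\phi(u)(g)=\tilde\phi(\tau(g)u)$ is precisely the content of the paper's check that the two maps are mutual inverses, so the arguments match.
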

\begin{proof}
  For $\psi\in \Hom_H(\tau_H,\pi)$ define $\tilde{\psi}:U\to V^G$ by
  \begin{equation*}
    \tilde{\psi}(u)(x)=\psi(\tau(x)u) \mbox{ for each } u\in U \mbox{ and } x\in G.
  \end{equation*}
  \begin{xca}
    For all $h\in H$, $\tilde{\psi}(u)(hx)=\pi(h)\tilde{\psi}(u)(x)$.
    Therefore, $\tilde{\psi}(u)\in V^G$.
  \end{xca}
  \begin{xca}
    Show that $\tilde \psi\in \Hom_G(\tau,\pi^G)$.
  \end{xca}
  \begin{xca}
    For all $\phi\in \Hom_G(\tau,\pi^G)$, $\tilde{\tilde{\phi}}=\phi$, and for all $\psi \in \Hom_H(\tau_H,\pi)$, $\tilde{\tilde{\psi}}=\psi$.
  \end{xca}
  Therefore the maps $\phi\mapsto\tilde{\phi}$ and $\psi\mapsto \tilde{\psi}$ are mutual inverses.
\end{proof}

\section{Description of intertwiners}
\label{sec:Mackey}
In this section we describe the homomorphisms between two induced representations.
Let $G$ be a finite group.
Let $H_1$ and $H_2$ be subgroups.
Let $(\pi_1,V_1)$ and $(\pi_2,V_2)$ be representations of $H_1$ and $H_2$ respectively.
For $f:G\to V_1$, and $\Delta:G\to \Hom_\C(V_1,V_2)$, define a convolution $\Delta*f:G\to V_2$ by
\begin{equation*}
  (\Delta*f)(x)=\frac{1}{|G|}\sum_{g\in G}\Delta(xg\inv)f(g).
\end{equation*}
Let $D$ be the set of all functions $\Delta:G\to \Hom_\C(V_1,V_2)$ satisfying
\begin{equation*}
  \Delta(h_2gh_1)=\pi_2(h_2)\circ \Delta(g)\circ \pi_1(h_1)
\end{equation*}
for all $h_1\in H_1$, $h_2\in H_2$ and $g\in G$.
\begin{xca}
  Show that if $\Delta\in D$ and $f_1\in V_1^G$ then $\Delta*f_1\in V_2^G$.
\end{xca}
\begin{xca}
  Show that the map $L_\Delta:V_1^G\to V_2^G$ defined by $f_1\mapsto \Delta*f_1$ is a homomorphism of $G$-modules.
\end{xca}
\begin{theorem}[Mackey]
  \label{theorem:Mackey}
  The map $\Delta\mapsto L_\Delta$ is an isomorphism from $D\to \Hom_G(V_1^G,V_2^G)$.
\end{theorem}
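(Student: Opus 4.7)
The plan is to construct a two-sided inverse by isolating a convenient family of generators of $V_1^G$. For each $v \in V_1$, let $\phi_v \in V_1^G$ denote the function supported on $H_1$ with $\phi_v(h) = \pi_1(h) v$ for $h \in H_1$. Any $f \in V_1^G$ is determined by its values on a set of right-coset representatives for $H_1 \bsl G$, and $\pi_1^G(g)\phi_v$ is supported on the single coset $H_1 g\inv$; therefore the $G$-translates of the $\phi_v$, as $v$ varies over $V_1$, span $V_1^G$. Both directions of the theorem will reduce to testing on these functions.

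First I would evaluate $L_\Delta \phi_v$ directly. The convolution sum $\sum_{g \in G}\Delta(xg\inv)\phi_v(g)$ collapses to $g \in H_1$, and then the $H_1$-equivariance $\Delta(xh\inv) = \Delta(x)\pi_1(h\inv)$ guaranteed by membership in $D$ turns each summand into $\Delta(x) v$. The upshot is
\begin{equation*}
(L_\Delta \phi_v)(x) = \frac{|H_1|}{|G|}\Delta(x) v,
\end{equation*}
which is the heart of the argument. It immediately shows that $\Delta \mapsto L_\Delta$ is injective: if $L_\Delta = 0$, the right side vanishes for every $x$ and $v$, forcing $\Delta = 0$.

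For surjectivity, given $L \in \Hom_G(V_1^G, V_2^G)$ the same identity dictates the definition $\Delta(x) v := (|G|/|H_1|)(L\phi_v)(x)$. The $H_2$-equivariance $\Delta(h_2 x) = \pi_2(h_2)\Delta(x)$ is inherited from $L\phi_v \in V_2^G$. The $H_1$-equivariance $\Delta(xh_1) = \Delta(x)\pi_1(h_1)$ follows from the easy identity $\pi_1^G(h_1)\phi_v = \phi_{\pi_1(h_1)v}$ (check directly on $H_1$) together with the $G$-intertwining of $L$, which converts $(L\phi_v)(xh_1) = (\pi_2^G(h_1) L\phi_v)(x)$ into $(L\phi_{\pi_1(h_1)v})(x)$. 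Finally, $L = L_\Delta$ holds because both are $G$-equivariant and agree on each $\phi_v$ by construction, hence agree on all $G$-translates of the $\phi_v$, which span $V_1^G$. The main conceptual point, and what I would expect to be most easily overlooked, is recognising $\{\phi_v : v \in V_1\}$ as simultaneously a $G$-generating set for $V_1^G$ and the test family on which the convolution collapses; once that is in place, the remaining work is routine equivariance bookkeeping, and no dimension count or Schur-type input is required.
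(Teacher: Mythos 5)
Your proof is correct and is essentially the paper's own argument: your functions $\phi_v$ are the paper's $f_{1,v}$, their $G$-translates are exactly the paper's $f_{g,v}$, and your key identity $(L_\Delta\phi_v)(x)=\tfrac{|H_1|}{|G|}\Delta(x)v$ is the same collapse the paper records as $\Delta(g)(v)=[G:H_1]L_\Delta(f_{g\inv,v})(1)$. The only difference is presentational: you spell out the equivariance checks and the spanning argument that the paper leaves as exercises.
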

\begin{proof}
  We construct an inverse mapping $\Hom_G(V_1^G,V_2^G)\to D$.
  For this, let us define a collection $f_{g,v}$ of elements in $V_1^G$ indexed by $g\in G$ and $v\in V_1$:
  \begin{equation*}
    f_{g,v}(x)=
    \begin{cases}
      \pi_1(h)v & \mbox{ if } x=hg, h\in H_1\\
      0 & \mbox{ if } x\notin H_1g.
    \end{cases}
  \end{equation*}
  \begin{xca}
    Show that for every $v\in V_1$, we have
    \begin{equation*}
      \Delta(g)(v)=[G:H_1]L_\Delta(f_{g\inv, v})(1).
    \end{equation*}
  \end{xca}
  The above equation can be turned around to define, for each $L:\Hom_G(V_1^G,V_2^G)$ a function $\Delta\in D$.
  \begin{xca}
    Show that if $L\in \Hom_G(V_1,V_2)$, then the function $\Delta:G\to \Hom_C(V_1,V_2)$ defined by
    \begin{equation*}
      \Delta_L(g)(v)=[G:H_1]L(f_{g\inv, v})(1)
    \end{equation*}
    is in $D$.
  \end{xca}
  \begin{xca}
    Check that the maps $\Delta\mapsto \Delta_L$ and $L\mapsto L_\Delta$ are inverses of each other.
  \end{xca}
\end{proof}
\section{A criterion for irreducibility}
Let $G$ be a finite group, $H$ a subgroup and $(\pi,V)$ a representation of $H$.
The space $V^G$ can be decomposed into a direct sum
\begin{equation*}
  V^G = \bigoplus_{Hx\inv H\in H\bsl G/H} V_{Hx\inv H},
\end{equation*}
where $V_{Hx\inv H}$ consists of functions $G\to V$ supported on $Hx\inv H$:
\begin{equation*}
  V_{Hx\inv H}=\{f:Hx\inv H\to V\;|\:f(hx\inv h')=\pi(h)f(x\inv h') \text{ for all } h,h'\in H\}.
\end{equation*}
$V_{Hx\inv H}$ is stable under the action of $\pi$.
Let $\pi_{Hx\inv H}$ denote the resulting representation of $H$ on $V_{Hx\inv H}$ and let ${}^x\pi_{H\cap x\inv Hx}$ denote the representation of $H\cap xHx\inv$ on $V$ given by ${}^x\pi(h)=\pi(x\inv hx)$.
\begin{xca}
  Show that $f\mapsto (h\mapsto f(x\inv h))$ defines an isomorphism of representations
  \begin{equation*}
    \pi_{Hx\inv H}\cong ({}^x \pi_{H\cap xHx\inv})^H.
  \end{equation*}
\end{xca}
We have proved
\begin{prop}
  \label{prop:indes}
  Let $G$ be a finite group and $H$ any subgroup.
  For every representation $\pi$ of $H$, there is a canonical isomorphism of representations of $H$
  \begin{equation*}
    (\pi^G)_H=\bigoplus_{Hx\inv H\in H\bsl G/H} ({}^x \pi_{H\cap xHx\inv})^H.
  \end{equation*}
\end{prop}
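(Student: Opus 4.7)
The plan is to assemble the three pieces that have been laid out just above the proposition: the double-coset decomposition $V^G = \bigoplus_{Hx\inv H} V_{Hx\inv H}$, the $H$-stability of this decomposition under right translation, and the preceding exercise identifying each summand with an induced representation. Together these yield the stated isomorphism of $H$-modules.

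First I would justify the direct sum decomposition. Every $f\in V^G$ is the unique sum $\sum_D f|_D$ over double cosets $D=Hx\inv H$, where $f|_D$ denotes $f$ restricted to $D$ and extended by zero elsewhere. Because each $D$ is a union of left $H$-cosets, the equivariance $f(hg)=\pi(h)f(g)$ restricts coherently to each piece, so $f|_D\in V_D$. Next, right translation by $h\in H$ preserves $V_D$: if $f$ is supported on $D$, then $(\pi^G(h)f)(y)=f(yh)$ vanishes unless $y\in Dh\inv = D$. This yields $(\pi^G)_H = \bigoplus_D \pi_D$ as $H$-modules, and the decomposition is canonical because the indexing is by the intrinsic set of double cosets.

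Finally, summand by summand I would invoke the preceding exercise $\pi_{Hx\inv H} \cong ({}^x\pi_{H\cap xHx\inv})^H$ to complete the identification. The main substantive point, and the only step requiring real care, is already packaged into that exercise: one must check that the map $f\mapsto (h\mapsto f(x\inv h))$ produces a function on $H$ satisfying the twisted left equivariance of the target induced representation. For $h'\in H\cap xHx\inv$, writing $x\inv h'h = (x\inv h'x)(x\inv h)$ and using the original $\pi$-equivariance of $f$ yields $f(x\inv h'h)=\pi(x\inv h'x)f(x\inv h) = {}^x\pi(h')f(x\inv h)$, which is exactly the law required by $({}^x\pi_{H\cap xHx\inv})^H$. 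This is the step where the twist ${}^x\pi$ emerges naturally from the bare conjugation $x\inv h'x$, and it is the genuine content of the proposition; once it is verified, the result follows by direct summation over double cosets.
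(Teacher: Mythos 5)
Your proposal is correct and follows exactly the route the paper takes: decompose $V^G$ by support on double cosets, observe that right translation by $H$ preserves each summand, and identify each summand with $({}^x\pi_{H\cap xHx\inv})^H$ via $f\mapsto(h\mapsto f(x\inv h))$, whose equivariance check you carry out correctly (the paper leaves it as an exercise). Nothing is missing.
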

By \hyperlink{Frobenius-reciprocity}{Frobenius reciprocity},
\begin{eqnarray*}
  \End_G(\pi^G)&=&\Hom_H((\pi^G)_H,\pi)\\
  & = & \bigoplus_{Hx\inv H\in H\bsl G/H} \Hom_H(({}^x\pi_{H\cap xHx\inv})^H,\pi).
\end{eqnarray*}
Recall that $\pi^G$ is irreducible if and only if $\End_G(\pi^G)$ is one dimensional.
As a result, we obtain \emph{Mackey's irreducibility criterion}\index{myindex}{Mackey's irreducibility criterion}:
\begin{theorem*}
  [Mackey's irreducibility criterion]
  Let $G$ be a finite group and $H$ a subgroup.
  Let $\pi$ be an irreducible representation of $H$.
  Then $\pi^G$ is irreducible if and only if, for any $x\notin H$, the representations $\pi$ and $({}^x\pi_{H\cap xHx\inv})^H$ are disjoint.
\end{theorem*}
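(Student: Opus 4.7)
The plan is to derive the criterion directly from the decomposition of $\End_G(\pi^G)$ established just before the statement, together with the recalled fact that $\pi^G$ is irreducible iff $\End_G(\pi^G)$ is one-dimensional. The whole argument should read off that direct-sum formula summand by summand.

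First I would isolate the contribution of the trivial double coset $H\cdot 1\cdot H = H$. Choosing the representative $x = 1$ gives $H\cap xHx\inv = H$ and ${}^x\pi = \pi$, and induction from $H$ to itself is the identity operation on representations: the condition $f(hg)=\pi(h)f(g)$ for all $g\in H$ forces $f(g)=\pi(g)f(1)$, so $V^H\cong V$ as an $H$-module via $f\mapsto f(1)$. Hence this summand is $\Hom_H(\pi,\pi)$, which is one-dimensional by Schur's lemma since $\pi$ is irreducible.

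Then I would observe that every other summand, indexed by a double coset $Hx\inv H$ with $x\notin H$, has nonnegative dimension. Therefore $\dim\End_G(\pi^G) = 1$ iff each such summand vanishes; by definition this is precisely the disjointness of $\pi$ and $({}^x\pi_{H\cap xHx\inv})^H$ for all $x\notin H$, so both directions of the equivalence fall out simultaneously. The main obstacle is essentially nonexistent: Proposition \ref{prop:indes} and Frobenius reciprocity have already done the work, and the only subtle point is the invocation of Schur's lemma to pin down the trivial summand as exactly one-dimensional (plus the minor coherence observation that the condition in the statement, quantified over all $x\notin H$, is stable under the freedom to replace $x$ by any double-coset representative, since $({}^{h_2xh_1}\pi_{H\cap (h_2xh_1)H(h_2xh_1)\inv})^H \cong ({}^x\pi_{H\cap xHx\inv})^H$ for $h_1,h_2\in H$).
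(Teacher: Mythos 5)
Your proposal is correct and follows exactly the route the paper intends: read off the direct-sum decomposition of $\End_G(\pi^G)$ obtained from Proposition~\ref{prop:indes} and Frobenius reciprocity, identify the summand for the trivial double coset as $\Hom_H(\pi,\pi)\cong\C$ via Schur's lemma, and note that one-dimensionality of $\End_G(\pi^G)$ is then equivalent to the vanishing of all the other summands, i.e.\ to the stated disjointness. The paper leaves these details implicit (``as a result, we obtain\dots''), and you have simply filled them in faithfully.
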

\begin{cor}
  \label{cor:Mackeycor}
  Suppose that $G$ is a finite group and $H$ a normal subgroup.
  Then for any irreducible representation $\pi$ of $H$, $\pi^G$ is irreducible if and only if for every $x\notin H$, ${}^x\pi$ is not isomorphic to $\pi$.
\end{cor}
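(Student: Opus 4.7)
The plan is to specialize Mackey's irreducibility criterion (just established) to the normal-subgroup setting and verify that each of its ingredients simplifies in the obvious way.

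First I would observe that since $H$ is normal in $G$, we have $xHx\inv = H$ for every $x\in G$, so in particular $H\cap xHx\inv = H$. Consequently the representation ${}^x\pi_{H\cap xHx\inv}$ appearing in Mackey's criterion is simply ${}^x\pi$ viewed as a representation of $H$ itself (the conjugation map $h\mapsto x\inv hx$ is an automorphism of $H$ because $H$ is normal, so ${}^x\pi$ is indeed a genuine representation of $H$). Inducing a representation of $H$ from $H$ to $H$ changes nothing, so $({}^x\pi_{H\cap xHx\inv})^H = {}^x\pi$.

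Next I would note that the same normality makes every double coset $Hx\inv H$ collapse to a single coset $x\inv H$, so the indexing set $H\bsl G/H$ becomes $G/H$; in particular, the condition ``$x\notin H$'' in Mackey's criterion is the natural one.

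Finally, I would check that ${}^x\pi$ is irreducible: since $h\mapsto x\inv hx$ is a group automorphism of $H$, it sends any proper $H$-stable subspace of $V$ to another proper $H$-stable subspace, so irreducibility of $\pi$ implies irreducibility of ${}^x\pi$. For two irreducible representations of a finite group, being disjoint (i.e.\ $\Hom_H({}^x\pi,\pi)=0$) is equivalent, by Schur's lemma, to being non-isomorphic. Plugging all of this into Mackey's criterion yields exactly the statement of the corollary. The only step requiring any real care is the reduction $({}^x\pi_{H\cap xHx\inv})^H = {}^x\pi$, but this is immediate once normality is invoked, so no real obstacle arises.
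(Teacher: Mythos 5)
Your argument is correct and is exactly the intended specialization: the paper states the corollary as an immediate consequence of Mackey's irreducibility criterion, and your reduction ($H\cap xHx\inv=H$ by normality, induction from $H$ to $H$ is trivial, and disjointness of irreducibles equals non-isomorphism by Schur's lemma) is the standard way to make that implication explicit. No issues.
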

\section{The little groups method}
The little groups method was first used by Wigner \cite{MR1503456}, and generalized by Mackey \cite{MR0098328} to construct representations of a group from those of a normal subgroup.
We will restrict ourselves to the case where $G$ is a finite group and $N$ is a normal subgroup of $G$ which is abelian.
Let $\hat N$ denote the Pontryagin dual of $N$ (Section~\ref{sec:pontryagin_dual}).
Define an action of $G$ on $\hat N$ by
\begin{equation*}
  {}^g\chi(n)=\chi(g\inv ng) \text{ for each } g\in G,\; \chi\in \hat N.
\end{equation*}
Let $\rho$ be an irreducible representation of $G$ on the vector space $V_\rho$.
For each $\chi \in \hat N$, write
\begin{equation*}
  V_\chi = \{ \vec x\in V\;|\: \rho(n)\vec x = \chi(n) \vec x\}.
\end{equation*}
Then
\begin{equation*}
  V_\rho = \bigoplus_{\chi \in \hat N} V_\chi.
\end{equation*}
Define
\begin{equation*}
  \hat N (\rho ) = \{ \chi\in \hat N\;|\: V_\chi \neq 0\}.
\end{equation*}
\begin{prop}
  [Clifford's theorem]
  \label{prop:Clifford}
  $\hat N(\rho)$ consists of a single $G$\dash orbit of $\hat N$.
\end{prop}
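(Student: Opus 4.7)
The plan is to exploit the decomposition $V_\rho = \bigoplus_{\chi\in \hat N} V_\chi$ by showing that $G$ permutes the weight spaces $V_\chi$ according to its action on $\hat N$, so that the sum of weight spaces over a single $G$-orbit is a $G$-subrepresentation, whence irreducibility forces $\hat N(\rho)$ to be exactly one orbit.

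Concretely, the first step is to establish the intertwining relation $\rho(g) V_\chi = V_{{}^g\chi}$ for every $g\in G$ and $\chi\in \hat N$. Given $\vec x\in V_\chi$, one computes for $n\in N$
\begin{equation*}
  \rho(n)\rho(g)\vec x = \rho(g)\rho(g\inv n g)\vec x = \chi(g\inv n g)\rho(g)\vec x = {}^g\chi(n)\,\rho(g)\vec x,
\end{equation*}
using that $N$ is normal to write $ng = g(g\inv n g)$. Hence $\rho(g)V_\chi \subseteq V_{{}^g\chi}$, and equality follows by applying the same computation to $g\inv$.

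The second step is to pick any $\chi_0\in \hat N(\rho)$, let $\mathcal O = G\cdot \chi_0$ be its $G$-orbit, and set
\begin{equation*}
  W = \bigoplus_{\chi\in \mathcal O} V_\chi \subseteq V_\rho.
\end{equation*}
By construction $W$ is stable under $N$ (each summand is), and by the first step $\rho(g)$ permutes the summands of $W$ among themselves, so $W$ is also stable under $G$. Moreover $W\supseteq V_{\chi_0}\neq 0$. Since $\rho$ is irreducible we must have $W = V_\rho$, which forces $V_\chi = 0$ for every $\chi\notin \mathcal O$, i.e.\ $\hat N(\rho) = \mathcal O$.

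There is no real obstacle here; the only point requiring a moment's care is to get the direction of the action on characters consistent with the definition ${}^g\chi(n) = \chi(g\inv n g)$ given in the text, which is precisely why the conjugation $g\inv n g$ appears naturally when one rewrites $\rho(n)\rho(g) = \rho(g)\rho(g\inv n g)$.
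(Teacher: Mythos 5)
Your proof is correct and follows essentially the same route as the paper: establish the permutation relation $\rho(g)V_\chi = V_{{}^g\chi}$ from the normality of $N$, then observe that the sum of weight spaces over a single $G$-orbit is a nonzero $G$-invariant subspace, which by irreducibility must be all of $V_\rho$. Your write-up is slightly more explicit (noting that the reverse inclusion follows by applying the computation to $g^{-1}$), but the argument is the same.
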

\begin{proof}
  Suppose $\vec x \in V_\chi$, and $g\in G$.
  Then
  \begin{eqnarray*}
    \rho(n)(\rho(g)\vec x) & = & \rho(g)\rho(g\inv ng)\vec x\\
    & = & {}^g\chi \rho(g)\vec x.
  \end{eqnarray*}
  Therefore, 
  \begin{equation}
    \label{eq:system-of-imprimitivity}
    \rho(g)V_\chi= V_{{}^g\chi}.
  \end{equation}
  It follows that $\oplus_{g\in G} V_{{}^g\chi}$ is invariant under $\rho$.
  From the irreducibility of $\rho$ one concludes that if $V_\chi\neq 0$, then
  $\oplus_{g\in G} V_{{}^g\chi}=V_\rho$.
\end{proof}
For $\chi\in \hat N(\rho)$, let 
\begin{equation*}
  G_\chi=\{g\in G\;|\:{}^g\chi = \chi\}.
\end{equation*}
It follows from (\ref{eq:system-of-imprimitivity}) that for every $g\in G_\chi$, $\rho(g)$ preserves $V_\chi$.
Therefore, $\rho$ gives rise to a representation $\rho_\chi$ of $G_\chi$ on $V_\chi$.
\begin{prop}
  [Mackey's imprimitivity theorem]
  \index{myindex}{Mackey's imprimitivity theorem}
  \label{prop:Mackey-imprimitivity}
  \begin{equation*}
    \rho \cong \rho_\chi^G.
  \end{equation*}
\end{prop}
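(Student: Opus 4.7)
The plan is to construct, via Frobenius reciprocity, a canonical $G$-equivariant map $\tilde{P}:V_\rho\to V_\chi^G$, observe that it is nonzero and hence injective by Schur's lemma, and then compare dimensions to conclude it is an isomorphism.

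First, I would use Clifford's theorem together with (\ref{eq:system-of-imprimitivity}) to write
\begin{equation*}
  V_\rho=\bigoplus_{\chi'\in G\cdot\chi}V_{\chi'},
\end{equation*}
and consider the projection $P:V_\rho\to V_\chi$ along this decomposition. The crucial observation is that $P$ is $G_\chi$-equivariant: for $g\in G_\chi$ we have ${}^{g\inv}\chi=\chi$, and writing $v=\sum_{\chi'}v_{\chi'}$ together with $\rho(g)V_{\chi'}=V_{{}^g\chi'}$, the $V_\chi$-component of $\rho(g)v$ is exactly $\rho(g)v_\chi=\rho(g)P(v)$.

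Next, applying Frobenius reciprocity with $H=G_\chi$, $\tau=\rho$, and $\pi=\rho_\chi$, the element $P\in\Hom_{G_\chi}(\rho|_{G_\chi},\rho_\chi)$ corresponds to a $G$-equivariant map
\begin{equation*}
  \tilde{P}:V_\rho\longrightarrow V_\chi^G,\qquad \tilde{P}(v)(x)=P(\rho(x)v).
\end{equation*}
For any nonzero $v\in V_\chi$, $\tilde{P}(v)(1)=P(v)=v\neq 0$, so $\tilde{P}$ is nonzero; since $\rho$ is irreducible, its kernel must vanish and $\tilde{P}$ is injective.

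Finally, (\ref{eq:system-of-imprimitivity}) shows that each $\rho(g):V_\chi\to V_{{}^g\chi}$ is a linear isomorphism, so every summand $V_{\chi'}$ with $\chi'\in G\cdot\chi$ has dimension $\dim V_\chi$. Since $|G\cdot\chi|=[G:G_\chi]$, this gives $\dim V_\rho=[G:G_\chi]\dim V_\chi=\dim V_\chi^G$, and the injection $\tilde{P}$ is forced to be an isomorphism. The main point requiring care is the $G_\chi$-equivariance of $P$, which is where Clifford's theorem really does its work; everything else follows from Schur's lemma and a dimension count.
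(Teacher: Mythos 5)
Your argument is correct, and the map you produce is in fact identical to the one in the paper: the paper defines $\phi(\vec x)(g)=\rho(g)\vec x_{g\inv G_\chi}$ directly, and since the $V_\chi$\dash component of $\rho(x)v$ is precisely $\rho(x)v_{x\inv G_\chi}$, your $\tilde P(v)(x)=P(\rho(x)v)$ is the same function. The difference is purely in how the verification is organized. The paper writes the formula down and leaves ``well-defined isomorphism of representations'' as an exercise; you instead observe that the projection $P$ onto $V_\chi$ is $G_\chi$\dash equivariant (which, as you say, is exactly where Clifford's theorem and \eqref{eq:system-of-imprimitivity} enter), let Frobenius reciprocity hand you the $G$\dash equivariant $\tilde P$ for free, and then get bijectivity from irreducibility plus the dimension count $\dim V_\rho=[G:G_\chi]\dim V_\chi=\dim V_\chi^G$. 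This buys you a proof with no unchecked computations, at the cost of invoking Frobenius reciprocity and Schur's lemma where the paper's intended exercise is a direct (and also instructive) verification that $\phi$ intertwines the two actions and is bijective. One small point worth noting either way: the displayed description of $V_\chi^G$ in the paper has functions valued in $\C$, but $\rho_\chi$ acts on $V_\chi$, which need not be one-dimensional; your use of the general definition of the induced space is the right reading.
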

\begin{proof}
  \begin{equation*}
    V_\rho =\bigoplus_{gG_\chi\in G/G_\chi} V_{{}^g\chi}.
  \end{equation*}
  Therefore, for each $\vec x\in V_\rho$, there is a unique decomposition
  \begin{equation*}
    \vec x = \sum_{G_\chi g\in G_\chi\bsl G} \vec x_{gG_\chi}.
  \end{equation*}
  By (\ref{eq:system-of-imprimitivity}), $\rho(g\inv)\vec x_{gG_\chi}\in V_\chi$.
  The representation space of $\rho_\chi^G$ is
  \begin{equation*}
    V_\chi^G=\{f:G\to \C\;|\:f(g'g)=\chi(g')f(g)\text{ for all } g'\in G_\chi \; g\in G\}.
  \end{equation*}
  Define $\phi(\vec x)(g)=\rho(g)\vec x_{g\inv G_\chi}$ for each $g\in G$.
  \begin{xca}
    Show that $\phi:V_\rho\to \rho_\chi^G$ is a well defined isomorphism of representations of $G$.
  \end{xca}
\end{proof}

\chapter{Representations constructed by\\ { }parabolic induction}
\label{chap:induced_representations}
\section{Conjugacy classes in $GL_2(\Fq)$}
\label{sec:conjugacy_classes}

Given a matrix $\mat{a}{b}{c}{d}$ in $GL_2(\Fq)$ consider its characteristic polynomial
\begin{equation*}
  \lambda^2-(a+d)\lambda+(ad-bc).
\end{equation*}
\begin{xca}
  If the roots $(\lambda_1,\lambda_2)$ are distinct in $\Fq$ then the matrix is conjugate to $\mat{\lambda_1}{0}{0}{\lambda_2}$ and to $\mat{\lambda_2}{0}{0}{\lambda_1}$.  
\end{xca}
\begin{xca}
  If $\lambda_1=\lambda_2$ then, either the matrix is $\mat{\lambda_1}{0}{0}{\lambda_1}$ or it is conjugate to $\mat{\lambda_1}{1}{0}{\lambda_1}$ in $GL_2(\Fq)$.
\end{xca}
\begin{xca}
  \label{ex:anisotropic}
  If $\lambda^2-(a+d)\lambda+(ad-bc)$ is irreducible in $\Fq[t]$, then the matrix is similar to $\mat 0{-(ad-bc)}1{a+d}$.
\end{xca}
To summarise,
the conjugacy classes in $GL_2(\Fq)$ are as follows:
\begin{enumerate}
\item $(q-1)$ classes represented by $\mat{\lambda}{0}{0}{\lambda}$, with $\lambda\in \Fq^*$.
\item $(q-1)$ classes represented by  $\mat{\lambda}{1}{0}{\lambda}$, with $\lambda\in \Fq^*$.
\item $\half(q-1)(q-2)$ classes represented by $\mat{\lambda_1}{0}{0}{\lambda_2}$ with $\lambda_1\neq \lambda_2$.
\item $\half(q^2-q)$ classes represented by $\mat 0{-a_0}1{a_1}$, with $\lambda^2-a_1\lambda+a_0$ an irreducible polynomial in $\Fq[t]$.
\end{enumerate}
In all, there are
\begin{equation}
  \label{eq:conjugacy_classes}
  (q-1)+(q-1)+\frac{q^2-q}{2}+\frac{(q-1)(q-2)}{2}
\end{equation}
conjugacy classes.
Detailed information about the conjugacy classes is collected in Table~\ref{table:GL2-conjugacy-classes}.
\newcommand{\cbox}[2]{\parbox{#1}{\begin{center}#2\end{center}}}
\begin{table}[htb]
    \caption{Conjugacy classes of $GL_2(\Fq)$}
    \label{table:GL2-conjugacy-classes}
    \begin{tabular}{|p{2.2cm}|c|c|c|c|}
      \hline
      Name & element & centraliser & \cbox{1.5cm}{no. of classes} & \cbox{1.5cm}{size of class} \\
      \hline \hline
      Central & $\mat a00a$, $a\in \Fq^*$ & $q(q-1)^2(q+1)$ & $q-1$ & $1$\\
      \hline
      Non-semisimple & $\mat a10a$, $a\in \Fq^*$ & $q(q-1)$ & $q-1$ & $(q-1)(q+1)$\\
      \hline
      Split regular semisimple &
      \cbox{2.4cm}{
        \begin{center}
          $\mat a00b$\\ $a\neq b\in \Fq^*$
        \end{center}
      }
      & $(q-1)^2$ & $\frac{(q-1)(q-2)}{2}$ & $q(q+1)$ \\
      \hline
      Anisotropic regular semisimple & \cbox{2.2cm}{$C_p$, $p(t)\in \Fq[t]$ quadratic, irreducible} & $(q-1)(q+1)$ & $\frac{q^2-q}{2}$ & $q(q-1)$ \\
      \hline
    \end{tabular}
\end{table}

\section{Subgroup of upper-triangular matrices}
\label{sec:upper-triangular}

Let $B$ be the subgroup of $GL_2(\Fq)$ consisting of invertible upper triangular matrices.
Let $N$ be the subgroup of upper triangular matrices with $1$'s along the diagonal.
Let $T$ be the subgroup of invertible diagonal matrices.
\begin{xca}
  \label{xca:subgr-upper-triang}
  Show that
  \begin{enumerate}
  \item Every element $b\in B$ can be written in a unique way as $b=tn$, with $t\in T$ and $n\in N$.
  \item $N$ is a normal subgroup of $B$.
  \item $B/N\cong T$.
  \end{enumerate}
\end{xca}

Let $w=\mat{0}{1}{-1}{0}$.
\begin{prop}[Bruhat decomposition]
  \label{prop:Bruhat_decomposition}
  \begin{equation*}
    GL_2(\Fq)=B\cup BwB, \mbox{ a disjoint union.}
  \end{equation*}
\end{prop}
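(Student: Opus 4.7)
The plan is to split a matrix $g = \mat{a}{b}{c}{d} \in GL_2(\Fq)$ according to whether its lower-left entry $c$ is zero, handle the two cases separately, and then dispose of disjointness by observing that $w \notin B$.

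First, suppose $c = 0$. Then $g$ is already upper triangular, and since $g$ is invertible, both diagonal entries are nonzero, so $g \in B$.

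Second, suppose $c \neq 0$. The idea is to row-reduce the $(1,1)$ entry to zero by left-multiplying by a unipotent upper-triangular matrix, and then to peel off a $w$ from the left, leaving an upper-triangular factor on the right. Concretely, I would verify by direct computation that
\begin{equation*}
  \mat{1}{-a/c}{0}{1} \mat{a}{b}{c}{d} = \mat{0}{-\det(g)/c}{c}{d},
\end{equation*}
and then that
\begin{equation*}
  \mat{0}{-\det(g)/c}{c}{d} = w \cdot \mat{-c}{-d}{0}{-\det(g)/c}.
\end{equation*}
Rearranging yields $g = \mat{1}{a/c}{0}{1} \cdot w \cdot \mat{-c}{-d}{0}{-\det(g)/c}$. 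Since $\det(g) \neq 0$ and $c \neq 0$, the right-most factor has nonzero diagonal entries and lies in $B$, while the leftmost factor is unipotent upper-triangular, so also in $B$. Hence $g \in BwB$.

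For disjointness, suppose some $g \in B \cap BwB$. Write $g = b = b_1 w b_2$ with $b, b_1, b_2 \in B$. Then $w = b_1^{-1} b\, b_2^{-1} \in B$, contradicting the fact that the $(2,1)$ entry of $w$ is $-1 \neq 0$. The main obstacle is really just bookkeeping in the explicit factorization of the $c \neq 0$ case; there is no conceptual difficulty, since the argument is essentially Gaussian elimination applied to a $2 \times 2$ matrix. One could alternatively phrase it via the transitive action of $B$ on $\mathbb{P}^1(\Fq) \setminus \{[e_1]\}$, but the direct matrix calculation seems more transparent here.
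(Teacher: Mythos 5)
Your proof is correct and takes essentially the same approach as the paper: both case-split on whether $c = 0$ and produce the same explicit factorization $g = \mat{1}{a/c}{0}{1}\, w\, \mat{-c}{-d}{0}{b - ad/c}$ when $c \neq 0$ (your $-\det(g)/c$ is exactly the paper's $b - ad/c$). The only difference is that you spell out disjointness explicitly via $w \notin B$, while the paper leaves it implicit in the observation that $g \in B$ iff $c = 0$.
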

Note that $B$ is really a double coset $B1B$.
So Proposition~\ref{prop:Bruhat_decomposition} really tells us that the double coset space $B\bsl GL_2(\Fq)/B$ has two elements and that $\{1,w\}$ is a complete set of representatives for these double cosets.
\begin{proof}
A matrix $\mat{a}{b}{c}{d}$ lies in $B$ if and only if $c=0$.
If $c\neq 0$, then
\begin{equation*}
  \mat{a}{b}{c}{d}=\mat{1}{a/c}{0}{1}w\mat{-c}{-d}{0}{b-ad/c}\in BwB.
\end{equation*}
\end{proof}
\section{Parabolically induced representations for $GL_2(\Fq)$}
\label{sec:induced_representations}
Given characters $\chi_1$ and $\chi_2$ of $\Fq^*$, we get a character $\chi$ of $T$ by
\begin{equation*}
  \chi \mat{y_1}{0}{0}{y_2}=\chi_1(y_1)\chi_2(y_2).
\end{equation*}
We extend $\chi$ to a character of $B$ by letting $N$ lie in the kernel.
Thus
\begin{equation}
  \label{eq:character}
  \chi\mat{y_1}{x}{0}{y_2}=\chi_1(y_1)\chi_2(y_2).
\end{equation}
Let $I(\chi_1,\chi_2)$ be the representation of $GL_2(\Fq)$ induced from this character of $B$.
\begin{prop}
  \label{prop:intertwiners}
  Let $\chi_1$, $\chi_2$, $\mu_1$ and $\mu_2$ be characters of $\Fq^*$. 
  Then
  \begin{equation*}
    \dim \Hom_{GL_2(\Fq)}(I(\chi_1,\chi_2),I(\mu_1,\mu_2))=e_1+e_w,
  \end{equation*}
  where,
  \begin{equation*}
    e_1=
    \begin{cases}
      1 & \mbox{ if } \chi_1 = \mu_1 \mbox{ and } \chi_2=\mu_2,\\
      0 & \mbox{ otherwise,}
    \end{cases}
  \end{equation*}
and
\begin{equation*}
    e_w=
    \begin{cases}
      1 & \mbox{ if } \chi_1 = \mu_2 \mbox{ and } \chi_2=\mu_1,\\
      0 & \mbox{ otherwise.}
    \end{cases}
\end{equation*}
\end{prop}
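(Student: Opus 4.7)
The plan is to compute this intertwiner space via Mackey's theorem (Theorem~\ref{theorem:Mackey}) applied with $H_1 = H_2 = B$, $V_1 = V_2 = \C$, $\pi_1 = \chi$ the character of $B$ defined from $(\chi_1,\chi_2)$ via~\eqref{eq:character}, and $\pi_2 = \mu$ the corresponding character from $(\mu_1,\mu_2)$. This identifies $\Hom_{GL_2(\Fq)}(I(\chi_1,\chi_2),I(\mu_1,\mu_2))$ with the space $D$ of scalar-valued functions $\Delta:GL_2(\Fq) \to \C$ obeying the biequivariance $\Delta(b_2gb_1) = \mu(b_2)\chi(b_1)\Delta(g)$. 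By the Bruhat decomposition (Proposition~\ref{prop:Bruhat_decomposition}), $GL_2(\Fq) = B \sqcup BwB$, so such a $\Delta$ is determined by the pair of numbers $(\Delta(1),\Delta(w))$, and the contributions of the two cells add independently. My aim is to show that $\Delta(1)$ can be chosen freely iff $e_1 = 1$ and $\Delta(w)$ can be chosen freely iff $e_w = 1$.

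For the identity cell, I would apply the transformation rule with $(b_2,b_1) = (b,1)$ and $(b_2,b_1)=(1,b)$ for arbitrary $b \in B$, obtaining $\mu(b)\Delta(1) = \Delta(b) = \chi(b)\Delta(1)$. Hence $\Delta(1)$ may be nonzero precisely when $\mu = \chi$ on $B$, that is $\chi_1 = \mu_1$ and $\chi_2 = \mu_2$, in which case $\Delta(1)$ is unconstrained; this cell therefore contributes exactly $e_1$ dimensions. For the big cell, the consistency requirement is that whenever $b_2 w b_1 = b_2' w b_1'$, the stabilizer element $t := b_2^{-1}b_2'$ lies in $B \cap wBw^{-1}$; a short matrix calculation (essentially $w\mat{a}{b}{0}{d}w\inv = \mat{d}{0}{-b}{a}$) shows that $wBw\inv$ is the opposite Borel, so $B \cap wBw\inv = T$. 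The compatibility then boils down to the requirement $\mu(t) = \chi(w\inv t w)$ for all $t \in T$. Setting $t = \mat{y_1}{0}{0}{y_2}$ one computes $w\inv t w = \mat{y_2}{0}{0}{y_1}$, so the condition reads $\mu_1(y_1)\mu_2(y_2) = \chi_1(y_2)\chi_2(y_1)$ for all $y_1,y_2 \in \Fq^*$, which holds iff $\chi_1 = \mu_2$ and $\chi_2 = \mu_1$. Hence this cell contributes exactly $e_w$ dimensions.

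Summing the two contributions from the disjoint Bruhat cells gives $\dim D = e_1 + e_w$. The only step requiring genuine care is the analysis of the stabilizer at $w$, namely the identification $B \cap wBw\inv = T$ together with the $w$-twisted action on $T$ swapping diagonal entries; everything else is bookkeeping once Mackey's theorem and the Bruhat decomposition are in hand.
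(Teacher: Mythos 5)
Your proposal is correct and follows essentially the same route as the paper: apply Mackey's description of intertwiners (Theorem~\ref{theorem:Mackey}) to reduce to the space of $(\mu,\chi)$\dash biequivariant scalar functions, use the Bruhat decomposition to see that such a function is determined by its values at $1$ and $w$, and then check that each value can be nonzero exactly under the conditions defining $e_1$ and $e_w$ via the computation $w\inv\mat{y_1}{0}{0}{y_2}w=\mat{y_2}{0}{0}{y_1}$. Your explicit verification that $B\cap wBw\inv=T$, which guarantees that $\Delta_w$ is well defined on the big cell, is a point the paper leaves implicit, but it is the same argument.
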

\begin{proof}
  Let $\chi$ and $\mu$ be the characters of $B$ obtained from the pairs $\chi_1$, $\chi_2$ and $\mu_1$, $\mu_2$ respectively as in \eqref{eq:character}.
  We regard $\chi$ and $\mu$ as one-dimensional representations of $B$ acting on the space $\C$.
  We may identify $\Hom_\C(\C,\C)$ with $\C$ as well.
  Then, using Mackey's description of intertwiners (Theorem~\ref{theorem:Mackey}), we see that we must compute the dimension of the space of functions $\Delta:GL_2(\Fq)\to \C$ such that
  \begin{equation}
    \label{eq:Mackey}
    \Delta(b_2gb_1)=\mu(b_2)\Delta(g)\chi(b_1), \mbox{ } b_i\in B.
  \end{equation}
  It follows from the Bruhat decomposition that $\Delta$ is completely determined by its values at $1$ and $w$.

  Taking $g=1$ in \eqref{eq:Mackey}, we see that for any $t\in T$,
  \begin{equation*}
    \mu(t)\Delta(1)=\Delta(t)=\Delta(1)\chi(t)
  \end{equation*}
  Therefore, if $\mu\neq \chi$ then $\Delta(1)=0$.
  On the other hand, if $\mu=\chi$, let $\Delta_1$ be the function such that
  \begin{equation*}
    \Delta_1(b)=\chi(b) \mbox{ for all } b\in B,
  \end{equation*}
  and whose restriction to $BsB$ is zero.
  If $e_1=0$, we take $\Delta_1\equiv 0$.

  Taking $g=w$ in \eqref{eq:Mackey}, we see that for any $t\in T$,
  \begin{equation*}
    \mu(t)\Delta(w)=\Delta(tw)=\Delta(w(w\inv t w))=\Delta(w)\chi(w\inv t w).
  \end{equation*}
  \begin{xca}
    $w\inv \mat{t_1}{0}{0}{t_2}w=\mat{t_2}{0}{0}{t_1}$.
  \end{xca}
  Therefore, if $\mu_1\neq \chi_2$ or $\mu_2\neq \chi_1$ then $\Delta(w)=0$.
  On the other hand, if $\mu_1=\chi_2$ and $\mu_2=\chi_1$, let $\Delta_w$ be the function such that
  \begin{equation*}
    \Delta_w(b_2wb_1)=\chi(b_1)\mu(b_2) \mbox{ for all } b_1,b_2\in B,
  \end{equation*}
  and whose restriction to $B$ is $0$.
  If $e_w=0$, we take $\Delta_w\equiv 0$.

  An arbitrary functions satisfying \eqref{eq:Mackey} can be expressed as a linear combination of $\Delta_1$ and $\Delta_w$, so we see that the dimension of the space of such functions must be $e_1+e_w$.
\end{proof}

\begin{theorem}
  \label{theorem:principal_series}
  Let $\chi_1$, $\chi_2$, $\mu_1$ and $\mu_2$ be characters of $\Fq^*$.
  Then $I(\chi_1,\chi_2)$ is an irreducible representation of degree $q+1$ of $GL_2(\Fq)$ unless
  $\chi_1=\chi_2$, in which case it is a direct sum of two irreducible representations having degrees $1$ and $q$.
  We have
  \begin{equation*}
    I(\chi_1,\chi_2)\cong I(\mu_1,\mu_2)
  \end{equation*}
  if and only if either
  \begin{equation}
    \label{eq:1}
    \chi_1=\mu_1 \mbox{ and } \chi_2=\mu_2
  \end{equation}
  or else
  \begin{equation}
    \label{eq:s}
    \chi_1=\mu_2 \mbox{ and } \chi_2=\mu_1.
  \end{equation}
\end{theorem}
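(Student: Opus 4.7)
The plan is to extract everything from Proposition~\ref{prop:intertwiners}, which already gives a complete description of $\dim \Hom_{GL_2(\Fq)}(I(\chi_1,\chi_2),I(\mu_1,\mu_2))$. Three things need to be established: the degree, the irreducibility (and the exceptional decomposition when $\chi_1=\chi_2$), and the isomorphism criterion.

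First I would compute the degree. Since $I(\chi_1,\chi_2)$ is induced from a one-dimensional representation of $B$, its degree equals $[GL_2(\Fq):B]$. Using $|GL_2(\Fq)|=q(q-1)^2(q+1)$ and $|B|=|T||N|=q(q-1)^2$ (from Exercise~\ref{xca:subgr-upper-triang}), we get the degree $q+1$. Alternatively, this index is exactly the number of Bruhat cells times one, i.e. $|B/B| + |BwB/B| = 1 + q$.

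Next I would invoke Proposition~\ref{prop:intertwiners} with $(\mu_1,\mu_2)=(\chi_1,\chi_2)$. The value $e_1$ is always $1$, while $e_w=1$ precisely when $\chi_1=\chi_2$. So $\dim\End_{GL_2(\Fq)}I(\chi_1,\chi_2)$ equals $1$ when $\chi_1\ne\chi_2$ and $2$ when $\chi_1=\chi_2$. Since the group algebra is semisimple, $\dim\End=1$ is equivalent to irreducibility, and $\dim\End=2$ means the representation is a direct sum of two inequivalent irreducibles. In the latter case I have to actually exhibit the decomposition: I would observe that the one-dimensional representation $\chi\circ\det$ of $GL_2(\Fq)$ restricts on $B$ to the character $\chi$ extended as in \eqref{eq:character} (since $\det\mat{y_1}{x}{0}{y_2}=y_1y_2$), so by Frobenius reciprocity $\Hom_{GL_2(\Fq)}(\chi\circ\det,I(\chi,\chi))=\Hom_B(\chi,\chi)=\C$. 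Thus $\chi\circ\det$ embeds in $I(\chi,\chi)$, and its complement is an irreducible of dimension $q+1-1=q$ (the Steinberg-type summand).

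Finally, for the isomorphism criterion I would again appeal directly to Proposition~\ref{prop:intertwiners}. The condition $I(\chi_1,\chi_2)\cong I(\mu_1,\mu_2)$ forces $\dim\Hom\ne 0$, hence either \eqref{eq:1} or \eqref{eq:s} must hold. Conversely, if \eqref{eq:1} holds the representations are literally equal, and if \eqref{eq:s} holds then $\dim\Hom\ge 1$; in the irreducible case any nonzero intertwiner is an isomorphism by Schur, and in the reducible case $\chi_1=\chi_2=\mu_1=\mu_2$ is already forced so the two inductions coincide. I expect no real obstacle here, as the whole theorem is essentially a bookkeeping consequence of Proposition~\ref{prop:intertwiners}; the one non-mechanical step is producing the explicit $\chi\circ\det$ subrepresentation in the reducible case, but that is immediate from Frobenius reciprocity.
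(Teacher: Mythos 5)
Your proposal is correct and follows essentially the same route as the paper: degree from $[GL_2(\Fq):B]=q+1$, endomorphism-algebra dimensions from Proposition~\ref{prop:intertwiners}, and the isomorphism criterion from the same proposition plus Schur's lemma. The only cosmetic difference is that you locate the one-dimensional constituent of $I(\chi,\chi)$ via Frobenius reciprocity applied to $\chi\circ\det$, whereas the paper exhibits the function $g\mapsto\chi(\det g)$ inside the induced space directly; these are the same observation.
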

\begin{proof}
  Apply Proposition~\ref{prop:intertwiners} with $\chi_1=\mu_1$ and $\chi_2=\mu_2$.
  We see that
  \begin{equation*}
    \dim \End_{GL_n(\Fq)}(I(\chi_1,\chi_2))=
    \begin{cases}
      1 & \mbox{ if } \chi_1\neq \chi_2,\\
      2 & \mbox{ if } \chi_1=\chi_2.
    \end{cases}
  \end{equation*}
  Recall that if $(\pi,V)$ is a representation of a finite group $G$ and $V$ is a direct sum of distinct irreducible representations $\pi_1,\cdots,\pi_h$ with multiplicities $m_1,\cdots,m_h$ and with degrees $d_1,\cdots,d_h$ respectively, then the dimension of $\End_G(V)$ is $\sum m_id_i^2$.
  Hence $I(\chi_1,\chi_2)$ is irreducible if $\chi_1\neq \chi_2$, otherwise it is a direct sum of two irreducible representations because $2=1^2+1^2$ is the only way of writing $2$ as a sum of non-zero multiples of more than one non-zero squares.

  Because the index of $B$ in $GL_2(\Fq)$ is $q+1$, the dimension of $I(\chi_1,\chi_2)$ is always $q+1$.
  If $\chi_1=\chi_2$, the representation of $GL_2(\Fq)$ generated by the function $f(g)=\chi_1(\det(g))$ clearly satisfies $f(bg)=\chi(b)f(g)$ for all $b\in B$ and $g\in G$.
  Therefore $f\in I(\chi_1,\chi_2)$.
  Moreover, $(g\cdot f)(x)=\chi_1(\det(g))f$.
  Therefore the one-dimensional subspace spanned by $f$ is invariant under the action of $G$, hence forms a one-dimensional representation of $G$.
  The other component is therefore $q$-dimensional.
  
  If $\chi_1\neq \chi_2$ then, $I(\chi_1,\chi_2)$ is irreducible.
  By Proposition~\ref{prop:intertwiners} there exists a non-zero element in $\Hom(I(\chi_1,\chi_2), I(\mu_1,\mu_2))$ if and only if $\chi_1=\mu_1$ and $\chi_2=\mu_2$ or $\chi_1=\mu_2$ and $\chi_2=\mu_1$.
  By irreducibility, these homomorphisms must be isomorphisms.
  This proves the second part of the theorem.
\end{proof}
\begin{xca}
  Find the isomorphism $I(\chi_1,\chi_2)\to I(\chi_2,\chi_1)$ explicitly, when $\chi_1\neq \chi_2$.
\end{xca}
To summarise, in this section, we have constructed irreducible representations of $GL_2(\Fq)$ corresponding to characters $\chi=(\chi_1,\chi_2)$ of $T$:
\begin{enumerate}
\item When $\chi_1\neq \chi_2$, there is a unique irreducible representation of $GL_2(\Fq)$ of degree $q+1$ corresponding to $\chi$; 
  the irreducible representation corresponding to $(\chi_1,\chi_2)$ is isomorphic to the one corresponding to $(\chi_2,\chi_1)$. 
  We have $\frac{1}{2}(q-1)(q-2)$ irreducible representations of degree $q+1$.
\item When $\chi_1=\chi_2$, there are two irreducible representations of $GL_2(\Fq)$ corresponding to $\chi$, one of degree $1$ and the other of degree $q$.
  All these representations are pairwise non-isomorphic.
  Therefore we have $q-1$ representations of degree $1$ and $q-1$ representations of degree $q$.
\end{enumerate}

Recall from Schur theory, that the number of irreducible representations is the same as the number of conjugacy classes in a group.
We have constructed
\begin{equation*}
  (q-1)+(q-1)+\frac{(q-1)(q-2)}{2}
\end{equation*}
irreducible representations so far.
Comparing with \eqref{eq:conjugacy_classes}, we see that there remain $\half(q^2-q)$ representations left to construct.

Recall that for a group of order $n$ whose irreducible representations are $\pi_1,\ldots, \pi_r$ of degrees $d_1,\ldots,d_r$ respectively,
\begin{equation*}
  n=d_1^2+\cdots + d_r^2.
\end{equation*}
\begin{xca}
  Show that the order of $GL_2(\Fq)$ is $(q^2-1)(q^2-q)$.
\end{xca}
The sum of squares of degrees of the representations that we have constructed so far is
\begin{equation*}
  \half(q-1)(q-2)(q+1)^2+(q-1)(q^2+1).
\end{equation*}
The difference between the above numbers is
\begin{equation*}
  \half(q^2-q)(q-1)^2.
\end{equation*}

We will see in Section~\ref{sec:degrees_cuspidal_representation} that there are $\half(q^2-q)$ irreducible representations of degree $q-1$ remaining.
These will be constructed in Section~\ref{sec:constr-cusp-repr}.
\section{Conjugacy classes in $SL_2(\Fq)$}
Let $\Aut(SL_2(\Fq))$ denote the group of all automorphisms of $SL_2(\Fq)$.
$GL_2(\Fq)$ acts on $SL_2(\Fq)$ by conjugation.
This gives rise to a homomorphism $GL_2(\Fq)\to \Aut(SL_2(\Fq))$.
The kernel of this automorphism consists of scalar matrices in $GL_2(\Fq)$, and is therefore isomorphic to $\Fq^*$.
The image is therefore isomorphic to the group $PGL_2(\Fq)$, which is the quotient of $GL_2(\Fq)$ by the subgroup of invertible scalar matrices.
The orbits of $PGL_2(\Fq)$ on $SL_2(\Fq)$ are precisely the conjugacy classes of $GL_2(\Fq)$ which are contained in $SL_2(\Fq)$ (note that $SL_2(\Fq)$ is a union of conjugacy classes of $GL_2(\Fq)$).

On the other hand, the image of $SL_2(\Fq)$ in $\Aut(SL_2(\Fq))$ is $PSL_2(\Fq)$, the quotient of $SL_2(\Fq)$ by the subgroup $\{\pm 1\}$.
The conjugacy classes of $SL_2(\Fq)$ are precisely the $PSL_2(\Fq)$ orbits.

Now, $PSL_2(\Fq)$ is a subgroup of $PGL_2(\Fq)$ (when both groups are viewed as subgroups of $\Aut(SL_2(\Fq))$) of index two.
Therefore, each conjugacy class of $GL_2(\Fq)$ whose elements lie in $SL_2(\Fq)$ is either a single conjugacy class in $SL_2(\Fq)$ or a union of two conjugacy classes in $SL_2(\Fq)$.
If $\epsilon$ is an element of $\Fq^*$ which is not a square (since $q$ is assumed to be odd, there are $\tfrac{q-1}2$ such elements), then the image of $\mat \epsilon 001$ in $\Aut(SL_2(\Fq))$ does not lie in the image of $SL_2(\Fq)$.

Let $\sigma\in SL_2(\Fq)$.
Whether or not the conjugacy class of $\sigma$ in $GL_2(\Fq)$ splits or not can be determined by counting.
The basic principle here is that the number of elements in an orbit for a group action is the index of the stabiliser of a point in the orbit.

With the above observations in mind, it is not difficult to prove that
\begin{theorem}
  \label{theorem:orbit-splitting}
  Let $\sigma\in SL_2(\Fq)$.
  Let $Z$ denote the centraliser of $\sigma$ in $GL_2(\Fq)$.
  Then $[Z:Z\cap SL_2(\Fq)]$ is either $q-1$ or $\tfrac{q-1}2$.
  In the former case, the conjugacy class of $\sigma$ in $GL_2(\Fq)$ is a single conjugacy class in $SL_2(\Fq)$.
  In the latter case, the conjugacy class of $\sigma$ in $GL_2(\Fq)$ is a union of two conjugacy classes in $SL_2(\Fq)$, represented by $\sigma$ and $\mat \epsilon 001 \sigma \mat \epsilon 001\inv$ respectively.
\end{theorem}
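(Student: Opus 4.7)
The plan rests on a single identity combined with the case-by-case data in Table~\ref{table:GL2-conjugacy-classes}. Since $Z\cap SL_2(\Fq)=\ker(\det|_Z)$, we have $[Z:Z\cap SL_2(\Fq)]=|\det(Z)|$, a divisor of $q-1$. To convert this into orbit-splitting information, identify the $GL_2(\Fq)$-conjugacy class $\mathcal O$ of $\sigma$ with $GL_2(\Fq)/Z$ as a $GL_2(\Fq)$-set; the $SL_2(\Fq)$-orbits on $\mathcal O$ are then parameterised by the double cosets $SL_2(\Fq)\bsl GL_2(\Fq)/Z$. Because $SL_2(\Fq)$ is the kernel of $\det\colon GL_2(\Fq)\to \Fq^*$, this double-coset space is $\Fq^*/\det(Z)$, of cardinality $(q-1)/|\det(Z)|$. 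Thus the $GL_2(\Fq)$-class remains a single $SL_2(\Fq)$-class precisely when $\det(Z)=\Fq^*$, and splits into exactly two $SL_2(\Fq)$-classes precisely when $[\Fq^*:\det(Z)]=2$.

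The bulk of the theorem is therefore the assertion that $\det(Z)$ is always $\Fq^*$ or $(\Fq^*)^2$, which I would check in the four cases of the table. For $\sigma=\pm I$ (central), $Z=GL_2(\Fq)$, so $\det(Z)=\Fq^*$. For $\sigma$ split regular semisimple, $Z=T$ and $\det|_T$ already surjects. For $\sigma$ anisotropic, $Z=\Fq[\sigma]^*\cong \FQ^*$ and $\det|_Z$ is the norm $N_{\FQ/\Fq}$, which is surjective. All three of these cases give index $q-1$. The remaining non-semisimple case $\sigma=\mat{\pm 1}{1}{0}{\pm 1}$ is the only one that splits: a direct commutation argument yields $Z=\{\mat x y 0 x : x\in\Fq^*,\; y\in\Fq\}$, on which $\det=x^2$, so $\det(Z)=(\Fq^*)^2$ and the index is $(q-1)/2$.

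In the splitting case, any $g\in GL_2(\Fq)$ with $\det(g)=\epsilon$ a nonsquare represents the nontrivial coset in $\Fq^*/\det(Z)$, so under the bijection above the element $g\sigma g\inv$ lies in $\mathcal O$ but in the $SL_2(\Fq)$-class distinct from that of $\sigma$; the choice $g=\mat\epsilon 001$ gives the representative stated in the theorem. The only steps that require genuine care are the explicit computation of the non-semisimple centraliser and the identification of $\det|_Z$ with the norm in the anisotropic case; both are short once written down. Everything else is immediate from the double-coset translation and the conjugacy class data already collected in Table~\ref{table:GL2-conjugacy-classes}.
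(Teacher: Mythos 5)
Your proof is correct, and it is essentially the argument the paper intends: the paper leaves this theorem as an exercise after sketching exactly the counting strategy (each $GL_2(\Fq)$-class is one or two $SL_2(\Fq)$-classes, decided by comparing orbit sizes via stabilisers), and your identification of the number of $SL_2(\Fq)$-orbits with $[\Fq^*:\det(Z)]=(q-1)/[Z:Z\cap SL_2(\Fq)]$ is just a clean double-coset packaging of that same orbit--stabiliser count, with the case-by-case computation of $\det(Z)$ filling in the details.
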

\begin{xca}
  Prove Theorem~\ref{theorem:orbit-splitting}.
\end{xca}
\begin{xca}
  Show that the conjugacy classes in $SL_2(\Fq)$ are as follows:
  \begin{enumerate}
  \item $2$ \emph{central} classes, represented by $\mat{\pm 1}00{\pm 1}$.
  \item $4$ \emph{non-semisimple} classes, represented by $\mat{\pm 1}10{\pm1}$ and $\mat{\pm 1}\epsilon 0{\pm 1}$.
  \item $\half(q-3)$ \emph{split regular semisimple} classes, represented by $\mat a00{a\inv}$, $a\in \Fq^*$.
  \item $\half(q-1)2$ \emph{anisotropic semisimple} classes, represented by $\mat 0{-1}1a$, where $\lambda^2-a\lambda+1$ is an irreducible polynomial in $\Fq[t]$.
  \end{enumerate}
\end{xca}
In all, there are
\begin{equation}
  \label{eq:sl2-classes}
  2+4+\frac{q-3}2+\frac{q-1}2
\end{equation}
conjugacy classes.
\begin{table}[htb]
  \caption{Conjugacy classes of $SL_2(\Fq)$}
  \label{table:SL2-conjugacy-classes}
  \begin{tabular}{|p{2.2cm}|c|c|c|c|}
    \hline
    Name & representative & centraliser & \cbox{1.5cm}{no. of classes} & size of class \\
    \hline \hline
    Central & $\mat {\pm 1}00{\pm 1}$ & $q(q-1)(q+1)$ & $2$ & $1$\\
    \hline
    Non- & $\mat {\pm 1}10{\pm 1}$  & $2q$ & $2$ & $\frac{(q-1)(q+1)}{2}$\\
    \cline{2-5}
    semisimple & $\mat {\pm 1}{\epsilon}0{\pm 1}$ & $2q$ & $2$ & $\frac{(q-1)(q+1)}{2}$ \\
    \hline
    Split regular semisimple & \cbox{2.4cm}{\begin{center}$\mat a00{a\inv}$\\ $a\in \Fq^*\setminus \{\pm 1\}$\end{center}} & $q-1$ & $\frac{q-3}{2}$ & $q(q+1)$ \\
    \hline
    Anisotropic regular semisimple & \cbox{2.4cm}{$C_p$, $p[t]\in \Fq[t]$ irreducible, $p(0)=1$} & $q+1$ & $\frac{q-1}{2}$ & $q(q-1)$ \\
    \hline
  \end{tabular}
\end{table}
\section{Parabolically induced representations for $SL_2(\Fq)$}
\label{sec:parab-induc-repr}
Let $B$ now consist of the upper triangular matrices in $SL_2(\Fq)$, $N$ the upper triangular matrices with $1$'s along the diagonal, and $T$ the matrices in $SL_2(\Fq)$ which are diagonal.
Note that the results of Exercise~\ref{xca:subgr-upper-triang} are still valid, as is the Bruhat decomposition:
\begin{equation*}
  SL_2(\Fq)=B\cup BwB, \text{ a disjoint union.}
\end{equation*}
Given a character $\chi$ of $\Fq^*$, we may think of it as a character of $T$ by
\begin{equation*}
  \chi\mat y00{y\inv} = \chi(y).
\end{equation*}
It can be extended to a character of $B$ which is trivial on $N$ by setting
\begin{equation*}
  \chi \mat yx0{y\inv} = \chi(y).
\end{equation*}
Let $I(\chi)$ be the representation of $SL_2(\Fq)$ induced from this character of $B$.
There is an analogue of Proposition~\ref{prop:intertwiners} for $SL_2(\Fq)$.
\begin{prop}
  \label{prop:induced_representations_SL}
  Let $\chi$ and $\mu$ be characters of $\Fq^*$.
  Then,
  \begin{equation*}
    \dim \Hom_{SL_2(\Fq)}(I(\chi),I(\mu))=e_1+e_w,
  \end{equation*}
  where,
  \begin{equation*}
    e_1 =
    \begin{cases} 
      1 & \text{ if } \mu = \chi,\\
      0 & \text{ otherwise,}
    \end{cases}
    \quad
    \text{and}
    \quad
        e_w =
    \begin{cases}
      1 & \text{ if } \chi=\mu\inv,\\
      0 & \text{ otherwise.}
    \end{cases}
  \end{equation*}
\end{prop}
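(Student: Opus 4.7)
The plan is to imitate the proof of Proposition~\ref{prop:intertwiners} almost verbatim, substituting the $SL_2$ Bruhat decomposition for the $GL_2$ one. By Mackey's theorem (Theorem~\ref{theorem:Mackey}), $\Hom_{SL_2(\Fq)}(I(\chi), I(\mu))$ is isomorphic to the space of functions $\Delta : SL_2(\Fq) \to \C$ satisfying
\begin{equation*}
  \Delta(b_2 g b_1) = \mu(b_2)\,\Delta(g)\,\chi(b_1) \quad \text{for all } b_1, b_2 \in B.
\end{equation*}
The Bruhat decomposition $SL_2(\Fq) = B \cup BwB$ tells us that such a $\Delta$ is completely determined by the two values $\Delta(1)$ and $\Delta(w)$, so the task is to determine when each of these is forced to vanish, and to exhibit a non-zero $\Delta$ in each of the surviving cases.

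For $\Delta(1)$, setting $g = 1$ and $b_1 = b_2 = t \in T$ gives $\mu(t)\Delta(1) = \Delta(1)\chi(t)$ for all $t \in T$. Since the character of $T$ coming from $\chi$ is $\mat{y}{0}{0}{y\inv} \mapsto \chi(y)$ (and similarly for $\mu$), this forces $\Delta(1) = 0$ unless $\chi = \mu$, which is the condition $e_1 = 1$. For $\Delta(w)$, setting $g = w$ and $b_1 = b_2 = t \in T$ gives $\mu(t)\Delta(w) = \Delta(w)\chi(w\inv t w)$. Using the computation $w\inv \mat{y}{0}{0}{y\inv} w = \mat{y\inv}{0}{0}{y}$ (which I would record as a short exercise, analogous to the corresponding one in the $GL_2$ proof), we get $\chi(w\inv t w) = \chi(y\inv)$, so $\mu(y) = \chi(y)\inv$ is forced, i.e.\ $\chi = \mu\inv$, which is $e_w = 1$.

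It remains to construct, in each surviving case, an actual non-zero $\Delta$. Define $\Delta_1$ by $\Delta_1(b) = \chi(b)$ for $b \in B$ and $\Delta_1 \equiv 0$ on $BwB$; when $\chi = \mu$ the transformation rule is immediate from $\chi$ being a character. Define $\Delta_w$ by $\Delta_w(b_2 w b_1) = \mu(b_2)\chi(b_1)$ and $\Delta_w \equiv 0$ on $B$. The only point requiring genuine checking is that $\Delta_w$ is well-defined on $BwB$: two factorisations $b_2 w b_1 = b_2' w b_1'$ correspond to an element $t = b_1' b_1\inv$ with $w t w\inv = b_2 (b_2')\inv$, and a short computation using $wNw\inv \cap B = \{1\}$ shows $t$ must lie in $T$. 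Under the hypothesis $\chi = \mu\inv$ one then verifies $\mu(wtw\inv)\chi(t) = \mu(y\inv)\chi(y) = 1$, which is exactly the compatibility needed.

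The only step that is not immediate from the $GL_2$ proof is this last well-definedness check for $\Delta_w$, which is where the $SL_2$ condition $\chi = \mu\inv$ (as opposed to the $GL_2$ condition $(\chi_1,\chi_2) = (\mu_2,\mu_1)$) genuinely enters; everything else is a transcription with $T$ replaced by the one-dimensional torus of $SL_2$. Once both $\Delta_1$ and $\Delta_w$ are in hand, an arbitrary $\Delta$ satisfying the Mackey condition is a linear combination of them by the Bruhat decomposition, yielding the dimension count $e_1 + e_w$.
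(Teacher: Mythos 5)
Your proof is correct and follows exactly the strategy the paper intends: the paper itself omits the proof, stating only that this is ``an analogue of Proposition~\ref{prop:intertwiners},'' and you have carried out that analogue faithfully via Mackey's theorem, the $SL_2$ Bruhat decomposition, and the computation $w\inv\mat{y}{0}{0}{y\inv}w=\mat{y\inv}{0}{0}{y}$. One small slip in exposition: ``setting $g=1$ and $b_1=b_2=t$'' would actually give $\Delta(t^2)=\mu(t)\chi(t)\Delta(1)$; the relation $\mu(t)\Delta(1)=\Delta(1)\chi(t)$ comes from the two separate substitutions $(b_2,b_1)=(t,1)$ and $(b_2,b_1)=(1,t)$, both evaluating $\Delta(t)$, as in the paper's $GL_2$ proof.
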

Taking $\mu=\chi$ in Proposition~\ref{prop:induced_representations_SL} gives that
\begin{equation*}
  \dim\End_{SL_2(\Fq)}(I(\chi))=
  \begin{cases} 2 & \text{ if } \chi = \chi\inv\\
    1 & \text{ otherwise.}
  \end{cases}
\end{equation*}
Take $\epsilon\in \Fq^*$ to be a generator (this is cyclic of even order $q-1$ by Theorem~\ref{theorem:multiplicative_group}).
Note that $\chi$ is completely determined by $\chi(\epsilon)$, which can be any $(q-1)$st root of unity in $\C^*$.
Furthermore, $\chi=\chi\inv$ if and only if $\chi(\epsilon)=\chi(\epsilon)\inv$, i.e., if and only if $\chi(\epsilon)=\pm1$.
Therefore, there are $q-3$ characters $\chi$ for which $I(\chi)$ is irreducible.
For each of these, $I(\chi)\cong I(\chi\inv)$, and there are no other isomorphic pairs.
We get $\tfrac{q-3}2$ such irreducible representations, each of degree $q+1$.
There remain the characters $\chi$ for which $\chi(\epsilon)=\pm1$.
Each of these give rise to two irreducible non-isomorphic representations.
We consider the two cases separately:
\subsection*{Case $\chi(\epsilon)=1$}
In this case, $I(\chi)$ contains the invariant one dimensional subspace of constant functions on $G$.
Therefore $I(\chi)$ splits into a direct sum of two irreducible representations, the trivial representation and a representation of dimension $q$, which is called the \emph{Steinberg representation}\index{myindex}{Steinberg representation}.
\subsection*{Case $\chi(\epsilon)=-1$}
In this case it is necessary to make a closer analysis of $\End_{SL_2(\Fq)}(I(\chi))$.
Let $\Delta_1$ denote the unique function $SL_2(\Fq)\to \C$ for which $\Delta_1(1)=1$, $\Delta_1(w)=0$, and $\Delta(b_1gb_2)=\chi(b_1)\Delta(g)\chi(b_2)$.
Also let $\Delta_w$ denote the unique function $SL_2(\Fq)\to \C$ for which $\Delta_w(1)=0$, $\Delta_w(w)=1$, and $\Delta(b_1gb_2)=\chi(b_1)\Delta(g)\chi(b_2)$.
These two functions form a basis of $\End_{SL_2(\Fq)}(I(\chi))$.
Write $I(\chi)=\rho^+\oplus \rho^-$, where $\rho^+$ and $\rho^-$ are the two irreducible summands of of $I(\chi)$.
The identity endomorphism in $I(\chi)$ can be written as a sum of two idempotents, coming from the identity endomorphisms of $\rho^+$ and $\rho^-$.
\begin{xca}
  Show that the identity endomorphism of $I(\chi)$ is given by $f\mapsto q\inv (q-1)\inv \Delta_1*f$.
\end{xca}
\begin{xca}
  Show that
  \begin{align*}
    \Delta_1*\Delta_1=q(q-1)\Delta_1, \quad &\Delta_1*\Delta_w=q(q-1)\Delta_w,\\
    \Delta_w*\Delta_1=q(q-1)\Delta_w, \quad &\Delta_w*\Delta_w=q^2(q-1)\chi(-1)\Delta_1.
  \end{align*}
\end{xca}
\begin{xca}
  Besides $q\inv(q-1)\inv \Delta_1$ and $0$, show that the only idempotents in $\End_{SL_2(\Fq)}(I(\chi))$ are
  \begin{equation*}
    \half q\inv (q-1)\inv(\Delta_1\pm (\sqrt{-1})^\kappa q\inv \Delta_w).
  \end{equation*}
  Here $\kappa=0$ if $\chi(-1)=1$ and $\kappa=1$ if $\chi(-1)=-1$.
\end{xca}
Let ${}^\epsilon I(\chi)$ be the representation of $SL_2(\Fq)$ on the representation space of $I(\chi)$, but where the action of $SL_2(\Fq)$ is given by 
\begin{equation*}
  \mat abcd f(x)=f\left(x \mat \epsilon 001\inv \mat abcd \mat\epsilon 001\right).
\end{equation*}
\begin{xca}
  Show that $f\mapsto \tilde f$, where $\tilde f(x)=f\left(x\mat \epsilon 001\right)$ is an isomorphism $I(\chi)\to {}^\epsilon I(\chi)$.
\end{xca}
\begin{xca}
  Show that $\tilde\Delta_1=\Delta_1$ and $\tilde \Delta_w=-\Delta_w$.
  Conclude that ${}^\epsilon \rho^+=\rho^-$ and ${}^\epsilon \rho^-=\rho^+$.
\end{xca}
Therefore, the two representations $\rho^+$ and $\rho^-$ must have equal degrees.
It follows that $I(\chi)$ is a sum of two irreducible representations, each of degree $\tfrac{q+1}2$.

In this section, we have constructed
\begin{equation*}
  2+2+\frac{q-3}2
\end{equation*}
irreducible representations of $SL_2(\Fq)$.
Comparing with (\ref{eq:sl2-classes}) we see that there remain $2+\frac{q-1}2$ irreducible representations to construct.
The sums of squares of the degrees of the representations that we have constructed so far is:
\begin{equation*}
  \frac{q-3}{2}(q+1)^2+2\left(\frac{q+2}{2}\right)^2+1+q^2.
\end{equation*}
The order of $SL_2(\Fq)$ is $q^3-q$.
The sums of the degrees of the irreducible representations that remain is therefore,
\begin{equation*}
  2\left(\frac{q-1}2\right)^2+(q-1)^2\frac{q-1}2.
\end{equation*}
We will see in Section~\ref{sec:cusp-repr-sl_2fq} that, among the representations that remain to be constructed, there are two of degree $\frac{q-1}2$, and $\frac{q-1}2$ of degree $q-1$.
\chapter{Construction of the cuspidal representations}
\section{Projective Representations and Central Extensions}
\label{sec:extensions}
Let $G$ be a finite group and let $\Hilbert$ be a Hilbert space.
Denote by $U(\Hilbert)$ the group of unitary automorphisms of $\Hilbert$.
Let $U(1)$ denote the group $\{z\in \C\;:\: |z|=1\}$ under multiplication.
\begin{defn}
  [Projective representation]
  \index{myindex}{representation!projective}
  A \emph{projective representation} of $G$ on $\Hilbert$ is a function $\eta:G\to U(\Hilbert)$ such that for every $g,h\in G$, there exists a constant $c(g,h)\in U(1)$ such that
  \begin{equation}
    \label{eq:cocycle-definition}
    \index{myindex}{cocycle!of a projective representation}
    \eta(gh)=c(g,h)\eta(g)\eta(h).
  \end{equation}
\end{defn}
A projective representation where $c(g,h)=1$ is a representation in the sense of Section~\ref{sec:defintions} and, for emphasis, will be called an \lq\lq ordinary representation\rq\rq.
\begin{xca}
  Use the associative law on $G$ to show that the function $c:G\times G\to U(1)$ defined above satisfies the \emph{cocycle condition}:
  \begin{equation}
    \label{eq:cocycle-condition}
    \index{myindex}{cocycle!condition}
    c(g,h)c(gh,k)=c(g,hk)c(h,k).
  \end{equation}
\end{xca}
It is natural to ask whether, given a projective representation $\eta$, is it possible to find suitable scalars $s(g)\in U(1)$ for each $g\in G$ such that $\eta(g)s(g)$ is an ordinary representation.
If such a set of scalars did exist, it would mean that
\begin{equation*}
  \eta(g)s(g)\eta(h)s(h)=\eta(gh)s(gh)
\end{equation*}
for all $g,h\in G$.
Applying (\ref{eq:cocycle-definition}) gives the \emph{coboundary condition}\index{myindex}{coboundary condition}:
\begin{equation*}
  \label{eq:coboundary-condition}
  s(g)s(h)=c(g,h)s(gh).
\end{equation*}
This motivates the following definitions:
\begin{defn}\hspace{0cm}
  \begin{enumerate}
  \item The abelian group of $2$-cocycles of $G$ in $U(1)$ consists of functions $c:G\times G\to U(1)$ which satisfy (\ref{eq:cocycle-condition}).
    This group is denoted $Z^2(G,U(1))$.
  \item Given a function $s:G\to U(1)$, its \emph{coboundary} is defined as the cocycle $c(g,h)=s(g)\inv s(h)\inv s(gh)$.
    The subgroup of $Z^2(G,U(1))$ consisting of all coboundaries is denoted $B^2(G,U(1))$.
  \item The second cohomology group of $G$ with coefficients in $U(1)$ is the quotient $H^2(G,U(1))=Z^2(G,U(1))/B^2(G,U(1))$.
  \end{enumerate}
\end{defn}
Observe that
\begin{prop}
  For any projective representation $\eta$ of $G$, there exists a function $s:G\to U(1)$ such that $\eta(g)s(g)$ is an ordinary representation if and only if the the cocycle defined by (\ref{eq:cocycle-definition}) is a coboundary.
\end{prop}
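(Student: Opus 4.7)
The plan is to observe that this statement is essentially a direct unpacking of definitions, converting the requirement that $\tilde\eta(g):=\eta(g)s(g)$ be an ordinary representation into an equation on $s$ that is precisely the coboundary relation already displayed just before the proposition. So the whole proof should fit in a few lines; the only thing to be careful about is the sign convention the paper has adopted for coboundaries.

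For the forward direction, I would assume $\tilde\eta(g):=\eta(g)s(g)$ is an ordinary representation and compute both sides of the homomorphism identity. On one hand, since $s(g),s(h)\in U(1)$ are scalars and commute with every operator,
\begin{equation*}
  \tilde\eta(g)\tilde\eta(h) = \eta(g)s(g)\eta(h)s(h) = s(g)s(h)\,\eta(g)\eta(h).
\end{equation*}
On the other hand, by (\ref{eq:cocycle-definition}),
\begin{equation*}
  \tilde\eta(gh) = \eta(gh)s(gh) = c(g,h)\,\eta(g)\eta(h)\,s(gh).
\end{equation*}
Equating these and cancelling the invertible operator $\eta(g)\eta(h)$ (the $\eta(g)$ are unitary, hence invertible) yields the coboundary condition $s(g)s(h)=c(g,h)s(gh)$. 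Setting $t(g):=s(g)\inv$, this rearranges to $c(g,h)=t(g)\inv t(h)\inv t(gh)$, exhibiting $c$ as the coboundary of $t$ in the paper's convention.

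For the reverse direction I would run the computation backwards. Suppose $c\in B^2(G,U(1))$, so there exists $t:G\to U(1)$ with $c(g,h)=t(g)\inv t(h)\inv t(gh)$. Set $s(g):=t(g)\inv$, so that $s(g)s(h)=c(g,h)s(gh)$, and define $\tilde\eta(g):=\eta(g)s(g)$. Then
\begin{equation*}
  \tilde\eta(g)\tilde\eta(h) = s(g)s(h)\,\eta(g)\eta(h) = c(g,h)s(gh)\,\eta(g)\eta(h) = s(gh)\,\eta(gh) = \tilde\eta(gh),
\end{equation*}
so $\tilde\eta$ is an ordinary representation, as required.

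The argument presents no real obstacle, as it is a routine definition-chase; the only mild nuisance is bookkeeping of inverses. Because the paper defines the coboundary of $s$ as $s(g)\inv s(h)\inv s(gh)$ (rather than $s(g)s(h)s(gh)\inv$), the rescaling that straightens a projective representation into an ordinary one is by $t(g)\inv$ rather than by $t(g)$ itself. Since $B^2(G,U(1))$ is closed under pointwise inversion of cochains, this convention mismatch is harmless and affects only notation.
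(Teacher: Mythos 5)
Your proof is correct and follows the paper's approach — the paper derives the coboundary condition $s(g)s(h)=c(g,h)s(gh)$ in the paragraph just before the proposition and then states the result without a separate proof block, treating it as an observation. You are actually slightly more careful than the paper in noting that under the paper's stated definition of coboundary, this condition exhibits $c$ as the coboundary of $s\inv$ rather than of $s$, so that closure of $B^2(G,U(1))$ under inversion is implicitly being used.
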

\begin{defn}
  [Central Extension]
  \index{myindex}{central extension}
  A \emph{central extension} of $G$ by $U(1)$ is a group $\tilde G$, together with a short exact sequence
  \begin{equation*}
    1\to U(1)\to \tilde G \to G \to 1
  \end{equation*}
  such that $U(1)$ is contained in the centre of $\tilde G$.
\end{defn}
Given a central extension $\tilde G$ of $G$ by $U(1)$, pick any function $s:G\to \tilde G$ (which may not be a homomorphism) such that the image of $s(g)$ in $G$ is again $g$. 
Such a function is called a \emph{section}.
The failure of $s$ to be a homomorphism is measured by
\begin{equation}
  \label{eq:cocycle-extension}
  \index{myindex}{cocycle!of a central extension}
  c(g,h)=s(gh)s(h)\inv s(g)\inv \in U(1).
\end{equation}
\begin{xca}
  Show that $c(g,h)$ defined in (\ref{eq:cocycle-extension}) satisfies the cocycle condition (\ref{eq:cocycle-condition}).
  Moreover, if $s$ is replaced by another section $s'$, and $c'$ is the resulting cocycle, then $c'c\inv$ is a coboundary.
\end{xca}
Thus a central extension of $G$ by $U(1)$ determines a well-defined element of $H^2(G,U(1))$.
\begin{xca}
  \label{xca:central-extension}
  Given a cocycle $c:G\times G\to U(1)$ satisfying (\ref{eq:cocycle-condition}), show that $G(c)=G\times U(1)$ with multiplication defined by
  \begin{equation*}
    (g,z)(g',z')=(gg',zz'c(g,g')\inv),
  \end{equation*}
  is a central extension of $G$ by $U(1)$.
  Moreover if $c':G\times G\to U(1)$ is another cocycle, there is an isomorphism $\alpha:G(c)\to G(c')$ such that the diagram
  \begin{equation*}
    \xymatrix{
      1 \ar[r] & U(1) \ar[r] \ar@{=}[d] & G(c) \ar[r] \ar[d]_{\alpha} & G \ar[r] \ar@{=}[d] & 1 \\
      1 \ar[r] & U(1) \ar[r] & G(c') \ar[r] & G \ar[r] & 1
    }
  \end{equation*}
  commutes if and only if $c'c\inv$ is a coboundary.
\end{xca}
In this way, $H^2(G,U(1))$ classifies the central extensions of $G$ by $U(1)$.
Thus, $H^2(G,U(1))$ arises in two different contexts:
\begin{enumerate}
\item It measures the obstruction to modifying a projective representation to an ordinary representation.
\item It classifies the central extensions of $G$ by $U(1)$.
\end{enumerate}
The two are related in the following way:
\begin{xca}
  \label{xca:extensions-and-projective-representations}
  If $\eta$ is a projective representation and $c$ is the cocycle associated to it by (\ref{eq:cocycle-definition}), then $\tilde \eta:G(c)\to U(\Hilbert)$ defined by $\tilde \eta(g,z)=z\eta(g)$ defines an ordinary representation of $G(c)$. 
\end{xca}
In other words, every projective representation can be resolved into an ordinary representation of the central extension corresponding to its cocycle.
\section{The Heisenberg group}
Assume that the finite group $G$ is abelian.
Let $L^2(G)$ denote the Hilbert space obtained when the space of complex valued functions on $G$ is endowed with the Hermitian inner product $\sum_x f(x)\ol{g(x)}$.
On $L^2(G)$, there are two natural families of unitary operators:
\begin{tabbing}
  Translation operators: \hspace{1cm} \= $(T_xf)(y)=f(y-x)$,\hspace{1cm} \= $x\in G$,\\
  Modulation operators: \> $(M_\chi f)(y)=\chi(y)f(y)$, \> $\chi\in \hat G$.
\end{tabbing}
The translation operators give a unitary representation of $G$ on the Hilbert space $L^2(G)$.
The modulation operators give a unitary representation of $\hat G$ on the same space.
However, these operators do not commute:
\begin{xca}
  \label{xca:commutator}
  Show that
  \begin{equation*}
    [T_x,M_\chi]f=\chi(-x)f \text{ for each } f\in L^2(G).
  \end{equation*}
\end{xca}
The commutator is a scalar.
Thus the map $\eta:G\times \hat G\to U(L^2(G))$ defined by
\begin{equation*}
  \eta(x,\chi) = T_x M_\chi
\end{equation*}
defines a projective representation of $G\times \hat G$ on $L^2(G)$.
\begin{xca}
  Show that the cocycle of $G\times \hat G$ with coefficients in $U(1)$ associated to $\eta$ in (\ref{eq:cocycle-definition}) is given by
  \begin{equation}\label{eq:Heisenberg-cocycle}
    c((x,\chi),(x',\chi'))=\chi(x')\inv.
  \end{equation}
\end{xca}
\begin{defn}
  [Heisenberg group]
  \index{myindex}{Heisenberg group}
  The Heisenberg group $H(G)$ of $G$ is the central extension of $G\times \hat G$ by $U(1)$ corresponding to the cocycle (\ref{eq:Heisenberg-cocycle}) (see Exercise~\ref{xca:central-extension}).
\end{defn}
Explicitly, $H(G)$ is the group whose underlying set of points is $G\times \hat G\times U(1)$ with multiplication given by
\begin{equation}
  \label{eq:Heisenberg-multiplication}
  (x,\chi,z)(x'\chi',z')=(x+x',\chi+\chi',zz'\chi(x')).
\end{equation}
The projective representation $\eta$ of $G\times \hat G$ gives rise to an ordinary representation $\tilde \eta$ of $H(G)$ on $L^2(G)$, known as the \emph{Heisenberg representation}\index{myindex}{Heisenberg representation} (see Exercise~\ref{xca:extensions-and-projective-representations}).
Explicitly, the Heisenberg representation is realized as
\begin{equation}
  \label{eq:Heisenberg-action}
  \tilde\eta(x',\chi',z')f(x)=z'\chi'(x-x')f(x-x').
\end{equation}
\begin{remark}
  In the construction, and in all arguments relating to the Heisenberg group $H(G)$, where $G$ is a finite abelian group, $U(1)$ can be replaced by an appropriate finite subgroup.
  Therefore, we may pretend that $H(G)$ is a finite group.
\end{remark}
\begin{xca}
  \label{xca:normal-subgroup-of-Heisenberg-group}
  Verify that $N:=\{0\}\times \hat G\times U(1)$ and $\hat N:=G\times \{0\} \times U(1)$ are normal subgroups of $H(G)$.
  $Z:=\{0\}\times \{0\}\times U(1)$ is the centre of $H(G)$.
  Here $0$ denotes the identity element of either $G$ or $\hat G$.
\end{xca}
Let $\theta:N\to \C^*$ be the character given by $\theta(0,\chi,z)=z$.
Then the induced representation $\theta^{H(G)}$ is a representation of $H(G)$ on the space
\begin{equation}
\label{eq:I}
  I:=\{f:H(G)\to \C \;|\: f(ng)=\theta(n)f(g) \text{ for all } n\in N, g\in H(G)\}.
\end{equation}
The action of $H(G)$ on $I$ is given by $g'f(g)=g(gg')$.
For each $f\in I$, define $\tilde f(x)=f(-x,0,1)$.
Since the elements $(-x,0,1)$, with $x\in G$ form a complete set of representatives of the cosets in $N\bsl H(G)$, $f\mapsto \tilde f$ is an isomorphism of $I$ onto $L^2(G)$.
Let $g'=(x',\chi',z')$ be an element of $H(G)$
\begin{eqnarray*}
  \widetilde{g'f}(x) & = & g'f(-x,0,1)\\
  & = & f(x'-x,\chi',z')\\
  & = & f((0,\chi',z'\chi'(x'-x)\inv)(x'-x,0,1)\\
  & = & z'\chi'(x'-x)\inv f(x'-x,0,1)\\
  & = & z'\chi'(x-x')\tilde f(x-x').
\end{eqnarray*}
Comparing with (\ref{eq:Heisenberg-action}) shows that $\theta^{H(G)}$ is isomorphic the Heisenberg representation $\tilde \eta$.

Let $\hat \theta : \hat N\to \C^*$ be the character given by $\hat \theta (x,0,z)=z$.
Then $\hat \theta^{H(G)}$ is a representation of $H(G)$ on the space
\begin{equation*}
  \hat I :=\{f:H(G)\to \C \;|\: f(\hat n g)=\hat \theta (\hat n)f(g) \}.
\end{equation*}
For each $f\in \hat I$, define $\tilde f (\chi)=f(0,-\chi,1)$.
Since the elements $(0,-\chi,1)$, with $\chi\in \hat G$ form a complete set of representatives of the cosets in $\hat N\bsl H(G)$, $f\mapsto \tilde f$ defines an isomorphism of $\hat I$ onto $L^2(\hat G)$.
\begin{xca}
  Show that in this realization of $\hat \theta^{H(G)}$ on $L^2(\hat G)$, the action of $H(G)$ is given by
  \begin{equation*}
    ((x',\chi',z')f)(\chi)=z'\chi(x')\inv f(\chi-\chi')
  \end{equation*}
\end{xca}
\begin{xca}
  Show that the Fourier transform\index{myindex}{Fourier transform} $\FT:L^2(G)\to L^2(\hat G)$ defined by
\begin{equation*}
  \FT f (\chi) = \sum_{x\in g} f(x)\ol{\chi(x)},\text{ for }\chi\in \hat G
\end{equation*}
is an isomorphism of $H(G)$\dash representations.
\end{xca}
\begin{theorem}
  \label{theorem:Stone-von-Neumann}
  The representation $\tilde \eta$ is irreducible.
  Every irreducible representation of $H(G)$ on which $Z$ acts by the identity character of $U(1)$ is isomorphic to $\tilde\eta$.
\end{theorem}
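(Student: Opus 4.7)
The plan is to exploit the abelian normal subgroup $N = \{0\} \times \hat G \times U(1)$ of $H(G)$, combining Mackey's irreducibility criterion (Corollary~\ref{cor:Mackeycor}) for the first assertion with Clifford's theorem (Proposition~\ref{prop:Clifford}) and Mackey's imprimitivity theorem (Proposition~\ref{prop:Mackey-imprimitivity}) for the second. The single computation that drives everything is the conjugation formula in $H(G)$: for $g = (b,\beta,w)$ and $(0,\chi,z) \in N$, the multiplication rule~\eqref{eq:Heisenberg-multiplication} gives $g\inv(0,\chi,z)g = (0,\chi,z\chi(b))$. This is essentially a repackaging of the commutator relation in Exercise~\ref{xca:commutator}.

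For irreducibility, I would use the identification $\tilde\eta \cong \theta^{H(G)}$ established just above the statement. Since $\theta$ is one-dimensional and $N$ is normal, Corollary~\ref{cor:Mackeycor} reduces the claim to checking that ${}^g\theta \neq \theta$ whenever $g \notin N$. The conjugation formula yields ${}^g\theta(0,\chi,z) = z\chi(b)$, which coincides with $\theta(0,\chi,z) = z$ for all $\chi$ only when $\chi(b) = 1$ for every $\chi \in \hat G$. By Pontryagin duality this forces $b = 0$, i.e., $g \in N$, so irreducibility follows.

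For the uniqueness statement, let $\rho$ be any irreducible representation of $H(G)$ on which $Z$ acts by the identity character. First I would enumerate the relevant characters of $N$: writing $N \cong \hat G \times U(1)$, the characters whose restriction to $Z$ is the identity are precisely the $\psi_a(0,\chi,z) = z\chi(a)$ for $a \in G$, via the canonical isomorphism $\hatt{G} \cong G$ from Section~\ref{sec:pontryagin_dual}. The conjugation formula gives ${}^g\psi_a = \psi_{a+b}$, so $H(G)$ acts transitively on this family and the stabiliser of $\psi_0 = \theta$ is exactly $N$. Clifford's theorem then forces the $N$-isotypic decomposition of $V_\rho$ to run over a single orbit in $\hat N$, which here is all of $\{\psi_a : a \in G\}$, and Mackey's imprimitivity yields $\rho \cong \rho_\theta^{H(G)}$ where $\rho_\theta$ is the isotypic component $V_\theta$ viewed as an $N$-representation. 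A dimension count using $[H(G):N] = |G| = \dim \tilde\eta$ together with the irreducibility of $\rho$ forces $\dim V_\theta = 1$, so $\rho_\theta = \theta$ and $\rho \cong \theta^{H(G)} \cong \tilde\eta$.

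The main obstacle is bookkeeping rather than conceptual: one must correctly identify the characters of $N$ compatible with the prescribed action of $Z$, recognise the resulting $H(G)$-orbit in $\hat N$ as the whole of $\{\psi_a : a \in G\}$, and verify that the stabiliser of $\theta$ is exactly $N$ and not something larger. Once those identifications are in place, the irreducibility argument and the classification argument are near-mirror applications of the normal-subgroup machinery developed in Chapter~\ref{cha:general-results}.
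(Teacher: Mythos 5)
Your proposal is correct and follows essentially the same route as the paper: irreducibility of $\theta^{H(G)}\cong\tilde\eta$ via Corollary~\ref{cor:Mackeycor}, and uniqueness via Clifford's theorem, the transitivity of $H(G)$ on the characters of $N$ restricting to the identity on $Z$, the computation $H(G)_\theta=N$, and Mackey's imprimitivity theorem. You have merely written out the conjugation formula and the stabiliser/orbit computations that the paper leaves as exercises, and added the (correct) observation that irreducibility of $\rho$ forces $\dim V_\theta=1$.
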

\begin{proof}
  The irreducibility of $\tilde \eta$ follows from the following exercise:
  \begin{xca}
    Use Corollary~\ref{cor:Mackeycor} to show that $\theta^{H(G)}$ is irreducible.
  \end{xca}
  Suppose that $\rho$ is an irreducible representation of $H(G)$ on which $Z$ acts by the identity character of $U(1)$.
  By Proposition~\ref{prop:Clifford}, 
  \begin{equation*}
    V_\rho = \bigoplus_{\chi\in \hat N(\rho)} V_\chi,
  \end{equation*}
  where $\hat N(\rho)$ consists of a single $H(G)$\dash orbit of characters of $N$.
  By hypothesis, the restriction of all these characters to $Z$ is the identity character of $U(1)$.
  \begin{xca}
    Show that $H(G)$ acts transitively on the set of characters of $N_1$ whose restriction to $Z$ is the identity character of $U(1)$.
  \end{xca}
  \begin{xca}
    Show that $H(G)_\theta = N$.
  \end{xca}
  Therefore, $\theta \in \hat N(\rho)$, and by Proposition~\ref{prop:Mackey-imprimitivity}, $\rho \cong \theta^{H(G)}$.
\end{proof}

Given an automorphism $\sigma$ of $H(G)$, let ${}^\sigma \tilde \eta$ denote the representation of $H(G)$ on the representation space $V_\eta$ of $\eta$ given by ${}^\sigma \tilde \eta(g)= \tilde \eta({}^{\sigma\inv} g)$.
If $\sigma$ fixes every element of $Z$, then ${}^\sigma\tilde \eta$ is also an irreducible representation of $H(G)$ on which $Z$ acts by the identity character of $U(1)$.
By Theorem~\ref{theorem:Stone-von-Neumann}, $\tilde \eta$ and ${}^\sigma \tilde \eta$ are equivalent.
Therefore, there exists $\nu(\sigma):V_\eta\to V_\eta$ such that 
\begin{equation}
  \label{eq:projint}
  \nu(\sigma) \circ \tilde \eta(g) = {}^\sigma \tilde \eta (g) \circ \nu(\sigma) \text{ for every } g\in H(G).
\end{equation}
Moreover, by Schur's lemma, $\nu(\sigma)$ is uniquely determined modulo a scalar.
Let $B_0(G)$ denote the group of all automorphisms of $H(G)$ which fix the elements of $Z$.
\begin{xca}
  Show that 
  \begin{equation*}
    \nu(\sigma)\circ \nu(\sigma') \circ \tilde \eta(g) = {}^{\sigma'\sigma}\tilde \eta(g)\circ \nu(\sigma)\circ\nu(\sigma').
  \end{equation*}
  Conclude that $\nu(\sigma'\sigma)$ and $\nu(\sigma)\circ \nu(\sigma')$ agree up to multiplication by a scalar.
\end{xca}
It follows that the map $\sigma\mapsto \rho(\sigma)=\nu(\sigma\inv)$ is a projective representation of $B_0(G)$ on $L^2(G)$.
Projective representations of subgroups of $B_0(G)$ constructed in this way are known as \emph{Weil representations}\index{myindex}{Weil representation}.
In order to construct $\nu(\sigma)$ it is helpful to think of the realization of $\tilde \eta$ as $\theta^{H(G)}$.
The underlying vector space is the subspace $I$ (see (\ref{eq:I})) of $\C[H(G)]$.
Let $r$ denote the representation of $H(G)$ on $\C[H(g)]$, where $H(G)$ acts by
\begin{equation*}
  r(g')f(g)=f(gg').
\end{equation*}
It is easy to come up with an isomorphism between $r$ and ${}^\sigma r$, namely
$(\nu_r(\sigma)f)(g)=f({}^\sigma g)$.
Unfortunately, $\nu_r(\sigma)f$ may no longer lie in $I$.
This is rectified by modifying $\nu_r(\sigma)$ by an averaging operation to get $\nu(\sigma)$, as is seen in the following exercise:
\begin{xca}
  \label{xca:intertwiner}
  If $f\in I$, show that the function $\nu(\sigma)f$ defined by
  \begin{equation}
    \label{eq:intertwiner}
    (\nu(\sigma)f)(g)=\sum_{\chi\in \hat G} f({}^\sigma((0,\chi,1)g))
  \end{equation}
  is also in $I$.
  The solution will use the fact that $\sigma$ fixes every element of $Z$.
  Show that $\nu(\sigma)$ defined above satisfies (\ref{eq:projint}).
\end{xca}
\begin{xca}
  \label{xca:auto}
  Let $Q:G\times \hat G \to U(1)$ denote the map 
  \begin{equation*}
    Q((x,\chi),(x',\chi'))=\chi(x').
  \end{equation*}
  Let $\sigma$ be any automorphism of $G\times \hat G$ such that
  \begin{equation*}
    Q(\sigma(x,\chi),\sigma(x',\chi'))=Q((x,\chi),(x',\chi')).
  \end{equation*}
  Then the function $\tilde \sigma: H(G)\to H(G)$ defined by
  \begin{equation*}
    \tilde \sigma (x,\chi,z)=(\sigma(x,\chi),z)
  \end{equation*}
  is an automorphism of $H(G)$.
\end{xca}
\begin{xca}
  [Symplectic form of the Heisenberg group]
  \index{myindex}{Heisenberg group!symplectic form}
  \label{xca:symplectic-form}
  Assume that $x\mapsto 2x$ is an automorphism of $G$.
  Consider the bijection $\phi:H(G)\to G\times \hat G\times U(1)$ given by
  \begin{equation*}
    \phi(x,\chi,z)=(x,\chi,z\chi(-\tfrac x2)).
  \end{equation*}
  The multiplication map $m:H(G)^2\to H(G)$ gives rise to a new multiplication map $m':(G\times \hat G \times U(1))^2\to G\times \hat G \times U(1)$ determined by the commutativity of the diagram
  \begin{equation*}
    \xymatrix{
      H(G)^2\ar[r]^m \ar[d]_{\phi\times \phi} & H(G) \ar[d]^{\phi} \\
      (G\times \hat G\times U(1))^2 \ar[r]^{m'} & G\times \hat G \times U(1)
    }
  \end{equation*}
  Show that 
  \begin{equation}
    \label{eq:Heisenberg-multiplication-symplectic-form}
    m'((x,\chi,z),(x',\chi',z'))=(x+x',\chi+\chi',zz'\chi(\tfrac{x'}2)\chi'(-\tfrac x2)).
  \end{equation}
\end{xca}
\section{A special Weil representation}
\label{sec:weil-representation}
In this section $SL_2(\Fq)$ will be realized as a subgroup of $B_0(G)$ for $G=\FQ$.
The resulting Weil representation will turn out to be an ordinary representation (Proposition~\ref{prop:true}).
All the cuspidal representations of $GL_2(\Fq)$ and $SL_2(\Fq)$ will be found inside this representation in Sections~\ref{sec:constr-cusp-repr} and \ref{sec:cusp-repr-sl_2fq} respectively.
Let $G$ be the additive group of $\FQ$.
The map $x\mapsto (y\mapsto \psi(\tr(\ol xy)))$ defines an isomorphism of $\FQ$ onto $\hat{\FQ}$ by Proposition~\ref{prop:additive-Pontryagin-dual}.
Using this identification, the Heisenberg group $H(\FQ)$ can be realized as $\FQ\times \FQ\times U(1)$, with multiplication
\begin{equation*}
  m((x,y,z),(x',y',z'))=(x+x',y+y',zz'\psi(\tr(\ol y x'))).
\end{equation*}
In the symplectic form (see Exercise~\ref{xca:symplectic-form}), multiplication is given by
\begin{equation*}
  m'((x,y,z),(x',y',z'))=(x+x',y+y',zz'\psi(\tr(\half(\ol y x'-\ol{y'}x)))).
\end{equation*}
To go from the Heisenberg group to its symplectic form, the transformation is given by $\phi(x,y,z)=(x,y,z\psi(\tr(\half \ol y x)))$.
Suppose $\sigma=\mat abcd\in SL_2(\Fq)$.
Then if $Q((x,y),(x',y'))=\tr(\half (\ol y x' - \ol{y'}x))$, 
\begin{equation*}
  Q((ax+by,cx+dy),(ax'+by',cx'+dy'))=Q((x,y),(x',y')).
\end{equation*}
It follows that 
\begin{equation*}
  (x,y,z)\mapsto (ax+by,cx+dy,z)
\end{equation*}
defines an automorphism of the symplectic form of the Heisenberg group.
Using $\phi$, we may associate to $\sigma$ the automorphism
\begin{equation}
  \label{eq:action}
  (x,y,z)\mapsto (ax+by,cx+dy,z\psi(\half\tr(-\ol y x + (\ol{cx+dy})(ax+by)))).
\end{equation}
of the Heisenberg group $H(G)$ in its usual coordinates.
\begin{xca}
Show that in the action defined by (\ref{eq:action}), $t(a)=\mat a00{a\inv}$, when $a\in \Fq^*$, acts by
\begin{equation*}
  (x,y,z)\mapsto (ax,a\inv y,z),
\end{equation*}
$w=\mat 01{-1}0$ acts by
\begin{equation*}
  (x,y,z)\mapsto (y,-x,z\psi(\half \tr(-\ol xy-\ol yx))),
\end{equation*}
and $u(c)=\mat 10c1$, when $c\in \Fq$, acts by
\begin{equation*}
  (x,y,z)\mapsto (x,cx+y,z\psi(\half \tr(c\ol x x))).
\end{equation*}
\end{xca}
In the present context, (\ref{eq:intertwiner}) gives 
\begin{equation*}
  (\nu(\sigma)f)(-x,0,1) = \frac{1}{q^2} \sum_{y\in \FQ} f({}^\sigma(-x,y,\psi(\tr(-\ol y x)))).
\end{equation*}
Now,
\begin{multline*}
  {}^\sigma(-x,y,\psi(\tr(-\ol y x))) \\ =  (-ax+by,-cx+dy,\psi(\tr(-\ol y x + \half ( \ol y x + (\ol{-cx+dy})(-ax+by)))))\\
  = (0,-cx+dy,\psi(\half \tr(-\ol y x - (\ol{-cx+dy})(-ax+by))))(-ax+by,0,1).
\end{multline*}
Therefore,
\begin{equation*}
  f({}^\sigma(-x,y,\psi(\tr(-\ol y x)))) = \psi(\half \tr(-\ol yx - (\ol {-cx+dy})(-ax+by)))f(-ax+by,0,1).
\end{equation*}
Therefore,
\begin{multline*}
  (\nu(\sigma)f)(-x,0,1) \\= \frac{1}{q^2} \sum_{y\in \FQ} \psi(\half \tr(-\ol yx - (\ol {-cx+dy})(-ax+by)))f(-ax+by,0,1).
\end{multline*}
Therefore, in the realization of $\tilde\eta$ as $L^2(\FQ)$,
\begin{equation*}
  (\nu(\sigma)\tilde f)(x)=\frac{1}{q^2}\sum_{y\in \FQ} \psi(\half\tr(-\ol yx-(\ol{-cx+dy})(-ax+by)))\tilde f(ax-by)
\end{equation*}
for each $\tilde f\in L^2(G)$, and therefore,
\begin{equation}
  \label{eq:special_intertwiner}
  (\rho(\sigma)\tilde f)(x)=\frac{1}{q^2}\sum_{y\in \FQ} \psi(\half\tr(-\ol yx-(\ol{cx+ay})(-dx-by)))\tilde f(dx+by).
\end{equation}
\begin{xca}
  Show that, for any $\tilde f\in L^2(G)$,
  \begin{equation*}
    (\rho(\sigma)\tilde f)(x)=
    \begin{cases}
      \psi(dcN(x)) \tilde f(dx) & \text{ if } b=0,\\
      \frac{1}{q^2}\sum_{y\in \FQ} \psi(\tfrac{dN(x)-\tr(\ol y x)+aN(y)}{b})\tilde f(y) & \text{ otherwise.}
    \end{cases}
  \end{equation*}
\end{xca}
We have already seen that $\rho:SL_2(\Fq)\to GL(L^2(\FQ))$ is a projective representation.
Let $\tilde \rho$ be the modification of $\rho$ by scalars given by
\begin{equation}
  \label{eq:Weil}
  \tilde \rho (\sigma) \tilde f(x)=
    \begin{cases}
      \psi(dcN(x)) \tilde f(dx) & \text{ if } b=0,\\
      -\frac{1}{q}\sum_{y\in \FQ} \psi(\tfrac{dN(x)-\tr(\ol y x)+aN(y)}{b})\tilde f(y) & \text{ otherwise.}
    \end{cases}
\end{equation}
\begin{prop}
  \label{prop:true}
  The function $\tilde \rho: SL_2(\Fq)\to GL(L^2(\Fq))$ defined by (\ref{eq:Weil}) is an ordinary representation.
\end{prop}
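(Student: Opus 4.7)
The plan is to exploit the fact that $\rho$ is already a projective representation, so that $\tilde\rho(\sigma)\tilde\rho(\sigma') = c(\sigma,\sigma')\tilde\rho(\sigma\sigma')$ for some scalar cocycle $c$; proving the result reduces to showing $c \equiv 1$. Since $\tilde\rho$ agrees with $\rho$ on matrices with $b=0$ and equals $-q\cdot \rho$ on matrices with $b\neq 0$, it suffices to verify multiplicativity on a generating set of $SL_2(\Fq)$ together with the defining relations. I would use the standard generators $t(a)=\mat a00{a\inv}$ for $a\in \Fq^*$, $u(c)=\mat 10c1$ for $c\in \Fq$, and $w=\mat 01{-1}0$.

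Reading off from (\ref{eq:Weil}), one computes $\tilde\rho(t(a))\tilde f(x) = \tilde f(a\inv x)$, $\tilde\rho(u(c))\tilde f(x) = \psi(cN(x))\tilde f(x)$, and $\tilde\rho(w)\tilde f(x) = -q\inv\sum_{y\in \FQ}\psi(-\tr(\ol yx))\tilde f(y)$. Multiplicativity on each one-parameter subgroup $\{t(a)\}$ and $\{u(c)\}$, together with the conjugation relations $t(a)u(c)t(a)\inv = u(a^{-2}c)$ and $wt(a)w\inv = t(a\inv)$, follows immediately from $N(ax)=a^2N(x)$ for $a\in \Fq$ and a change of variable in the sum defining $\tilde\rho(w)$. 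The relation $w^2=t(-1)$ is a short Fourier-inversion argument: iterating the kernel yields $q^{-2}\sum_{y,z}\psi(-\tr(\ol y(x+z)))\tilde f(z)$, and the inner $y$-sum collapses to $q^2\delta_{z,-x}$ by orthogonality of the additive characters of $\FQ$ (which uses the self-duality $\hat{\FQ}\cong \FQ$ via $y\mapsto \psi\circ\tr(\ol y\cdot)$), giving $\tilde f(-x) = \tilde\rho(t(-1))\tilde f(x)$.

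The main obstacle is the remaining Bruhat-type relation, namely that for any $\sigma = \mat abcd$ with $b\neq 0$, the single-sum formula (\ref{eq:Weil}) must coincide with the composite of the generator formulas corresponding to the Bruhat factorization $\sigma = u(d/b)\,w\,t(b\inv)\,u(a/b)$. Expanding the latter produces a double sum over $\FQ$; collapsing the inner sum reduces to the Gauss-sum identity
\begin{equation*}
  \sum_{y\in \FQ}\psi(\alpha\tr(N(y))) = -q, \qquad \alpha\in \Fq^*,
\end{equation*}
which is proved by noting that $N:\FQ^*\to \Fq^*$ is surjective with every fiber of size $q+1$, so the sum equals $1+(q+1)\sum_{u\in\Fq^*}\psi(2\alpha u) = 1-(q+1) = -q$. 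It is precisely this value $-q$ (not $+q$ or anything else) that forces the normalization constant $-1/q$ in (\ref{eq:Weil}); any other scalar would leave behind a residual cocycle, producing only a projective representation. Once this identity is in place, the Bruhat relation is verified on generators and $\tilde\rho$ is a genuine representation.
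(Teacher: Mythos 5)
Your overall strategy (reduce everything to generators via the Bruhat decomposition) is a legitimate alternative to the paper's argument, which instead evaluates both $\tilde\rho(\sigma)\tilde\rho(\sigma')1_0$ and $\tilde\rho(\sigma\sigma')1_0$ at $0$ for the indicator function $1_0$ of $\{0\}$ and pins the ambiguous scalar down to $1$ by a short case analysis on which of $b,b',b''$ vanish. But as written your argument has a structural gap. The relations you propose to check ($t$ and $u$ multiplicative in their parameters, $t(a)u(c)t(a)\inv=u(a^{-2}c)$, $wt(a)w\inv=t(a\inv)$, $w^2=t(-1)$) are not a presentation of $SL_2(\Fq)$; the group they present is far larger. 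The missing relation is the Steinberg-type relation
\begin{equation*}
  w\,u(c)\,w=u(-c\inv)\,w\,u(-c)\,t(c\inv)\qquad(c\neq 0),
\end{equation*}
equivalently the operator identity $\tilde\rho(w)\tilde\rho(u(c))\tilde\rho(w)=\tilde\rho\mat{-1}{c}{0}{-1}$, and this is precisely the one that cannot be checked ``immediately''.

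Moreover, the step you single out as the main obstacle --- that (\ref{eq:Weil}) for a big-cell element coincides with the composite of the generator formulas along its Bruhat factorization --- is in fact routine: of the four factors $u(d/b)$, $w$, $t(b\inv)$, $u(a/b)$ only $w$ contributes a sum, so the composite is a \emph{single} sum, the verification is bookkeeping of exponents, and it holds for \emph{any} choice of the normalizing constant in front of the sum (both sides carry that constant to the first power). It therefore cannot be the step that forces the value $-1/q$, and no Gauss sum appears in it. The double sum and the identity $\sum_{y\in\FQ}\psi(\alpha N(y))=-q$ for $\alpha\in\Fq^*$ enter exactly in the Steinberg relation above: there the left-hand side carries the prefactor $(-1/q)^2$ times a Gauss sum worth $-q$, while the right-hand side carries $(-1/q)^1$, and this is what determines the normalization --- the same cancellation the paper performs on the test vector $1_0$. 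So you do have the correct key identity, but it is attached to the wrong step. To close the argument you must either verify the Steinberg relation directly by collapsing the double sum, or verify $\tilde\rho(g)\tilde\rho(h)=\tilde\rho(gh)$ for each generator $g$ and \emph{arbitrary} $h$ (the case $g=w$ with $h$ in the big cell being the nontrivial one) and then conclude by induction on word length.
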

\begin{proof}
  Suppose $\sigma=\mat abcd$, $\sigma'=\mat{a'}{b'}{c'}{d'}$, and $\sigma''=\mat{a''}{b''}{c''}{d''}$ are elements of $SL_2(\Fq)$ such that $\sigma''=\sigma\sigma'$.
  Let $1_0\in L^2(\FQ)$ denote the indicator function of $\{0\}$.
  In the case that $b$, $b'$ and $b''$ are all non-zero, we have
  \begin{equation*}
    \tilde \rho(\sigma'')1_0(0) = -\frac 1q.
  \end{equation*}
  On the other hand,
  \begin{equation*}
    (\tilde \rho(\sigma')1_0)(x)=-\tfrac 1q\psi(d'{b'}\inv N(x)).
  \end{equation*}
  Therefore,
  \begin{eqnarray}
    \nonumber
    (\tilde \rho (\sigma) \tilde \rho (\sigma') 1_0)(0) & = & -\frac 1q \sum_{y\in \FQ} \psi(\tr (ab\inv N(y)))(-\tfrac 1q \psi(d'{b'}\inv N(y)))\\
    \label{eq:expr}
    & = & \frac 1{q^2} \sum_{y\in \FQ} \psi(\tr (ab\inv+d'{b'}\inv) N(y)))\\
    \nonumber
    & = & \frac 1{q^2}\bigg(1 + \sum_{y\in \FQ^*} \psi(\tr (b''b\inv{b'}\inv N(y))\bigg).
  \end{eqnarray}
  Now the norm map $N:\FQ^*\to \Fq^*$ is surjective, and takes each value $q+1$ times (Exercise~\ref{xca:norm}).
  Therefore, as $y$ ranges over $\FQ^*$, $b''b\inv{b'}\inv N(y)$ ranges over $\Fq^*$ $(q+1)$ times. We get
  \begin{eqnarray*}
    \sum_{x\in \FQ^*} \psi(\tr (b''b\inv{b'}\inv N(x))) & = & (q+1)\sum_{u\in \Fq^*} \psi(\tr(u))\\
    & = & (q+1)\sum_{u\in \Fq} \psi(\tr(u)) - (q+1)\\
    & = & -(q+1).
  \end{eqnarray*}
  Therefore,
  \begin{eqnarray*}
    (\tilde \rho(\sigma) \tilde \rho(\sigma') 1_0)(0) & = & \frac{1}{q^2}(1-(q+1))\\
    & = & -\frac 1q.
  \end{eqnarray*}
  We already know that $\tilde \rho(\sigma'')$ and $\tilde \rho(\sigma)\tilde \rho(\sigma')$ differ by a scalar multiple.
  It follows from the above calculations that this scalar multiple is $1$.

  If $b$ and $b'$ are non-zero, but $b''=0$, then $d'{b'}\inv+ab\inv =0$, and the expression (\ref{eq:expr}) equals $1$, which is also the value of $\tilde\rho(\sigma'')1_0(0)$.
  Again, it follows that $\tilde\rho(\sigma'')=\tilde\rho(\sigma)\tilde\rho(\sigma')$.

  When exactly one of $b$ and $b'$ is $0$, then $b''\neq 0$.
  In these cases, $\tilde\rho(\sigma)\tilde\rho(\sigma')=\tilde\rho(\sigma\sigma')=-\tfrac 1q$.
\end{proof}
\begin{xca}
  For $a\in \Fq^*$, let $t(a)=\mat a00{a\inv}$, let $w=\mat 01{-1}0$ and for $c\in \Fq$, let $u(c)=\mat 10c1$.
  Use (\ref{eq:Weil}) to show that for every $\tilde f\in L^2(\FQ)$, 
  \begin{gather}
    \label{eq:Wt}
    (\tilde\rho(t(a))\tilde f)(x) = \tilde f(a\inv x),\\
    \label{eq:Ww}
    (\tilde\rho(w)\tilde f)(x)=\tfrac{-1}q\FT \tilde f(x),\\
    \label{eq:Wu}
    (\tilde\rho(u(c))\tilde f)(x) = \psi(cN(x))\tilde f(x).
  \end{gather}
  Here, the Fourier transform of $\tilde f\in L^2(\FQ)$ is once again thought of as a function of $\FQ$, since $\FQ$ is identified with its Pontryagin dual.
  Explicitly,
  \begin{equation*}
    \FT \tilde f(x)=\sum_{y\in \FQ} \tilde f(y)\psi(\tr(-\ol y x)).
  \end{equation*}
\end{xca}
\begin{xca}
  \label{xca:gens}
  Any element of $SL_2(\Fq)$ can be written as a product of elements of the above types.
  Consider the matrix $\mat abcd \in SL_2(\Fq)$.
  If $b=0$, then $d=a\inv$ and $\mat a0cd=t(a)u(ac)$.
  On the other hand, if $b\neq 0$, then
  $\mat abcd = u(d/b)wu(ab)t(b\inv)$.
\end{xca}
\section{The degrees of cuspidal representations}
\label{sec:degrees_cuspidal_representation}
In Chapter~\ref{chap:induced_representations} we constructed all the representations $(\pi,V)$ of $GL_2(\Fq)$ for which 
\begin{equation*}
  \Hom_{GL_2(\Fq)}(\pi,I(\chi_1,\chi_2))\neq 0 \mbox{ for some characters } \chi_1, \chi_2\in \hat{\Fq^*}.
\end{equation*}
Thus for the representations that remain,
\begin{equation}
  \label{eq:cuspidal}
  \Hom_{GL_2(\Fq)}(\pi,I(\chi_1,\chi_2))=0 \mbox{ for all characters } \chi_1, \chi_2\in \hat{\Fq^*}.
\end{equation}
Representations $(\pi,V)$ satisfying \eqref{eq:cuspidal} are known as the \emph{cuspidal} representations of $GL_2(\Fq)$.
By Frobenius reciprocity (Section~\ref{sec:induced}), we have
\begin{equation*}
  \Hom_B(\pi_B,\chi)=0 \mbox{ for all characters } \chi:B\to \C^* \mbox{ such that } \chi_{|N}\equiv 1.
\end{equation*}

Given a representation $(\pi,V)$  of any group $G$, let $V^*$ be the dual space $\Hom_\C(V,\C)$ of $V$.
Let $\pi^*$ be the representation of $G$ on $V^*$ given by
\begin{equation*}
  (\pi^*(g)\xi)(\mathbf{v})=\xi(\pi(g\inv)\mathbf{v}).
\end{equation*}
The representation $(\pi^*,V^*)$ is called the \emph{contragredient} of $(\pi,V)$.

\begin{prop}
  \label{prop:Jacquet}
  A representation $(\pi,V)$ of $GL_2(\Fq)$ is cuspidal if and only if there exists no non-zero vector $\xi\in V^*$ such that
  \begin{equation}
    \label{eq:invariance}
    \pi^*(n)\xi=\xi \mbox{ for all } n\in N.
  \end{equation}
\end{prop}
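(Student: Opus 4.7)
The plan is to use Frobenius reciprocity (which the excerpt has already invoked) to rephrase cuspidality purely in terms of $N$-isotypic behavior on $V^*$, and then exploit the fact that $T$ normalizes $N$ together with the semisimplicity of representations of the abelian group $T$ over $\C$.

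First I would set up the correspondence. A nonzero element of $\Hom_B(\pi_B,\chi)$, for $\chi$ a character of $B$ realized on $\C$, is the same thing as a nonzero linear functional $\xi:V\to\C$ satisfying $\xi(\pi(b)\vec v)=\chi(b)\xi(\vec v)$ for all $b\in B,\vec v\in V$; equivalently, $\pi^*(b)\xi=\chi(b\inv)\xi$. When $\chi_{|N}\equiv 1$, such a $\xi$ is automatically $N$\dash invariant under $\pi^*$. Combined with the reformulation of cuspidality already given in the excerpt (via Frobenius reciprocity, cuspidality says $\Hom_B(\pi_B,\chi)=0$ for every character $\chi$ of $B$ trivial on $N$), this immediately gives the easy direction: if no nonzero $N$\dash invariant vector exists in $V^*$, then no intertwiner to any $\chi$ as above exists, so $\pi$ is cuspidal.

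For the other direction I would argue by contrapositive. Suppose the space
\begin{equation*}
  W=\{\xi\in V^*\;|\:\pi^*(n)\xi=\xi\text{ for all }n\in N\}
\end{equation*}
is nonzero. Since $T$ normalizes $N$, $W$ is stable under $\pi^*(T)$: for $\xi\in W$, $t\in T$, $n\in N$, writing $nt=t(t\inv nt)$ with $t\inv nt\in N$ gives $\pi^*(n)\pi^*(t)\xi=\pi^*(t)\pi^*(t\inv nt)\xi=\pi^*(t)\xi$. The group $T$ is finite abelian and $W$ is a finite\dash dimensional complex vector space, so by standard character theory $W$ decomposes as a direct sum of $T$\dash eigenspaces. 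Pick a nonzero $\xi\in W$ with $\pi^*(t)\xi=\mu(t)\xi$ for all $t\in T$, for some character $\mu$ of $T$.

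Finally, using $B=T\ltimes N$ (Exercise~\ref{xca:subgr-upper-triang}), define the character $\chi$ of $B$ by $\chi(tn)=\mu(t)\inv$; this is trivial on $N$. For every $b=tn\in B$,
\begin{equation*}
  \pi^*(b)\xi=\pi^*(t)\pi^*(n)\xi=\mu(t)\xi=\chi(b\inv)\xi,
\end{equation*}
so $\xi$ gives a nonzero element of $\Hom_B(\pi_B,\chi)$, contradicting cuspidality. I do not expect a genuine obstacle in this argument: the only nontrivial ingredient is the diagonalization of the $T$-action on $W$, which is entirely formal. The main thing to be careful about is bookkeeping with the contragredient action and with the inverse that comes from passing between $\pi$\dash covariance of $\xi$ and the $\pi^*$\dash action on $\xi$.
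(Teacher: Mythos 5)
Your argument is correct and follows essentially the same path as the paper's proof: one direction is immediate from the definition of the contragredient, and for the converse both you and the paper observe that $T$ normalises $N$, so the $N$\dash invariant subspace of $V^*$ is $T$\dash stable, decompose it into $T$\dash eigenspaces, and extract a nonzero element of $\Hom_B(\pi_B,\chi)$ for a suitable character $\chi$ of $B$ trivial on $N$. The only difference is cosmetic: you track the inverse coming from the contragredient (writing $\chi(tn)=\mu(t)\inv$), whereas the paper glosses over this harmless relabelling.
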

\begin{proof}
  Suppose $(\pi,V)$ is not cuspidal.
  Then there exists a non-zero element $\xi\in \Hom_B(V,\chi)$ for some $\chi:B\to \C^*$ such that $\chi_{|N}\equiv 1$.
  Such a $\xi$ can be regarded as an element of $V^*$.
  We have, for any $n\in N$ and $\mathbf{v}\in V$,
  \begin{eqnarray*}
    (\pi^*(n)\xi)(\mathbf{v}) & = & \xi(\pi(n\inv)\mathbf{v})\\
    & = & \xi(\chi(n)\mathbf{v})\\
    & = & \xi(\mathbf{v}),
  \end{eqnarray*}
  so that $\xi$ satisfies \eqref{eq:invariance}.
  
  Conversely, look at the space $V^{*N}$ of all vectors in $V^*$ satisfying \eqref{eq:invariance}.
  This space is preserved under the action of $T$ (since $tNt\inv=N$ for all $t\in T$).
  Therefore, one can write
  \begin{equation*}
    V^{*N}=\bigoplus_{\chi\in \hat{T}} V^{*N}_\chi,
  \end{equation*}
  where $V^{*N}_\chi$ is the space of vectors $\mathbf{v}\in V^{*N}$ which transform under $T$ by $\chi$.
  If $V^{*N}\neq 0$, then there exists $\chi$ such that $V^{*N}_\chi\neq 0$.
  Therefore, $\Hom_B(V,\chi)\neq 0$, from which it follows that $(\pi,V)$ is not cuspidal.
\end{proof}
\begin{xca}
  Show that $(\pi,V)$ is cuspidal if and only if $(\pi^*,V^*)$ is cuspidal.
\end{xca}
\begin{cor}
  \label{cor:cuspidal_degree}
  The degree of every cuspidal representation of $GL_2(\Fq)$ is always a multiple of $(q-1)$.
\end{cor}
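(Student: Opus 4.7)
The plan is to use Proposition~\ref{prop:Jacquet} to translate cuspidality into a statement about $N$-invariants, decompose the restriction of $\pi$ to the abelian subgroup $N$ into character isotypic components, and then exploit the conjugation action of $T$ on $N$ to show that the non-trivial isotypic components all have the same dimension. Since there are exactly $q-1$ non-trivial characters of $N$, the conclusion will follow.

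First, I would replace the criterion given in Proposition~\ref{prop:Jacquet} (phrased on $V^*$) by the equivalent statement on $V$ itself: namely, $(\pi,V)$ is cuspidal iff $V^N=0$. This uses the exercise that $\pi$ is cuspidal iff $\pi^*$ is cuspidal, together with the fact that for any representation of a finite abelian group, the isotypic component for a character $\chi$ in the dual is the dual of the isotypic component for $\chi^{-1}$ in the original space; in particular the trivial isotypic component of $V^*$ vanishes iff that of $V$ vanishes. So from now on, cuspidality means $V_{\mathbf 1}=0$ in the decomposition
\begin{equation*}
  V=\bigoplus_{\psi'\in\hat N} V_{\psi'}, \qquad V_{\psi'}=\{v\in V : \pi(n)v=\psi'(n)v\text{ for all }n\in N\}.
\end{equation*}

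Next I would use that $T$ normalises $N$, so $\pi(t)$ carries $V_{\psi'}$ isomorphically onto $V_{{}^t\psi'}$, where ${}^t\psi'(n)=\psi'(t\inv n t)$. A direct computation shows
\begin{equation*}
  \mat a00d \mat 1x01 \mat a00d\inv = \mat 1{ax/d}01,
\end{equation*}
so the conjugation action of $T\cong \Fq^*\times \Fq^*$ on $N\cong\Fq$ factors through the character $(a,d)\mapsto a/d$ and is therefore transitive on $N\setminus\{0\}$. Dualising, $T$ acts transitively on the $q-1$ non-trivial characters of $N$.

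Putting the pieces together, all the spaces $V_{\psi'}$ with $\psi'\neq \mathbf 1$ have the same dimension, say $m$. By cuspidality $V_{\mathbf 1}=0$, and hence $\dim V=(q-1)m$, which proves the corollary. The only delicate point is the first step, the passage from Proposition~\ref{prop:Jacquet} (which is stated for $V^*$) to the corresponding statement for $V$; once that equivalence is in hand, the rest of the argument is essentially bookkeeping with isotypic components and the explicit computation of the $T$\nobreakdash-action on $N$.
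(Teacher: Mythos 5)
Your proof is correct and takes essentially the same approach as the paper: decompose into $N$-isotypic components, use the transitivity of the $T$-conjugation action on the non-trivial characters of $N$, and invoke cuspidality to kill the trivial component. The one difference is that the paper works directly with $V^*$, where Proposition~\ref{prop:Jacquet} already lives, whereas you take an extra (correct, but avoidable) step of dualizing back to $V$ before running the argument; the paper also only needs the one-parameter subgroup $\mat t001$ rather than the full torus.
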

\begin{proof}
  Suppose that $(\pi,V)$ is a cuspidal representation.
  For each $a\in \Fq$, let $V^*_a$ be the space of all $\xi\in V^*$ such that
  \begin{equation*}
    \pi^*\mat{1}{x}{0}{1}\xi = \psi(ax)\xi.
  \end{equation*}
  Then the map
  \begin{equation*}
    \xi \mapsto \pi^*\mat{t}{0}{0}{1}\xi
  \end{equation*}
  is an isomorphism of $V^*(a)$ with $V^*(ta)$ for all $t\in \Fq^*$.
  Hence for $a\neq 0$, the $q-1$ spaces $V^*(ta)$, with $t\in \Fq^*$ have the same dimension.
  The space $V^*(0)$ is just $V^*(N)$, hence is trivial.
  Therefore the dimension of $V^*$, hence the degree of $V$ must be a multiple of $q-1$.
\end{proof}
From Corollary \ref{cor:cuspidal_degree} and the discussion at the end of Section \ref{sec:induced_representations} it follows that besides the representations constructed in that section, there are exactly $\half(q^2-q)$ irreducible cuspidal representations, each of degree $q-1$.
These representations are constructed in Section~\ref{sec:constr-cusp-repr}.

A cuspidal representation of $SL_2(\Fq)$ can be defined in a similar manner.
A representation $(\pi,V)$ of $SL_2(\Fq)$ is said to be cuspidal if
\begin{equation*}
  \Hom_{SL_2(\Fq)}(\pi,I(\chi))=0\text{ for all characters } \chi \in \in \Fq^*.
\end{equation*}
\begin{xca}
  Verify that Proposition~\ref{prop:Jacquet} continues to hold when $GL_2(\Fq)$ is replaced by $SL_2(\Fq)$.
\end{xca}
However, Corollary~\ref{cor:cuspidal_degree} does not hold as stated
\begin{xca}
  \label{xca:cuspidal-degree-sl2}
  Show that the degree of a cuspidal representation of $SL_2(\Fq)$ is always a multiple of $\frac{q-1}2$.
\end{xca}

\section{Construction of cuspidal representations of $GL_2(\Fq)$}
\label{sec:constr-cusp-repr}
Let $\omega$ be a character of $\FQ^*$ such that $\omega\neq \chi\circ N$ for any character $\chi$ of $\Fq^*$ (here $N$ denotes the norm map $\FQ\to \Fq$).
Such a character is called \emph{primitive}\index{myindex}{primitive character}.
\begin{xca}
  Show that there are $q^2-q$ such characters.
\end{xca}
Let
\begin{equation*}
  (\FQ^*)_1=\{y\in \FQ^* \;|\: N(y)=1\}.
\end{equation*}
\begin{xca}
  \label{xca:primitive}
  Show that a character $\omega:\FQ^*\to \C^*$ is primitive if and only if its restriction to $(\FQ^*)_1$ is non-trivial.
\end{xca}
Define
\begin{equation*}
  W_\omega=\{\tilde f\in L^2(\FQ) \;|\: \tilde f(yx)=\omega(y)\inv \tilde f(x) \mbox{ for all } y\in (\FQ^*)_1\}.
\end{equation*}
\begin{xca}
  Show that $W_\omega$ is preserved by the action of $\tilde\rho(\sigma)$ for every $\sigma\in SL_2(\Fq)$. [Hint: note that if $N(x)=1$, then $\ol x=x\inv$.]
\end{xca}
Therefore, $\tilde\rho$ gives a representation $(\pi_\omega, W_\omega)$ for each such $\omega$.
For any $x\in \FQ$, the set of elements $x'$ such that $N(x')=N(x)$ coincides with the set of elements of the form $x''x$, where $x''\in (\FQ^*)_1$.
Hence, if $f\in W_\omega$, then the value of $\tilde f$ at $x$ determines the value of $\tilde f$ at any element $x'$ with $N(x')=N(x)$.
However, if $x=0$, there is an additional constraint, namely that $\tilde f(0)=\omega(y)\inv \tilde f(0)$ for every $y\in (\FQ^*)_1$.
By Exercise~\ref{xca:primitive}, if $\omega$ is primitive, then it is forced that $\tilde f(0)=0$.
Since there are $q-1$ non-zero values for the norm, we have
\begin{lemma}
  \label{lemma:dimension}
  When $\omega$ is primitive $W_\omega$ has dimension $q-1$.
  For every $\tilde f\in W_\omega$, $\tilde f(0)=0$.
\end{lemma}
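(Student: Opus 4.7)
The plan is to verify the two claims in turn, using the defining relation $\tilde f(yx)=\omega(y)^{-1}\tilde f(x)$ for $y\in (\FQ^*)_1$.

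First I would establish the vanishing at $0$. Substituting $x=0$ into the defining relation gives $\tilde f(0)=\omega(y)^{-1}\tilde f(0)$ for every $y\in (\FQ^*)_1$. By Exercise~\ref{xca:primitive}, primitivity of $\omega$ is exactly the statement that $\omega|_{(\FQ^*)_1}$ is non-trivial, so there is some $y_0\in (\FQ^*)_1$ with $\omega(y_0)\ne 1$; this forces $\tilde f(0)=0$, giving the second assertion.

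Next I would compute the dimension by a coset argument. The group $(\FQ^*)_1$ acts on $\FQ^*$ by multiplication, and the orbits are precisely the fibers of the norm map $N\colon\FQ^*\to\Fq^*$. Since $N$ is surjective with kernel $(\FQ^*)_1$ of order $(q^2-1)/(q-1)=q+1$, there are exactly $q-1$ orbits on $\FQ^*$, one for each value in $\Fq^*$. Pick a representative $x_a\in \FQ^*$ with $N(x_a)=a$ for each $a\in\Fq^*$.

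For $\tilde f\in W_\omega$, the value $\tilde f(x_a)$ can be chosen freely in $\C$: then the defining relation forces $\tilde f(yx_a)=\omega(y)^{-1}\tilde f(x_a)$ for all $y\in (\FQ^*)_1$, and this rule is well defined because $x_a\ne 0$ implies $yx_a=y'x_a\Rightarrow y=y'$, so no inconsistency arises on a given orbit. Combined with the forced value $\tilde f(0)=0$, this exhibits a linear isomorphism $W_\omega\to\C^{q-1}$ sending $\tilde f$ to $(\tilde f(x_a))_{a\in\Fq^*}$, so $\dim W_\omega=q-1$.

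No step here is a genuine obstacle; the only place the hypothesis of primitivity is used is in the first step to kill $\tilde f(0)$, and everything else reduces to counting orbits of $(\FQ^*)_1$ on $\FQ^*$ via the norm map.
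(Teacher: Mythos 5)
Your proof is correct and follows essentially the same route as the paper: establish $\tilde f(0)=0$ using primitivity via Exercise~\ref{xca:primitive}, then observe that the $(\FQ^*)_1$-orbits on $\FQ^*$ are the $q-1$ nonempty fibers of the norm map, on each of which $\tilde f$ is freely determined by a single value. You are a bit more explicit than the paper about checking consistency of the definition on each orbit, but the underlying argument is identical.
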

Each matrix $\sigma$ in $GL_2(\Fq)$ can be written in a unique way as a product of $\mat{1}{0}{0}{\det(\sigma)}$ and a matrix in $SL_2(\Fq)$.
Define
\begin{equation}
  \label{eq:extension}
  \left(\tilde\rho\mat{1}{0}{0}{a}\tilde f\right)(x)=\omega(\tilde a)\tilde f(\tilde a x),
\end{equation}
where $\tilde a\in \FQ^*$ is chosen so that $N(\tilde a)=a$.
\begin{xca}
  Check that the right hand side of \eqref{eq:extension} does not depend on the choice of $\tilde a$ such that $N(\tilde a)=a$, and that it preserves $W_\omega$ for each primitive $\omega$.
\end{xca}
Extend $\pi_\omega$ to $GL_2(\Fq)$ by $\tilde\rho\left(\mat 100a \sigma\right)=\tilde\rho\mat 100a \tilde\rho(\sigma)$.
For this extended function to be a homomorphism of groups, it is necessary that, for all $a,a'\in \Fq^*$ and all $\sigma,\sigma'\in SL_2(\Fq)$,
\begin{equation}
  \label{eq:homomorphism}
  \tilde\rho\left(\mat{1}{0}{0}{a} \sigma \mat{1}{0}{0}{a'} \sigma'\right)=\tilde\rho\left(\mat{1}{0}{0}{a}\sigma\right)\tilde\rho\left(\mat{1}{0}{0}{a'}\sigma'\right).
\end{equation}
But 
\begin{equation*}\mat{1}{0}{0}{a}\sigma\mat{1}{0}{0}{a'}\sigma'=\mat{1}{0}{0}{aa'}\left[\mat{1}{0}{0}{{a'}\inv}\sigma\mat{1}{0}{0}{a'}\sigma'\right],
\end{equation*}
and $\mat{1}{0}{0}{{a'}\inv}\sigma\mat{1}{0}{0}{a'}\sigma'\in SL_2(\Fq)$.
\begin{xca}
Using this to expand both sides of \eqref{eq:homomorphism} in terms of \eqref{eq:extension}, show that it is sufficient to check that for each $a\in \Fq^*$, $f\in L^2(\FQ)$ and each element $\sigma$ of $SL_2(\Fq)$,
\begin{equation}
  \label{eq:straightforward}
  \tilde\rho\mat{1}{0}{0}{a}\tilde\rho(\sigma)\tilde \rho\mat{1}{0}{0}{a}\inv = \tilde\rho\left(\mat{1}{0}{0}{a}\sigma\mat{1}{0}{0}{a}\inv\right).
\end{equation}
\end{xca}
\begin{xca}
  Verify (\ref{eq:straightforward}) for $\sigma$ of the form $t(a)$, $w$ and $u(c)$ (see Exercise~\ref{xca:gens}).
  Conclude that it holds for all $\sigma\in SL_2(\Fq)$.
\end{xca}
We will denote again by $(\pi_\omega,W_\omega)$ the restriction of $\tilde \rho$ to the subspace $W_\omega$.
\begin{prop}
  For every primitive character $\omega$, the representation $(\pi_\omega,W_\omega)$ is cuspidal.
\end{prop}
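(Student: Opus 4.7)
The plan is to apply Proposition~\ref{prop:Jacquet}. Combined with the exercise immediately following it (that cuspidality of $\pi$ and of $\pi^*$ are equivalent), this reduces the problem to showing $W_\omega^N = 0$, where $N = \{\mat 1x01 : x \in \Fq\}$ is the upper triangular unipotent subgroup and the action is through $\tilde\rho$.

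Acting directly by $\tilde\rho\mat 1x01$ is awkward because equation~(\ref{eq:Weil}) gives a Fourier-transform-like kernel whenever the off-diagonal entry $b$ is non-zero. So I would first conjugate to the opposite unipotent. Since $w \in SL_2(\Fq)$, the operator $\tilde\rho(w)$ is invertible on $W_\omega$, and from $\tilde\rho(n')\tilde\rho(w) = \tilde\rho(w)\tilde\rho(w\inv n' w)$ one sees that $\tilde\rho(w)$ restricts to an isomorphism $W_\omega^N \tilde\to W_\omega^{wNw\inv}$. A direct matrix computation shows $wNw\inv = \{u(c) : c \in \Fq\}$ in the notation of Exercise~\ref{xca:gens}, and on this opposite unipotent the Weil representation acts by the much simpler diagonal formula~(\ref{eq:Wu}): $(\tilde\rho(u(c))\tilde f)(x) = \psi(cN(x))\tilde f(x)$.

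It then remains to analyze this fixed-point space. If $\tilde f \in W_\omega$ is fixed by every $\tilde\rho(u(c))$, then $\psi(cN(x))\tilde f(x) = \tilde f(x)$ for all $c \in \Fq$ and all $x \in \FQ$. For any $x$ with $\tilde f(x) \ne 0$, the map $c \mapsto \psi(cN(x))$ would have to be the trivial character of $\Fq$; since $\psi$ is non-trivial, this forces $N(x) = 0$, hence $x = 0$. So $\tilde f$ must be supported at $0$. But Lemma~\ref{lemma:dimension} (and this is the only place where primitivity of $\omega$ enters) forces $\tilde f(0) = 0$. Hence $\tilde f \equiv 0$, so $W_\omega^{wNw\inv} = 0$, and therefore $\pi_\omega$ is cuspidal.

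I do not anticipate any serious obstacle: the ingredients (Jacquet's criterion, the explicit Weil formulas, and the vanishing at $0$ from primitivity) all do essentially one step each. The only thing that requires a moment of thought is recognising that one should swap $N$ for its opposite before writing down the invariance equation, so that it becomes a pointwise condition rather than a kernel identity.
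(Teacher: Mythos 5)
Your proposal is correct and matches the paper's own argument: the paper likewise shows that $W_\omega$ has no non-zero vectors fixed by the lower unipotent $\ol N=\{u(c)\}$ (using $\tilde\rho(w)$ to pass between $N$-fixed and $\ol N$-fixed vectors), invokes Lemma~\ref{lemma:dimension} for $\tilde f(0)=0$, and uses the pointwise formula (\ref{eq:Wu}) plus non-triviality of $\psi$ to kill $\tilde f$ on $\FQ^*$. You are slightly more explicit about the reduction via Proposition~\ref{prop:Jacquet} and the $\pi\leftrightarrow\pi^*$ exercise, but the content is identical.
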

\begin{proof}
  We will show that $W_\omega$ contains no non-zero vectors fixed by $\ol N$, the subgroup consisting of matrices of the form $\mat 10c1$, $c\in \Fq$.
  This suffices, for $\tilde f$ is fixed by $N$ if and only if $\pi_\omega(w)\tilde f$ is fixed by $\ol N$.
  Suppose that $\tilde f_0$ is a vector fixed by $\ol N$.
  By Lemma~\ref{lemma:dimension}, $\tilde f_0(0)=0$.
  On the other hand, if $x\in \FQ^*$, then choose $c\in \Fq$ so that $\psi(cN(x))\neq 1$.
  Then, by (\ref{eq:Wu})
  \begin{equation*}
    \tilde f_0(x)=\left(\tilde\rho\mat{1}{0}{c}{1}\tilde f_0\right)(x)=\psi(cN(x))\tilde f_0(x),
  \end{equation*}
  we have $\tilde f_0(x)=0$.
\end{proof}
Clearly, any sub-representation of a cuspidal representation is also cuspidal.
Therefore, by Corollary~\ref{cor:cuspidal_degree} $(\pi_\omega,W_\omega)$ is simple for each $\omega$ of the type considered above.
\begin{lemma}
  \label{lemma:distinctness}
  Let $\omega$ and $\eta$ be two characters of $\FQ^*$ as above.
  If the representations $(\pi_\omega,W_\omega)$ and $(\pi_\eta,W_\eta)$ are isomorphic, then either $\omega=\eta$ or $\omega=\eta\circ F$, where $F$ is the Frobenius automorphism $\FQ^*\to \FQ^*$ (see Section~\ref{sec:Frobenius_norms_traces}).
\end{lemma}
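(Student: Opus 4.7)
The strategy is to decompose $W_\omega$ under the unipotent subgroup $\overline{N} = \{u(c) : c \in \Fq\}$, which yields a canonical one-dimensional eigenspace decomposition indexed by $\Fq^*$, and then to track how the remaining generators of $GL_2(\Fq)$ act on the resulting basis. Intertwining identities for various generators will then force $\omega/\eta$ to be one of two explicit characters, giving the dichotomy in the statement.

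By \eqref{eq:Wu}, $\pi_\omega(u(c))$ acts on $W_\omega$ as multiplication by $\psi(cN(x))$. Hence $W_\omega = \bigoplus_{a \in \Fq^*} V_a^\omega$, where $V_a^\omega \subset W_\omega$ consists of functions supported on $N^{-1}(a)$; the $(\FQ^*)_1$-equivariance defining $W_\omega$ together with Lemma~\ref{lemma:dimension} makes each $V_a^\omega$ one-dimensional and kills the $a = 0$ summand. Since the characters $c \mapsto \psi(ca)$ of $\overline{N}$ are pairwise distinct for $a \in \Fq^*$, any $GL_2(\Fq)$-intertwiner $T \colon W_\omega \to W_\eta$ must restrict to an isomorphism $V_a^\omega \to V_a^\eta$. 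For each $a \in \Fq^*$, fix a basepoint $y_a \in N^{-1}(a)$ and let $f_a^\omega \in V_a^\omega$ be the unique element with $f_a^\omega(y_a) = 1$; define $f_a^\eta$ analogously. Then $T(f_a^\omega) = c_a f_a^\eta$ for unique scalars $c_a \in \C^*$.

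Next I would compute the actions of $\mat{1}{0}{0}{b}$ and $t(b) = \mat{b}{0}{0}{b^{-1}}$ in this basis, using \eqref{eq:extension} and \eqref{eq:Wt}. These generators shuffle the eigenspaces: $\mat{1}{0}{0}{b}$ sends $V_a^\omega$ to $V_{a/b}^\omega$ and $t(b)$ sends $V_a^\omega$ to $V_{b^2 a}^\omega$, each time scaled by an explicit value of $\omega$ at an element of $\FQ^*$ built from the $y_a$'s. The intertwining identity $T\pi_\omega(g) = \pi_\eta(g)T$ for $g = \mat{1}{0}{0}{b}$ determines the $c_a$'s up to the single constant $c_1$ in the form $c_a = c_1\,(\omega/\eta)(y_a)$, and the identities for $g = t(b)$ then force $\omega$ and $\eta$ to agree on $\Fq^*$—consistent with the central character computation obtained from $\mat{a}{0}{0}{a} = \mat{1}{0}{0}{a^2} t(a)$.

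The main obstacle is squeezing out the last piece of information from the Weyl element $w$. By \eqref{eq:Ww}, $\pi_\omega(w)$ acts as $-q^{-1}$ times the Fourier transform, so its matrix entries in the basis $\{f_a^\omega\}$ are the Gauss-type sums $-q^{-1}\sum_{\lambda \in (\FQ^*)_1} \omega(\lambda)\,\psi(-\tr(\lambda\,\overline{y_a}\,y_b))$. The intertwining identity for $w$ equates ratios of these sums for $\omega$ versus $\eta$ with values of $\omega/\eta$, and primitivity of $\omega$ (Exercise~\ref{xca:primitive}) keeps the relevant sums nonzero. Analyzing the resulting constraint, one finds that $\omega/\eta$ must either be trivial on $\FQ^*$—yielding $\omega = \eta$—or equal the character $\omega \cdot (\omega \circ F)^{-1}$—yielding $\eta = \omega \circ F$, i.e., $\omega = \eta \circ F$. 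This completes the dichotomy in the statement of the lemma.
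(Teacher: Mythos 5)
Your first two paragraphs are correct, and they do take a genuinely different route from the paper's: the paper simply computes the character value $\tr\left(\pi_\omega\mat a10a\right)=-\omega(a)$ for $a\in\Fq^*$, whereas you track an explicit intertwiner through the $\ol N$\dash eigenspace decomposition. Both routes, however, land on exactly the same intermediate fact, namely $\omega|_{\Fq^*}=\eta|_{\Fq^*}$, and this is strictly weaker than the conclusion of the lemma. The restriction map from characters of $\FQ^*$ to characters of $\Fq^*$ has kernel of order $[\FQ^*:\Fq^*]=q+1$, so there are $q+1$ characters $\eta$ with the same restriction as $\omega$, of which only two are $\omega$ and $\omega\circ F$, and primitivity does not eliminate the remaining $q-1$ candidates. (This is also why Exercise~\ref{ex:distinctness}, which the paper's proof invokes at this point, cannot be correct as stated.) Consequently the entire content of the lemma is concentrated in your third paragraph.

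That paragraph is where the gap is. Writing $S_\omega(u)=\sum_{\lambda\in(\FQ^*)_1}\omega(\lambda)\psi(-\tr(\lambda u))$, the intertwining relation for $w$ in your basis reads
\begin{equation*}
  (\omega/\eta)(y_a/y_b)\, S_\omega(\ol{y_b}\,y_a)\;=\;S_\eta(\ol{y_b}\,y_a)\qquad\text{for all }a,b\in\Fq^*,
\end{equation*}
an identity between twisted Kloosterman-type sums over the norm-one circle. ``Analyzing the resulting constraint, one finds\dots'' is an assertion, not an argument: you must actually determine for which characters $\delta=\omega/\eta$ trivial on $\Fq^*$ this system is solvable, and nonvanishing of the sums alone does not do that. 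Since this is precisely the step that separates the two good candidates from the other $q-1$, the proof is incomplete without it. A cleaner way to finish, consistent with your character-theoretic instincts, is the route the paper itself takes for $SL_2(\Fq)$ in Lemma~\ref{lemma:cuspidal-character-value}: compute $\tr(\pi_\omega(C_p))=-\omega(z)-\omega(\ol z)$ for $z$ a root of an irreducible quadratic $p$. Isomorphism of $\pi_\omega$ and $\pi_\eta$ then gives $\omega+\omega\circ F=\eta+\eta\circ F$ as functions on all of $\FQ^*$ (on $\Fq^*$ this is your equality of restrictions), and linear independence of distinct characters of $\FQ^*$ forces $\{\omega,\omega\circ F\}=\{\eta,\eta\circ F\}$, i.e.\ $\eta\in\{\omega,\omega\circ F\}$.
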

\begin{proof}
  For each $u\in \Fq^*$, fix an element $\tilde u\in \FQ$ such that $N(\tilde u)=u$.
  Let $1_u\in W_\omega$ be the unique function such that $1_u(\tilde u)=2$ and $1_u(x)=0$ if $N(x)\neq u$.
  The set $\{1_u\;|\:u\in \Fq^*\}$ is a basis of $W_\omega$.
  Therefore, for any $\sigma\in GL_2(\Fq)$, $\tr(\pi_\omega(\sigma))=\sum_{u\in \Fq^*}(\pi_\omega(\sigma)1_u)(\tilde u)$.

  For any $a\in \FQ^*$, $\mat a01a = \mat a01{a\inv} \mat 100{a^2}$.
  From (\ref{eq:Weil}) and (\ref{eq:extension}), we have that
  \begin{equation*}
    (\pi_\omega\mat a01a 1_u)(\tilde u) = \omega(a)\psi(a\inv u)1_u(\tilde u).
  \end{equation*}
  Therefore,
  \begin{eqnarray*}
    \tr(\pi_\omega\mat a01a) & = & \sum_{u\in \Fq^*} \omega(a) \psi(a\inv u)\\
    & = & \omega(a)\sum_{u\in \Fq^*}\psi(u)\\
    & = & -\omega(a).
  \end{eqnarray*}
  \begin{xca}
    \label{ex:distinctness}
    Show that if $\omega$ and $\eta$ are two characters of $\FQ^*$, then their restrictions to $\Fq^*$ are equal if and only if either $\omega=\eta$ or $\omega=\eta\circ F$.
  \end{xca}
  If $(\pi_\omega,W_\omega)$ and $(\pi_\eta,W_\eta)$ were isomorphic, then we would have
  \begin{equation*}
    \tr\left(\pi_\omega\mat{a}{1}{0}{a}\right)=\tr\left(\pi_\eta\mat{a}{1}{0}{a}\right),
  \end{equation*}
  which by Exercise~\ref{ex:distinctness} would mean that either $\omega=\eta$ or $\omega=\eta\circ F$.
\end{proof}
\section{The cuspidal representations of $SL_2(\Fq)$}
\label{sec:cusp-repr-sl_2fq}
Let $\omega$ be a non-trivial character if $(\Fq^*)_1$, the subgroup of $\Fq^*$ consisting of elements of norm one (there are exactly $q$ such characters).
As in section~\ref{sec:constr-cusp-repr} define 
\begin{equation*}
  W_\omega = \{ \tilde f\in L^2(\FQ)\;|\:\tilde f(yx)=\omega(y)\inv \tilde f(x) \text{ for all } x\in \FQ\}.
\end{equation*}
Each such character $\omega$ can be extended to a primitive character of $\FQ^*$, and therefore, the $W_\omega$'s are the same as the spaces defined in Section~\ref{sec:constr-cusp-repr}, and are invariant under the representation $\tilde \rho$  of $SL_2(\Fq)$ on $L^2(\FQ)$.
Each such representation is of dimension $q-1$.
Let $\pi_\omega$ denote the representation of $SL_2(\Fq)$ on $W_\omega$.
These are just the restrictions of the representations of $GL_2(\Fq)$ constructed in Section~\ref{sec:constr-cusp-repr} to $SL_2(\Fq)$.
It follows that they are cuspidal.
However, it no longer follows that these representations are irreducible, as the degree of a cuspidal representation of $SL_2(\Fq)$ is only known to be a multiple of $\frac{q-1}2$ by Exercise~\ref{xca:cuspidal-degree-sl2}.

We shall analyze the representations $\pi_\omega$ through their characters.
We already know that $\tr(\pi_\omega(\mat 10c1))=-1$ from the proof of Lemma~\ref{ex:distinctness}.
\begin{xca}
  Show that $\tr\left(\pi_\omega\mat a00{a\inv}\right)=0$ if $a\neq \pm 1$.
\end{xca}
\begin{lemma}
  \label{lemma:cuspidal-character-value}
  For every character $\omega$ of $(\FQ^*)_1$ and $d\in \Fq$ such that $\lambda^2-d\lambda +1$ is irreducible with roots $z$ and $z\inv$ in $\FQ$,
  \begin{equation*}
    \tr(\pi_\omega(\mat 0{-1}1d))=-\omega(z)-\omega(z\inv),
  \end{equation*}
\end{lemma}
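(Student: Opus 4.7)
The plan is to realize the trace as an explicit character sum over $(\FQ^*)_1$ and identify the two surviving terms with $\omega(z)$ and $\omega(z\inv)$.

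First I would fix the basis of $W_\omega$ indexed by $u \in \Fq^*$. For each $u$ choose $\tilde u \in \FQ^*$ with $N(\tilde u) = u$, and let $e_u \in W_\omega$ be the unique function supported on the orbit $(\FQ^*)_1 \tilde u$ with $e_u(\tilde u) = 1$ (hence $e_u(t\tilde u) = \omega(t)\inv$ for $t \in (\FQ^*)_1$). By Lemma~\ref{lemma:dimension} and the fact that the $(\FQ^*)_1$-orbits on $\FQ^*$ are exactly the level sets of $N$, these $q-1$ functions form a basis, and
\begin{equation*}
  \tr(\pi_\omega(\sigma)) = \sum_{u\in \Fq^*} (\tilde\rho(\sigma) e_u)(\tilde u).
\end{equation*}

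Next I apply the explicit formula \eqref{eq:Weil} to $\sigma = \mat 0{-1}1d$ (so $a=0$, $b=-1$, $c=1$); since $b\neq 0$ I get
\begin{equation*}
  (\tilde\rho(\sigma)e_u)(\tilde u) = -\tfrac{1}{q}\sum_{y\in \FQ} \psi\bigl(-dN(\tilde u)+\tr(\ol y \tilde u)\bigr) e_u(y).
\end{equation*}
Since $e_u$ is supported on $\{t\tilde u : t\in (\FQ^*)_1\}$ and since $t\in (\FQ^*)_1$ means $\ol t = t\inv$, we have $\ol{t\tilde u}\tilde u = t\inv N(\tilde u) = t\inv u$, so $\tr(\ol{t\tilde u}\tilde u) = u(t+t\inv)$. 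Substituting and summing over $u$, then exchanging the order of summation, gives
\begin{equation*}
  \tr(\pi_\omega(\sigma)) = -\tfrac{1}{q} \sum_{t\in (\FQ^*)_1} \omega(t)\inv \sum_{u\in \Fq^*} \psi\bigl(u(t+t\inv - d)\bigr).
\end{equation*}

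Now I would evaluate the inner sum by orthogonality of $\psi$: it equals $q-1$ if $t+t\inv = d$, and $-1$ otherwise. The condition $t+t\inv=d$ with $t\in (\FQ^*)_1$ is precisely $t^2 - dt + 1 = 0$, whose roots (by hypothesis) are $z$ and $z\inv$; these indeed lie in $(\FQ^*)_1$ because irreducibility of $\lambda^2 - d\lambda + 1$ over $\Fq$ forces Frobenius to interchange the two roots, giving $\ol z = z\inv$ and hence $N(z) = 1$. Writing $S = \omega(z)\inv + \omega(z\inv)\inv$ and using that $\omega$ is nontrivial on $(\FQ^*)_1$ (Exercise~\ref{xca:primitive}), so $\sum_{t\in (\FQ^*)_1} \omega(t)\inv = 0$, the sum collapses to
\begin{equation*}
  \tr(\pi_\omega(\sigma)) = -\tfrac{1}{q}\bigl[(q-1)S - (-S)\bigr] = -S.
\end{equation*}
Finally, $\omega(z\inv) = \omega(z)\inv$, so $S = \omega(z) + \omega(z\inv)$ and we obtain the claimed value. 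The only slightly tricky step is to see that the orthogonality argument works cleanly: this needs $t+t\inv - d \in \Fq$ (which follows from $\ol t = t\inv$), and needs the vanishing of the full sum $\sum_{(\FQ^*)_1} \omega\inv$, which is where primitivity of $\omega$ plays its only essential role.
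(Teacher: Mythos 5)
Your proof is correct and follows essentially the same route as the paper's: expand the trace over the basis of functions supported on the norm-level sets, apply the explicit formula \eqref{eq:Weil}, exchange the order of summation, and evaluate the inner sum over $u$ by orthogonality of $\psi$, using nontriviality of $\omega$ on $(\FQ^*)_1$ to kill the full character sum. Your treatment is if anything slightly more careful than the paper's, in that you justify explicitly why the roots $z, z^{-1}$ lie in $(\FQ^*)_1$ and where nontriviality of $\omega$ enters.
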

\begin{proof}
  By (\ref{eq:Weil}), we have
  \begin{equation*}
    \tilde\rho\mat 0{-1}1d \tilde f(x)= 
    -\frac 1q\sum_{y\in \FQ}\psi(\tr(\ol y x)-dN(x))\tilde f(y).
  \end{equation*}
  Using the notation of Lemma~\ref{ex:distinctness}, we have
  \begin{equation*}
    \tilde\rho\mat 0{-1}1d 1_u(\tilde u)= 
    -\frac 1q\sum_{y\in \FQ}\psi(\tr(\ol y \tilde u)-du)1_u(y).
  \end{equation*}
  Now, $1_u(y)=0$ unless $y=z\tilde u$ for some $z\in (\FQ^*)_1$.
  We have
  \begin{eqnarray*}
    \tilde\rho\mat 0{-1}1d 1_u(\tilde u) & = & 
    -\frac 1q\sum_{z\in (\FQ^*)_1} \psi(\tr(z\inv \ol{\tilde u}\tilde u)-du)\omega(z)\inv\\
    & = & -\frac 1q\sum_{z\in (\FQ^*)_1} \psi(u(z+z\inv)-du)\omega(z)\inv\\
    & = & -\frac 1q\sum_{z\in (\FQ^*)_1} \psi(u(z+z\inv -d))\omega(z)\inv.
  \end{eqnarray*}
  Therefore,
  \begin{eqnarray*}
    \tr(\pi_\omega\mat 0{-1}1d) & = & 
    -\frac 1q\sum_{u\in \Fq^*} \sum_{z\in (\FQ^*)_1} \psi(u(z+z\inv -d))\omega(z)\inv \\
    & = &
    -\frac 1q\sum_{z\in (\FQ^*)_1} \omega(z)\inv \sum_{u\in \Fq^*} \psi(u(z+z\inv -d)).
  \end{eqnarray*}
  If $d\neq z+z\inv$, then
  \begin{equation*}
    \sum_{u\in \Fq^*} \psi(u(z+z\inv -d))=\sum_{u\in \Fq^*} \psi(u)=-1.
  \end{equation*}
  On the other hand, if $d=z+z\inv$, then
  \begin{equation*}
    \sum_{u\in \Fq^*} \psi(u(z+z\inv-d))=q-1.
  \end{equation*}
  Therefore,
  \begin{eqnarray*}
  \tr(\pi_\omega\mat 0{-1}1d) & = & 
  -\frac 1q \sum_{z+z\inv=d} \omega(z)\inv (q+1)-\frac 1q \sum_{z+z\inv \neq d} \omega(z)\inv\\
  & = & 
  -\frac 1q \bigg[ \sum_{z\in \FQ^*} \omega(z)\inv + \sum_{z+z\inv=d} q\omega(z)\inv\bigg]\\
  & = & -\omega(z)-\omega(z)\inv.
  \end{eqnarray*}
\end{proof}
\begin{xca}
  \label{xca:wierd-char}
  Suppose that $\omega$ is the unique non-trivial character of $(\FQ^*)_1$ taking only the values $\pm 1$.
  Show that $\sum_{\sigma\in SL_2(\Fq)} \tr(\pi_\omega(\sigma))= 2(q^3-q)$.
  Conclude that $\pi_\omega(\sigma)$ is a sum of two non-isomorphic irreducible representations of $SL_2(\Fq)$.
\end{xca}
These representations must be irreducible of degree $\frac{q-1}2$ by Exercise~\ref{xca:cuspidal-degree-sl2}.
  Using the book-keeping at the end of Section~\ref{sec:parab-induc-repr}, we see that there remain $\frac{q-1}2$ irreducible representations of $SL_2(\Fq)$.
\begin{xca}
  Define an equivalence relation on the set of non-trivial characters of $(\FQ^*)_2$ by $\omega\sim \omega'$, where $\omega'=\omega\circ F$.
  Here $F$ is the Frobenius automorphism (Section~\ref{sec:Frobenius_norms_traces}).
  Observe that $\tr(\pi_\omega)=\tr(\pi_{\omega'})$.
  Show that the characters of the representations $\pi_\omega$, where $\omega$ runs over the equivalence classes of non-trivial characters of $(\FQ^*)_1$ are pairwise orthogonal.
\end{xca}
It follows that $\pi_\omega$, $\omega$ non-trivial and different from the character considered in Exercise~\ref{xca:wierd-char} give the remaining $\frac{q-1}2$ irreducible representations of $SL_2(\Fq)$.
\chapter{Some remarks on $GL_n(\Fq)$}
In this chapter we state some results on the representation theory of $GL_n(\Fq)$, without proofs, with the intention of motivating further reading.
The construction of representations of $GL_n(\Fq)$ follows that same principles as in the case of $GL_2(\Fq)$.
Parabolic induction (of which the constructions in Chapter~\ref{chap:induced_representations} are examples) is used to construct a large number of irreducible representations of $GL_n(\Fq)$ from representations of $GL_m(\Fq)$, when $m<n$.
The parameterisation of such representations is, in some sense, related to the representation theory of symmetric groups.
The remaining representations are called cuspidal\index{myindex}{cuspidal representation} and are parameterised by the Galois orbits of primitive characters of $\F_{q^n}^*$. 
The irreducible representations come in families, which reflect the parametrisation of conjugacy classes on $GL_n(\Fq)$.
\section{Parabolic Induction}
The process of parabolic induction is best thought of in terms of a graded associative algebra.
Let $R_n$ denote the free abelian group generated by the set of isomorphism classes of irreducible representations of $GL_n(\Fq)$.
Set $R=\oplus_{n=1}^\infty R_n$.
Let $P_{n,n'}$ denote the subgroup of $GL_{n+n'}(\Fq)$ consisting of matrices with block form
\begin{equation*}
  \begin{pmatrix}
    A_{n\times n} & B\\
    0 & A'_{n'\times n'}
  \end{pmatrix}
  ,
\end{equation*}
where $A$ and $A'$ are in $GL_n(\Fq)$ and $GL_{n'}(\Fq)$ respectively, and $B$ is an arbitrary matrix of the appropriate size.
Given representations $(\pi,V)$ and $(\pi',V')$ of $GL_n(\Fq)$ and $GL_{n'}(\Fq)$ respectively,
let $\pi\tilde \otimes \pi'$ be the representation of $P_{n,n'}$ on $V\otimes V'$ defined by
\begin{equation*}
  \pi \tilde\otimes \pi' \Mat AB0{A'} = \pi(A)\otimes \pi'(A').
\end{equation*}
Define $\pi\circ \pi'$ to be the representation of $(\pi\tilde \otimes \pi')^{GL_{n+n'}(\Fq)}$ of $GL_{n+n'}(\Fq)$ \cite[p.403]{MR0072878}.
This binary operation $R_n\times R_{n'}\to R_{n+n'}$ can be extended linearly to $R$.
Green shows that this is a commutative and associative product on $R$.

\section{Cuspidal representations}
\label{sec:cuspidal-reps-gln}

The cuspidal representations of $GL_n(\Fq)$ are those which are disjoint from all representations of the form $\pi'\circ \pi''$, where $\pi'$ and $\pi''$ are irreducible representations of $GL_{n'}(\Fq)$ and $GL_{n''}(\Fq)$, where $n=n'+n''$ and $n'$ and $n''$ are both positive.

Together with the \lq$\circ$\rq{} operation, cuspidal representations generate all of $R$.

The cuspidal representations of $GL_n(\Fq)$ have a nice parametrisation.
A character $\omega$ of $\F_{q^n}^*$ is called \emph{primitive}\index{myindex}{primitive character} if there does not exists any $d|n$ such that $\omega=N\circ \chi$ for any character $\chi$ of $\F_{q^d}^*$.
Here $N$ denotes the norm map $\F_{q^n}\to \F_{q^d}$ (see Section~\ref{sec:Frobenius_norms_traces}).
The Galois group of $\F_{q^n}$ over $\Fq$ acts on the set of primitive characters of $\F_{q^n}$: $\omega^g(x)=\omega({}^gx)$ for an element $g$ of the Galois group, for each $x\in \F_{q^n}$.
\begin{theorem}
  There is a canonical bijective correspondence between the set of Galois orbits of primitive characters of $\F_{q^n}^*$ and isomorphism classes irreducible cuspidal representations of $GL_n(\Fq)$. 
\end{theorem}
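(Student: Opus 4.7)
The plan is to establish the bijection in three moves: count the two sides and show they agree, construct a map from primitive characters to cuspidal representations that is constant on Galois orbits, and verify injectivity through a trace computation that generalises Lemma~\ref{lemma:distinctness}. The counting side is the easier of the two bookkeeping tasks. The Galois group $\mathrm{Gal}(\F_{q^n}/\Fq)$ is cyclic of order $n$, generated by the Frobenius $F(x)=x^q$, acting on characters by $\omega\mapsto\omega\circ F$. A character $\omega$ factors through the norm $N:\F_{q^n}^*\to\F_{q^d}^*$ for some proper divisor $d\mid n$ precisely when it is fixed by $F^d$, so primitivity is equivalent to the Galois stabiliser being trivial. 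Hence every Galois orbit of primitive characters has size exactly $n$, and M\"obius inversion gives the number of orbits as $\tfrac{1}{n}\sum_{d\mid n}\mu(n/d)(q^d-1)$.

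On the representation side, one uses the algebra structure on $R=\bigoplus R_n$ described in the chapter. An irreducible $\pi\in R_n$ is cuspidal iff it is disjoint from every $\pi'\circ\pi''$ with $n=n'+n''$ and $n',n''>0$, equivalently (by Frobenius reciprocity) iff its coinvariants under the unipotent radical of every proper $P_{n',n''}$ vanish. Using that the conjugacy classes of $GL_n(\Fq)$ are parameterised by partition-valued functions on monic irreducible polynomials (Green's ``type'' data), and that $R$ is freely generated by cuspidals under ``$\circ$'', one can extract the number of cuspidals in $GL_n(\Fq)$ by induction on $n$ and check it matches the primitive-character count above.

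The main obstacle is the construction of $\pi_\omega$ for a primitive $\omega$. The Weil representation method of Section~\ref{sec:weil-representation} does not extend beyond $n=2$, so one must use either Green's original degree-plus-character-table approach, or the Deligne-Lusztig construction: realise $\F_{q^n}^*$ as an anisotropic maximal torus $T\subset GL_n(\Fq)$, form the Deligne-Lusztig virtual representation $R_T^\omega$ on the $\ell$-adic cohomology of the associated Coxeter variety, and use the fact that when $\omega$ is in general position (i.e., primitive) $\pm R_T^\omega$ is an honest irreducible representation, cuspidal because the associated torus is anisotropic. The expected degree is $\prod_{i=1}^{n-1}(q^i-1)$. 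This is by far the hardest step and requires machinery much heavier than anything developed in the notes up to this point.

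Once $\pi_\omega$ is in hand, injectivity of the correspondence is handled by a character calculation. For a regular anisotropic element $g\in GL_n(\Fq)$ with eigenvalues $z,z^q,\ldots,z^{q^{n-1}}\in \F_{q^n}^*$, one shows, generalising the trace computation in the proof of Lemma~\ref{lemma:distinctness}, that
\begin{equation*}
  \tr(\pi_\omega(g)) = (-1)^{n-1}\sum_{i=0}^{n-1}\omega(z^{q^i}).
\end{equation*}
As $z$ varies over generators of $\F_{q^n}^*$ modulo Galois action, the right-hand side determines $\omega$ up to the action of $\mathrm{Gal}(\F_{q^n}/\Fq)$. Thus $\pi_\omega\cong\pi_{\omega'}$ forces $\omega'$ to lie in the Galois orbit of $\omega$, and conversely the formula shows $\pi_\omega$ depends only on that orbit. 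Combined with the matching counts, this yields the asserted canonical bijection.
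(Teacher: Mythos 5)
First, a point of comparison: the paper gives no proof of this theorem. It sits in the final chapter, which announces at the outset that its results are stated ``without proofs, with the intention of motivating further reading.'' So your proposal cannot be checked against an argument in the text; it has to stand on its own, and as written it is a roadmap rather than a proof. The one genuinely complete piece is the count on the character side: the observation that $\omega$ factors through the norm $\F_{q^n}^*\to\F_{q^d}^*$ exactly when $\omega\circ F^d=\omega$ (the kernel of the norm is the image of $x\mapsto F^d(x)/x=x^{q^d-1}$ in the cyclic group $\F_{q^n}^*$), hence that primitivity is equivalent to trivial Galois stabiliser, hence that Galois orbits of primitive characters have size $n$ and number $\tfrac1n\sum_{d\mid n}\mu(n/d)(q^d-1)$. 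That part is correct and self-contained, and it matches the remark following the theorem about irreducible monic polynomials of degree $n$.

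Everything after that has real gaps. The existence of $\pi_\omega$ \emph{is} the theorem, and you defer it entirely to Deligne--Lusztig theory or to Green's paper; saying ``form $R_T^\omega$ on the cohomology of the Coxeter variety and use that $\pm R_T^\omega$ is irreducible and cuspidal for $\omega$ in general position'' is a citation, not an argument, and nothing in these notes (nor in your proposal) builds that machinery or verifies those facts. The count on the representation side leans on the assertion that $R$ is freely generated by the cuspidal representations under $\circ$; that is itself a substantial theorem of Green, of essentially the same depth as the statement being proved, so the proposed induction is not grounded. The injectivity step assumes the character formula $\tr(\pi_\omega(C_f))=(-1)^{n-1}\sum_i\omega(z_i)$, which is precisely the next (also unproved) theorem of the chapter; and even granting it, the claim that the values $\sum_{i}\omega(z^{q^i})$ on elements $z$ of degree exactly $n$ over $\Fq$ determine the orbit of $\omega$ requires an argument --- linear independence of characters gives independence of the orbit sums as functions on all of $\F_{q^n}^*$, not a priori on the degree-$n$ elements alone (this is exactly the content of Exercise~\ref{ex:distinctness} in the case $n=2$, and its generalisation needs primitivity). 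Finally, a bijection obtained by matching counts with an injection is not automatically \emph{canonical}; the canonicity is supplied by the construction step, which is the step you have skipped.
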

It should be noted that the number such orbits is the same as the number of irreducible monic polynomials of degree $n$ with coefficients in $\Fq$.
These correspond precisely to the conjugacy classes of matrices in $GL_n(\Fq)$ with irreducible characteristic polynomial.
Moreover, this correspondence has a nice manifestation in terms of character values.
\begin{theorem}
  Let $f(t)$ is an irreducible monic polynomial of degree $n$ with coefficients in $\Fq$ with roots $z_1,\ldots, z_n$ in $\F_{q^n}$, and let $\omega$ be a primitive character of $\F_{q^n}^*$.
  Let $\pi_\omega$ denote the irreducible cuspidal representation of $GL_n(\Fq)$ corresponding to the Galois orbit of $\omega$.
  Then
  \begin{equation*}
    \tr(\pi_\omega(C_f))=(-1)^{n-1}\sum_{i=1}^n \omega(z_i).
  \end{equation*}
\end{theorem}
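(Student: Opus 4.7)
The plan is to reduce the general case to the Deligne--Lusztig character formula, following the paradigm already visible in the proof of Lemma~\ref{lemma:cuspidal-character-value} for $n=2$. Let $T_n\subset GL_n(\Fq)$ be the non-split anisotropic maximal torus obtained from the embedding $\F_{q^n}^*\hookrightarrow GL_n(\Fq)$ in which $\F_{q^n}^*$ acts on $\F_{q^n}$ regarded as an $n$-dimensional $\Fq$-vector space. The Weyl group $W(T_n)=N_{GL_n(\Fq)}(T_n)/T_n$ is cyclic of order $n$, generated by the Frobenius $F\colon x\mapsto x^q$ of $\F_{q^n}^*$.

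First I would identify the cuspidal representation $\pi_\omega$ with a sign-twist of the Deligne--Lusztig virtual character $R_{T_n}^\omega$. The desired identification proceeds by showing: (i) because $\omega$ is primitive, it lies in general position with respect to $W(T_n)$, so $(-1)^{n-1}R_{T_n}^\omega$ is an honest irreducible representation; (ii) because $T_n$ is anisotropic, $R_{T_n}^\omega$ is disjoint from every proper parabolic induction, hence cuspidal; (iii) the parameter of this cuspidal under the bijection asserted in the preceding theorem is the Galois orbit of $\omega$, which one checks either by invoking Green's original parametrization or by matching values on a single distinguished conjugacy class. Next I would apply the Deligne--Lusztig character formula at the regular semisimple element $C_f$. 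The companion matrix $C_f$ is $GL_n(\Fq)$-conjugate to the element $z_1\in T_n\cong \F_{q^n}^*$, and its centralizer in $GL_n(\Fq)$ equals $T_n$; in this regular situation the formula reduces to a pure Weyl-group sum
\begin{equation*}
  \tr R_{T_n}^\omega(C_f)=\sum_{w\in W(T_n)}\omega({}^w z_1)=\sum_{i=1}^{n}\omega(z_i),
\end{equation*}
because the Frobenius-orbit of $z_1$ is exactly the full root set $\{z_1,z_1^q,\ldots,z_1^{q^{n-1}}\}=\{z_1,\ldots,z_n\}$ of $f$. Multiplying by $(-1)^{n-1}$ yields the claim.

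The main obstacle, of course, is that essentially all of the nontrivial content has been pushed into steps (i)--(iii) and into the Deligne--Lusztig formula itself. No elementary construction of cuspidal representations of $GL_n(\Fq)$ for $n>2$ is available of the kind used in Section~\ref{sec:constr-cusp-repr} for $GL_2$, precisely because the Weil representation does not generalize beyond the $Sp_4$-level. The alternative route is Green's 1955 construction via symmetric-function identities and explicit character computations on unipotent classes; that route is combinatorially explicit but long, and ultimately proves the same character formula by an entirely different bookkeeping. Either way the theorem lies as deep as the classification it accompanies, which is why this chapter of the notes omits the proof.
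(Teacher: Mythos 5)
The paper states this theorem with no proof at all: the chapter opens by announcing that ``we state some results on the representation theory of $GL_n(\Fq)$, without proofs, with the intention of motivating further reading.'' So there is no argument of the paper's to compare yours against. That said, your sketch is a correct modern roadmap through Deligne--Lusztig theory. The identification of $T_n\cong \F_{q^n}^*$ as the elliptic Coxeter torus, the observation that primitivity of $\omega$ is exactly general position relative to the cyclic relative Weyl group $N_{GL_n(\Fq)}(T_n)/T_n \cong \Z/n\Z$, the cuspidality of $R_{T_n}^\omega$ because $T_n$ lies in no proper parabolic, the sign $(-1)^{n-1}$ as the difference of $\Fq$-ranks of $GL_n$ and $T_n$, and the evaluation of $R_{T_n}^\omega$ at a regular semisimple element of $T_n$ by the Weyl-sum formula --- all of this is right, and you are commendably candid that the theorem's real content sits inside these invocations.

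The one place you should tighten is step (iii), matching $(-1)^{n-1}R_{T_n}^\omega$ with the $\pi_\omega$ of the bijection asserted in the preceding theorem. That bijection was stated with no defining property, and your suggestion of ``matching values on a single distinguished conjugacy class'' is circular if that class is $C_f$ itself --- the very class whose character value the theorem computes. What is needed is either a citation establishing that Green's parametrization and the Deligne--Lusztig one coincide for $GL_n$, or an independent characterization of Green's bijection (e.g.\ by the behaviour of the cuspidals under the $\circ$-product, or by the degree formula) against which to compare. This is not an error, just a step left at the level of ``one checks.'' It is also worth remarking that Green's original 1955 proof, which the chapter points to, derives the same formula entirely by symmetric-function identities in the Hall algebra of $GL_n(\Fq)$, with no recourse to the $\ell$-adic cohomology of varieties; the two routes are genuinely different, with yours shorter at the cost of machinery that postdates the statement by two decades.
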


The primary decomposition for matrices (Corollary~\ref{cor:primary-decomposition-matrix-version}) has an analogy for representations of $GL_n(\Fq)$.
Fix an irreducible cuspidal representation $\pi$ of some $GL_n(\Fq)$.
Say that a representation $\rho$ of $GL_m(\Fq)$ is $\pi$\dash primary if it is a subrepresentation of some polynomial expression of $\pi$ in $R$.
If $\rho_1,\ldots, \rho_n$ are irreducible primary representations, with $\rho_i$ begin $\pi_i$\dash primary, where $\pi_1,\ldots,\pi_n$ are pairwise non-isomorphic cuspidal representations, then $\rho_1\circ \cdots \circ \rho_n$ is irreducible.

Green shows that the irreducible $\pi$\dash primary representations are parameterised by partitions.
It is no coincidence that the irreducible representations of symmetric groups are also  parameterised by partitions.
An elegant approach to understanding these relationships is by putting additional structure on $R$, namely that of a \emph{positive self adjoint Hopf algebra}.
Very general results about the structure of such algebras are interpreted in terms of the representation theory of general linear groups over finite fields by Zelevinsky in \cite{MR643482}. 

\appendix
\chapter{Similarity Classes of Matrices}
The classification of representations of $GL_n(\Fq)$ is closely analogous to the classification of conjugacy classes.
The results in this chapter give a classification of the conjugacy classes in $GL_n(\Fq)$, along with representatives for each class.
Descriptions of the centralisers are also given.
\section{Basic properties of matrices}
Let $F$ be any field.
\begin{defn}
  \label{defn:similarity}\index{myindex}{similar matrices}
  Two matrices $A$ and $B$ with entries in $F$ are said to be \emph{similar} if there exists an invertible matrix $X$ such that $BX=XA$.
\end{defn}
Similarity is an equivalence relation on the set of all $n\times n$ matrices.
The equivalence classes are called \emph{similarity classes}\index{myindex}{similarity class}.
Given a matrix $A\in M_n(F)$, for every vector $\vec x\in F^n$ and every polynomial $f(t)\in F[t]$ define $f\vec x=f(A)\vec x$.
This endows $F^n$ with the structure of an $F[t]$\dash module, which will be denoted by $M^A$.
\begin{xca}
  If $A$ is similar to $B$, then $M^A$ is isomorphic to $M^B$ as an $F[t]$\dash module.
\end{xca}
Conversely, given an $F[t]$\dash module $M$, pick any basis of $M$ as an $F$\dash vector space.
Let $A_M$ be the matrix by which $t$ acts on $M$ with respect to this basis.
A different basis of $M$ would give rise to a matrix similar to $A_M$.
Therefore, $M$ determines a similarity class of matrices.
\begin{prop}
  \label{prop:module-similarity}
  $A\mapsto M^A$ gives rise to a bijection between the set of similarity classes of matrices and the set of isomorphism classes of $F[t]$\dash modules.
\end{prop}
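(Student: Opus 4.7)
The plan is to verify that the assignment $A\mapsto M^A$ descends to a well-defined map on similarity classes, and then exhibit a two-sided inverse (up to isomorphism) using the construction $M\mapsto A_M$ described immediately before the statement.

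First I would dispatch well-definedness, which is exactly the content of the preceding exercise: if $BX = XA$ for some invertible $X$, then $X:F^n\to F^n$ is an $F$-linear bijection satisfying
\begin{equation*}
  X(A\vec x) = B(X\vec x) \quad\text{for all } \vec x\in F^n,
\end{equation*}
so $X$ intertwines the action of $t$ on $M^A$ and $M^B$. Since $F$-linearity of $X$ automatically gives compatibility with scalar multiplication by elements of $F\subset F[t]$, and compatibility with $t$ extends (by induction) to compatibility with every polynomial in $t$, the map $X$ is an isomorphism $M^A\tilde\to M^B$ of $F[t]$-modules. Hence the class of $M^A$ depends only on the similarity class of $A$.

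Next I would prove surjectivity. Given an $F[t]$-module $M$ (tacitly assumed to be finite-dimensional over $F$, since we are matching it against matrix classes in $M_n(F)$), choose any $F$-basis and let $A_M$ be the matrix of multiplication by $t$ in this basis. Then by construction the identity map on the underlying $F$-vector space is an isomorphism $M^{A_M}\tilde\to M$ of $F[t]$-modules, because both the $F$-action and the $t$-action match. Different choices of basis yield conjugate matrices, hence the same similarity class, so $M\mapsto [A_M]$ is a well-defined map on isomorphism classes.

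Finally I would show injectivity, which will finish the proof by exhibiting the inverse. Suppose $M^A\cong M^B$ as $F[t]$-modules, and let $X:F^n\to F^n$ be the underlying $F$-linear bijection; then $X$ is an invertible matrix, and compatibility with the $t$-action says precisely $XA\vec x = BX\vec x$ for all $\vec x\in F^n$, i.e.\ $BX = XA$, so $A$ and $B$ are similar in the sense of Definition~\ref{defn:similarity}. Combined with surjectivity and the fact that $M^{A_M}\cong M$, this shows that $[A]\mapsto [M^A]$ and $[M]\mapsto [A_M]$ are mutually inverse bijections. There is no real obstacle here; the only mild subtlety worth stating explicitly is the implicit finite-dimensionality hypothesis on $M$, and the observation that an $F[t]$-module structure is completely determined by the underlying $F$-vector space together with the single $F$-linear endomorphism given by multiplication by $t$, which is what makes the two constructions genuinely inverse to one another.
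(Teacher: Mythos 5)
Your proof is correct and follows exactly the route the paper intends: the preceding exercise gives well-definedness, the $M\mapsto A_M$ construction gives the inverse, and the observation that an $F[t]$-module isomorphism is precisely an invertible intertwiner with the $t$-action gives injectivity. The paper leaves all of this implicit in the surrounding discussion, so your write-up simply supplies the details of the same argument; your remark about the tacit finite-dimensionality of $M$ is a worthwhile clarification.
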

\begin{defn}
  [Simple matrix]
  \index{myindex}{simple matrix}
  Recall that an $F[t]$\dash module is called \emph{simple} if there is no non-trivial proper subspace of $M$ which is preserved by $F[t]$.
  A matrix $A$ is said to be \emph{simple} if $M^A$ is a simple $F[t]$\dash module.
\end{defn}
\begin{xca}
  \label{xca:simple}
  Show that $A$ is simple if and only if its characteristic polynomial is irreducible.
\end{xca}
\begin{xca}
  \label{xca:direct-sum}
  For any two matrices $A$ and $B$, let $A\oplus B$ denote the block matrix $\left(
    \begin{smallmatrix}
      A & 0 \\ 0 & B
    \end{smallmatrix}
    \right)$.
    $A\oplus B$ will be called the \emph{direct sum} of $A$ and $B$.
    Show that $M^{A\oplus B}=M^A\oplus M^B$ (a canonical isomorphism of $F[t]$\dash modules).
\end{xca}
\begin{defn}
  [Indecomposable matrix]
  \index{myindex}{indecomposable matrix}
  A matrix is said to be \emph{indecomposable} if it is not similar to a matrix of the form $A\oplus B$, where $A$ and $B$ are two strictly smaller matrices.
  Equivalently, $A$ is indecomposable if $M^A$ is indecomposable as an $F[t]$-module.
\end{defn}
\begin{defn}
  [Semisimple matrix]
  \index{myindex}{semisimple matrix}
  A matrix is said to be \emph{semi\-simple} if it is similar to a direct sum of simple matrices.
  Equivalently, $A$ is semisimple if $M^A$ is a semisimple $F[t]$\dash module (i.e., $M^A$ is a direct sum of simple $F[t]$\dash modules).
\end{defn}
\begin{xca}
  For any $\lambda\in F$, show that the matrix $\mat \lambda 1 0 \lambda$ is indecomposable, but not semisimple (and hence not simple either).
\end{xca}
\section{Primary decomposition}
\label{sec:primary-decomposition}
Let $f(t)$ be any irreducible monic polynomial in $F[t]$.
Given an $F[t]$\dash module $M$, its \emph{$f$\dash primary part}\index{myindex}{primary part} is the submodule
\begin{equation*}
  M_f=\{\vec x\in M\;:\:f^k\vec x=0\text{ for some } k\in \N\}.
\end{equation*}
\begin{theorem}
  [Primary decomposition]
  \cite[Theorem~3.11]{basicalgebra1}
  \label{theorem:primary-decomposition}
  \index{myindex}{primary decomposition theorem}
  Let $M$ be an $F[t]$\dash module which is also a finite dimensional $F$\dash vector space.
  Then $M_f=0$ for all but finitely many irreducible monic polynomials $f(t)\in F[t]$.
  \begin{equation*}
    M=\bigoplus_f M_f,
  \end{equation*}
  the sum being over all the irreducible monic polynomials $f$ for which $M_f\neq 0$.
\end{theorem}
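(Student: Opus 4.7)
My plan is the standard B\'ezout-based argument, extracting everything from a single polynomial annihilating the finite-dimensional module $M$.

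First, since $M$ is finite-dimensional over $F$, the action of $t$ on $M$ has a characteristic polynomial $p(t)\in F[t]$, and by the Cayley--Hamilton theorem $p$ annihilates $M$. Factor $p(t)=f_1(t)^{e_1}\cdots f_r(t)^{e_r}$ into powers of distinct monic irreducibles. For the finite support assertion, let $f$ be any irreducible monic polynomial distinct from the $f_i$ and let $\vec x\in M_f$, so $f^k\vec x=0$ for some $k$. Since $\gcd(f,p)=1$, there exist $u,v\in F[t]$ with $uf^k+vp=1$; applying this to $\vec x$ and using $p\vec x=0$ gives $\vec x=0$, so $M_f=0$.

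Next I would refine the primary parts to $M_{f_i}=\ker f_i(A)^{e_i}$, where $A$ denotes the matrix by which $t$ acts. One inclusion is immediate. For the other, if $f_i^k\vec x=0$, then $f_i^{e_i}$ lies in the ideal $(f_i^k,p)$, since $\gcd(f_i^k,p)=f_i^{\min(k,e_i)}$ divides $f_i^{e_i}$; B\'ezout then yields $u,v\in F[t]$ with $uf_i^k+vp=f_i^{e_i}$, whence $f_i^{e_i}\vec x=0$. For the decomposition, set $p_i(t)=p(t)/f_i(t)^{e_i}=\prod_{j\neq i}f_j(t)^{e_j}$. The $p_i$ are globally coprime (no irreducible divides them all), so B\'ezout gives $u_i\in F[t]$ with $\sum_iu_ip_i=1$. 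Any $\vec x\in M$ then decomposes as $\vec x=\sum_iu_i(A)p_i(A)\vec x$, and each summand $\vec x_i:=u_i(A)p_i(A)\vec x$ satisfies $f_i^{e_i}\vec x_i=u_i(A)p(A)\vec x=0$, hence lies in $M_{f_i}$.

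For directness, suppose $\sum_i\vec x_i=0$ with $\vec x_i\in M_{f_i}$. Since $p_j$ is divisible by $f_i^{e_i}$ for every $i\neq j$ and $f_i^{e_i}$ annihilates $M_{f_i}$ by the refinement above, we obtain $p_j(A)\vec x_i=0$ for $i\neq j$. Thus inside $M_{f_j}$ only the $i=j$ term of $\vec x_j=\sum_iu_i(A)p_i(A)\vec x_j$ survives, giving $\vec x_j=u_j(A)p_j(A)\vec x_j$; applying $u_j(A)p_j(A)$ to the relation $\sum_i\vec x_i=0$ then collapses it to $\vec x_j=0$.

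The main obstacle is the refinement $M_{f_i}=\ker f_i(A)^{e_i}$. A priori $M_{f_i}$ is only the increasing union $\bigcup_k\ker f_i(A)^k$, and although finite dimensionality by itself guarantees this union stabilizes, one needs to know that the stabilization exponent is bounded by $e_i$ in order to run the B\'ezout manipulations for directness. This is exactly what the $\gcd(f_i^k,p)=f_i^{\min(k,e_i)}$ calculation provides; once it is in hand, the rest of the proof is a routine coprimality exercise in $F[t]$.
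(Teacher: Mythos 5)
Your proof is correct, and it is the standard coprimality argument. The paper itself does not prove this theorem; it simply cites \cite[Theorem~3.11]{basicalgebra1}, so there is no in-paper proof to compare against. Your argument is the one that reference (and essentially any algebra text) would give for finitely generated torsion modules over a PID, specialized to $F[t]$ acting on a finite-dimensional space: pick a monic annihilating polynomial, factor it, use B\'ezout in the two forms needed (pairwise coprimality for the finiteness and the directness, global coprimality of the cofactors $p_i=p/f_i^{e_i}$ for the spanning identity $\sum u_ip_i=1$). The refinement $M_{f_i}=\ker f_i(A)^{e_i}$ via $\gcd(f_i^k,p)=f_i^{\min(k,e_i)}$ is exactly the right lemma to make the directness argument go through, and you have stated it cleanly. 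One small remark: invoking Cayley--Hamilton is more than is needed --- any monic annihilating polynomial works, and one exists simply because $F[A]\subset\End_F(M)$ is finite-dimensional, so $1,A,A^2,\ldots$ are eventually linearly dependent. Using the minimal polynomial instead of the characteristic polynomial would make the exponents $e_i$ sharp and slightly streamline the gcd computation, but this is a stylistic choice, not a gap.
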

Let $f\in F[t]$ be an irreducible monic polynomial.
An $F[t]$-module $M$ is called \emph{$f$\dash primary}\index{myindex}{primary module} if $M=M_f$.
$M$ is called \emph{primary} if it is $f$\dash primary for some $f$.
\begin{xca}
  \label{xca:primary}
  Let $f(t)\in F[t]$ be an irreducible monic polynomial, and $p(t)\in F[t]$ be any monic polynomial.
  Show that $F[t]/p(t)$ is $f$\dash primary if and only if $p(t)=f(t)^r$ for some $r\geq 0$.
\end{xca}
\begin{theorem}
  \label{theorem:primary-factors}
  Let $f(t)\in F[t]$ be an irreducible monic polynomial, and $A$ be a square matrix.
  Then $M^A_f \neq 0$ if and only if $f(t)$ divides the characteristic polynomial of $A$.
\end{theorem}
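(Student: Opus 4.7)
The plan is to use Theorem~\ref{theorem:primary-decomposition} together with the multiplicative behaviour of the characteristic polynomial on $F[t]$-invariant subspaces to handle both implications in parallel. Throughout, write $p_A(t)$ for the characteristic polynomial of $A$, and recall that $M^A$ is the $F[t]$-module $F^n$ on which $t$ acts as $A$.

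For the forward direction, assume $M^A_f \neq 0$ and pick a nonzero $\mathbf{x}\in M^A_f$. By definition $f(A)^k \mathbf{x} = 0$ for some $k\geq 1$, so the annihilator ideal of $\mathbf{x}$ in $F[t]$ is a nonzero principal ideal containing $f^k$; since $f$ is irreducible and $F[t]$ is a PID, this annihilator equals $(f^j)$ for some $1 \leq j \leq k$. Hence the cyclic $F[t]$-submodule $F[t]\mathbf{x}\cong F[t]/(f^j)$ is $A$-invariant, and in the basis $\mathbf{x}, A\mathbf{x},\ldots, A^{j\deg f - 1}\mathbf{x}$ the action of $A$ is the companion matrix of $f^j$, whose characteristic polynomial is $f^j$. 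Extending to a basis of $F^n$ puts $A$ in block upper-triangular form and yields $p_A(t) = f(t)^j\, q(t)$ for some $q(t)\in F[t]$; in particular $f\mid p_A$.

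For the converse, assume $f\mid p_A$ and apply Theorem~\ref{theorem:primary-decomposition} to obtain $M^A = \bigoplus_g M^A_g$, where $g$ runs over the irreducible monic polynomials with $M^A_g \neq 0$. Each $M^A_g$ is $A$-invariant, so $p_A(t) = \prod_g p_g(t)$, where $p_g$ is the characteristic polynomial of $A\mid_{M^A_g}$. The claim is that $p_g$ is a power of $g$. Indeed $M^A_g$ is a finite-dimensional, and in particular finitely generated, $F[t]$-module annihilated by some power of $g$; the structure theorem for modules over the PID $F[t]$ then gives $M^A_g \cong \bigoplus_i F[t]/(g^{a_i})$, and each cyclic summand $F[t]/(g^{a_i})$ contributes $g^{a_i}$ to the characteristic polynomial. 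Hence $p_g = g^{m_g}$ with $m_g \geq 1$, and $p_A(t) = \prod_g g(t)^{m_g}$; unique factorization in $F[t]$ then forces the irreducible $f$ to coincide with one of the $g$ in the product, giving $M^A_f \neq 0$.

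The whole argument is really bookkeeping with $F[t]$-module structure, so no step is deep; the only point that demands mild care is the assertion that the characteristic polynomial of a $g$-primary module is a power of $g$. I handle it through the structure theorem above, but one could equally well use a composition series: every simple $g$-primary $F[t]$-module is isomorphic to $F[t]/(g)$, whose characteristic polynomial is exactly $g$, and the characteristic polynomial is multiplicative along short exact sequences, so the characteristic polynomial of $A\mid_{M^A_g}$ is $g^{\mathrm{length}(M^A_g)}$.
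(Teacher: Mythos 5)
Your proof is correct, but it takes a genuinely different route from the paper's. The paper proves both implications in the contrapositive: for $f \nmid \chi_A \Rightarrow M^A_f = 0$ it uses B\'ezout (coprimality of $f$ and $\chi_A$) together with Cayley--Hamilton to show $f(A)$ is invertible; for $M^A_f = 0 \Rightarrow f \nmid \chi_A$ it passes to a splitting field $E$ of $f$, factors $f(A)=\prod_i(A-\mu_i I)^{m_i}$, and observes that no root $\mu_i$ can be an eigenvalue. You instead work directly with the $F[t]$-module structure: for the forward direction you exhibit a cyclic submodule $F[t]\mathbf{x}\cong F[t]/(f^j)$, read off $f^j$ as a factor of $\chi_A$ from the companion-matrix block, and for the converse you combine the primary decomposition with the structure theorem to conclude $\chi_A=\prod_g g^{m_g}$ and invoke unique factorization. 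Both are sound. Two points of comparison are worth noting. First, your converse leans on the structure theorem for modules over a PID, which in this paper appears \emph{after} Theorem~\ref{theorem:primary-factors} (in Section~\ref{sec:structure-primary}); the paper's splitting-field argument deliberately avoids that forward reference, and your closing remark about composition series is the cleaner way to sidestep the full structure theorem if you want to preserve the logical ordering. Second, your argument buys you more than the statement asks for: it exhibits the complete factorization of $\chi_A$ into prime powers indexed by the primary components, whereas the paper's proof only establishes the divisibility claim. The paper's version is the minimal tool for the job; yours is a slightly heavier but more structural account.
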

\begin{proof}
  Let $\chi_A$ denote the characteristic polynomial of $A$.
  If $f$ is an irreducible polynomial that does not divide $\chi_A$, then there exist polynomials $r$ and $s$ such that $fr+\chi_As=1$.
  Evaluating at $A$ and applying the Cayley-Hamilton theorem shows that $f(A)r(A)=I$.
  It follows that $f(A)$ is non-singular.
  Hence $f(A)^k$ is also non-singular for every positive integer $k$.
  Therefore, $M^A_f=0$.

  Conversely, if $M^A_f=0$, then $f(A)^k$ is non-singular for every $k\in \N$.
  In particular, $f(A)$ is non-singular.
  Let $E$ be a splitting field of $f$.
  Suppose that
  \begin{equation*}
    f(t)=\prod_{i=1}^h (t-\mu_i)^{m_i},
  \end{equation*}
  with $\mu_1,\ldots,\mu_h\in E$ distinct, and $m_1,\ldots,m_h\in \N$.
  Therefore,
  \begin{equation*}
    f(A)=\prod_{i=1}^h (A-\mu_iI)^{m_i}.
  \end{equation*}
  Since $f(A)$ is non-singular, so is $A-\mu_iI$ for each $i$.
  Therefore, no $\mu_i$ is an eigenvalue of $A$. 
  It follows that $f$ does not divide $\chi_A$.
\end{proof}
If $M^A$ is $f$\dash primary then the matrix $A$ is called an \emph{$f$\dash primary matrix}\index{myindex}{primary matrix}.
It follows that a matrix is primary if and only if its characteristic polynomial has a unique irreducible factor.
\begin{cor}
  \label{cor:primary-decomposition-matrix-version}
  Every matrix $A\in M_n(F)$ is similar to a matrix of the form
  \begin{equation*}
    \bigoplus_{f|\chi_A} A_f,
  \end{equation*}
  where $A_f$ is an $f$\dash primary matrix, and the sum is over the irreducible factors of the characteristic polynomial of $A$.
  Moreover, for every $f$, the similarity class of $A_f$ is uniquely determined by the similarity class of $A$.
\end{cor}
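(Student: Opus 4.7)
The plan is to translate the question entirely into the language of $F[t]$-modules via Proposition~\ref{prop:module-similarity}, apply the primary decomposition theorem just stated, and then translate back. First I would take the $F[t]$-module $M^A$ associated with $A$ and apply Theorem~\ref{theorem:primary-decomposition} to obtain a decomposition
\begin{equation*}
  M^A = \bigoplus_f (M^A)_f,
\end{equation*}
where the sum is over irreducible monic $f \in F[t]$ and only finitely many summands are nonzero. By Theorem~\ref{theorem:primary-factors}, the summand $(M^A)_f$ is nonzero precisely when $f \mid \chi_A$, so in fact the sum ranges over the irreducible factors of $\chi_A$.

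Next I would manufacture the claimed matrix form. For each $f \mid \chi_A$, choose an $F$-basis of $(M^A)_f$, and let $A_f$ be the matrix by which $t$ acts on $(M^A)_f$ in that basis; by definition, $M^{A_f}$ is $f$-primary, so $A_f$ is an $f$-primary matrix. Concatenating these bases gives a basis of $M^A$, and with respect to this basis $t$ acts as the block diagonal matrix $\bigoplus_{f\mid \chi_A} A_f$. By Exercise~\ref{xca:direct-sum} the module attached to this block sum is $\bigoplus_f M^{A_f}$, and this is isomorphic as an $F[t]$-module to $M^A$ by construction; hence Proposition~\ref{prop:module-similarity} delivers similarity of $A$ with $\bigoplus_f A_f$.

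For the uniqueness assertion, suppose $A$ is similar to $A'$ and let $A_f$, $A'_f$ be $f$-primary matrices appearing in any two such decompositions. The key observation is that $(M^A)_f$ is defined intrinsically inside $M^A$ as the subspace annihilated by some power of $f$, so an $F[t]$-module isomorphism $M^A \xrightarrow{\sim} M^{A'}$ restricts to an isomorphism of primary parts $(M^A)_f \xrightarrow{\sim} (M^{A'})_f$. Since $M^{A_f} \cong (M^A)_f$ and $M^{A'_f} \cong (M^{A'})_f$, this gives $M^{A_f} \cong M^{A'_f}$, and Proposition~\ref{prop:module-similarity} translates this back into similarity of $A_f$ with $A'_f$.

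The only mildly delicate point, and the step I would write most carefully, is justifying that the decomposition $M^A = \bigoplus_f (M^A)_f$ is preserved by isomorphisms; everything else is bookkeeping around Proposition~\ref{prop:module-similarity}. This follows because the defining condition $f^k \vec x = 0$ is phrased purely in terms of the $F[t]$-action, so any $F[t]$-linear map sends $(M^A)_f$ into $(M^{A'})_f$, and applying the same argument to the inverse gives equality. With that point secured, the corollary is an immediate consequence of Theorem~\ref{theorem:primary-decomposition} combined with the similarity/module dictionary.
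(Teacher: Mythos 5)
Your argument is correct and is exactly the route the paper intends: translate to $F[t]$-modules via Proposition~\ref{prop:module-similarity}, apply the primary decomposition theorem, use Theorem~\ref{theorem:primary-factors} to identify the indexing set with the irreducible factors of $\chi_A$, and observe that primary parts are intrinsic to secure uniqueness. The paper records the corollary without a separate proof, treating it as an immediate consequence of the preceding results; your write-up simply makes that reasoning explicit.
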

Thus, the study of similarity classes of matrices is reduced to the study of similarity classes of primary matrices.
\section{Structure of a primary matrix}
\label{sec:structure-primary}
\begin{theorem}
  [Structure theorem]
  \index{myindex}{Structure theorem}
  \cite[Section~3.8]{basicalgebra1}
  For every $F[t]$-module $M$, there exist non-constant monic polynomials $f_1,\ldots,f_r$ such that $f_1|\cdots |f_r$ and
  \begin{equation*}
    M\cong F[t]/f_1(t)\oplus \cdots \oplus F[t]/f_r(t).
  \end{equation*}
\end{theorem}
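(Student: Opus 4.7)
The statement is the classical invariant-factors form of the structure theorem for finitely generated modules over the PID $F[t]$, with the implicit understanding (in context) that $M$ is finitely generated; in the applications of the appendix $M=M^A$ is even finite dimensional over $F$, so the absence of a free summand in the statement is justified. My plan is to prove it by reducing a presentation matrix to Smith normal form via elementary row and column operations, which is possible because $F[t]$ is a Euclidean domain.

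First I would choose generators $m_1,\dots,m_n$ of $M$ and consider the surjection $\pi:F[t]^n\to M$ sending the $i$\dash th standard basis vector to $m_i$. Let $K=\ker \pi$. The key preliminary fact is that every submodule of $F[t]^n$ is free of rank $\le n$; this is proved by induction on $n$, using at each stage that the image of the last coordinate projection is an ideal, hence principal, so one can split off a rank-one free summand. Choosing a basis of $K$ gives a matrix $A\in M_{n\times m}(F[t])$ (with $m\le n$) such that $M\cong\mathrm{coker}(A:F[t]^m\to F[t]^n)$; the similarity class of $M$ is captured by $A$ up to the equivalence $A\sim UAV$ with $U\in GL_n(F[t])$, $V\in GL_m(F[t])$.

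The main technical step is the Smith normal form theorem: such an $A$ can be reduced, by successive left and right multiplications by elementary matrices, to a diagonal matrix with entries $d_1\mid d_2\mid\cdots\mid d_k$ followed by zero rows and columns. The algorithm is to locate a nonzero entry of minimal degree, move it to the $(1,1)$ position, and use the Euclidean algorithm in $F[t]$ — this is where the PID hypothesis is essential — to kill the remainder of the first row and column; if any entry off the first row/column is not divisible by $d_1$, one further row operation produces an entry of strictly smaller degree and the process is repeated, terminating because degrees are bounded below. Induction on $\min(n,m)$ handles the lower-right $(n-1)\times(m-1)$ block, and the divisibility $d_1\mid d_2\mid\cdots$ is enforced by the descent on degrees. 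The hard part is really this algorithm; it is routine but requires careful book-keeping, and is where one uses Euclidean division in an essential way.

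Finally, once $A$ is in Smith form, the presentation gives
\begin{equation*}
  M\cong F[t]/d_1\oplus\cdots\oplus F[t]/d_k\oplus F[t]^{\,n-k}.
\end{equation*}
Units $d_i\in F^*$ contribute trivial summands and are discarded; after rescaling each remaining $d_i$ to be monic and relabelling, the non-unit factors become the desired chain $f_1\mid\cdots\mid f_r$ of non-constant monic polynomials. Since $M$ is finite dimensional over $F$, the free part $F[t]^{n-k}$ must vanish, i.e.\ $k=n$, so no $d_i$ is zero; this shows that the decomposition in the stated form exists. (Uniqueness of the $f_i$, although not asserted in the statement, would follow by comparing the primary parts $M_f$ in the sense of Section~\ref{sec:primary-decomposition} and invoking the elementary divisor form, but I would not include this unless needed.)
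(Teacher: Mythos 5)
The paper offers no proof of this theorem at all; it is quoted as a known result with a citation to the reference on basic algebra, so there is nothing internal to compare your argument against. Your proof is correct and is the standard one that such references give: present $M$ by a matrix over the Euclidean domain $F[t]$ (using that submodules of $F[t]^n$ are free of rank at most $n$), reduce to Smith normal form by the Euclidean algorithm, read off the invariant factors from the cokernel, and discard units and the free part. You are also right to flag that the statement implicitly assumes $M$ finitely generated (indeed finite dimensional over $F$ in all uses in the appendix), since otherwise the claim is false as written; that observation and the remark that finite dimensionality kills the free summand are exactly the points worth making explicit.
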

Fix an irreducible monic polynomial $f(t)\in F[t]$.
If $M$ is $f$\dash primary, then by Exercise~\ref{xca:primary}, each for each $i$, $f_i=f^{\lambda_i}$ for some $\lambda_i>0$.
Therefore,
\begin{cor}
  [Structure of a primary module]
  \index{myindex}{primary module!structure of}
  \label{cor:primary-structure}
  If $M$ is an $f$\dash primary $F[t]$\dash module, then there exists a non-decreasing sequence of integers $\lambda_1\leq \cdots \leq \lambda_r$ such that
  \begin{equation*}
    M\cong F[t]/f(t)^{\lambda_1}\oplus \cdots \oplus F[t]/f(t)^{\lambda_r}.
  \end{equation*}
\end{cor}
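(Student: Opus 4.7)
The plan is to invoke the Structure theorem stated immediately before the corollary, which gives non-constant monic polynomials $f_1 \mid \cdots \mid f_r$ with
\begin{equation*}
  M \cong F[t]/f_1(t) \oplus \cdots \oplus F[t]/f_r(t),
\end{equation*}
and then to use the hypothesis that $M$ is $f$-primary together with Exercise~\ref{xca:primary} to force each $f_i$ to be a power of $f$.

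First I would show that each direct summand $F[t]/f_i(t)$ is itself $f$-primary. Fix an index $i$ and consider the element $\bar 1 = 1 + (f_i)$ in $F[t]/f_i(t)$, identified with its image in $M$. Since $M$ is $f$-primary, there exists $k \in \N$ with $f(t)^k \bar 1 = 0$, which means $f_i(t)$ divides $f(t)^k$ in $F[t]$. More generally, for any $\bar g \in F[t]/f_i(t)$, we have $f(t)^k \bar g = g(t) f(t)^k = 0$, so $F[t]/f_i(t)$ is indeed $f$-primary. Now Exercise~\ref{xca:primary} applies to $p(t) = f_i(t)$: since $F[t]/f_i(t)$ is $f$-primary, $f_i(t) = f(t)^{\lambda_i}$ for some $\lambda_i \geq 0$.

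Next I would extract the inequalities. Because each $f_i$ is non-constant, we have $\lambda_i \geq 1$, so $\lambda_i > 0$. The divisibility $f_1 \mid f_2 \mid \cdots \mid f_r$ becomes $f(t)^{\lambda_1} \mid f(t)^{\lambda_2} \mid \cdots \mid f(t)^{\lambda_r}$, which (as $f$ is a non-unit) is equivalent to $\lambda_1 \leq \lambda_2 \leq \cdots \leq \lambda_r$. Substituting $f_i(t) = f(t)^{\lambda_i}$ back into the decomposition yields the desired isomorphism.

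I do not expect any serious obstacle here, since the argument is essentially a translation: the only nontrivial point is verifying that each summand inherits $f$-primariness from $M$, and this follows at once from the definition of primary part applied to the embedding $F[t]/f_i(t) \hookrightarrow M$. Everything else is bookkeeping about divisibility of powers of an irreducible polynomial.
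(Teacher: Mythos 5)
Your argument is correct and is essentially the same as the paper's, which applies the Structure theorem and then invokes Exercise~\ref{xca:primary} to conclude each $f_i$ is a power of $f$. You simply fill in the details the paper leaves implicit (that each summand inherits $f$-primariness, and that divisibility of powers of $f$ translates into inequalities on the exponents), which is exactly the right bookkeeping.
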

\begin{defn}
  [Partition]
  \index{myindex}{partition}
  A \emph{partition} is a finite sequence $\lambda=(\lambda_1, \cdots, \lambda_r)$ of positive integers such that $\lambda_1\leq \cdots \leq \lambda_r$.
  Define $|\lambda|:=\lambda_1+\cdots +\lambda_r$.
  One says that \emph{$\lambda$ is a partition of $|\lambda|$}.
  The \emph{length}\index{myindex}{partition!length of} of $\lambda$ is the non-negative integer $r$ (there is an \lq empty partition\rq{} of length $0$ denoted $\emptyset$, with $|\emptyset|=0$).
  Let $\Lambda$ denote the set of all partitions.
\end{defn}
Given a partition $\lambda=(\lambda_1,\ldots,\lambda_l)$, define an $F[t]$-module
\begin{equation*}
  M_{f,\lambda}=F[t]/f(t)^{\lambda_1}\oplus\cdots\oplus F[t]/f(t)^{\lambda_l}.
\end{equation*}
Corollary~\ref{cor:primary-structure} says that every $f$\dash primary $F[t]$\dash module is isomorphic to $M_{f,\lambda}$ for some partition $\lambda$.
\begin{xca}
  Suppose that $f$ and $f'$ are two irreducible monic polynomials, $\lambda$ and $\lambda'$ two partitions.
  Show that the $F[t]$-modules $M_{f,\lambda}$ and $M_{f',\lambda'}$ are isomorphic if and only if $f=f'$ and $\lambda=\lambda'$.
\end{xca}
Let $S$ denote the set of all irreducible monic polynomials in $F[t]$.
Given a function $\psi:S\to \Lambda$ such that $\psi(f)=\emptyset$ for all but finitely many $f\in S$, let $M_\phi$ denote the $F[t]$-module
\begin{equation*}
  M_\psi=\bigoplus_{f\in S}M_{f,\psi(f)}.
\end{equation*}
Then $\dim_FM_\psi=\sum_{f\in S} \deg(f)|\psi(f)|$.
Let $n_\psi=\dim_FM_\psi$.
\begin{theorem}
  [Similarity classes of matrices]
  \label{theorem:similarity-classes}
  The map $\psi\mapsto M_\psi$ is a bijective correspondence between the set of all functions $S\to \Lambda$ with the property that $\psi(f)=\emptyset$ for all but finitely many $f\in S$ and $n_\psi=n$ and the set of isomorphism classes of $n$-dimensional $F[t]$\dash modules (and hence the set of similarity classes of $n\times n$ matrices).
\end{theorem}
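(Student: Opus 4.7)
The plan is to combine Proposition~\ref{prop:module-similarity}, the primary decomposition Theorem~\ref{theorem:primary-decomposition}, and the structure Corollary~\ref{cor:primary-structure}: once a finite-dimensional $F[t]$-module is shown to be determined, up to isomorphism, by its list of primary parts, and each primary part by a partition, the theorem is essentially bookkeeping. Well-definedness of the assignment $\psi\mapsto M_\psi$ is immediate, since $\dim_F F[t]/f(t)^{\lambda_i} = \lambda_i\deg f$ gives $\dim_F M_\psi = \sum_{f\in S}\deg(f)\,|\psi(f)| = n_\psi$, a finite sum because $\psi$ has finite support.

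For surjectivity, given an $n$-dimensional $F[t]$-module $M$, I would apply Theorem~\ref{theorem:primary-decomposition} to write $M=\bigoplus_f M_f$ with all but finitely many summands zero, and then invoke Corollary~\ref{cor:primary-structure} on each non-zero $M_f$ to obtain a partition $\lambda(f)$ with $M_f \cong M_{f,\lambda(f)}$. Setting $\psi(f)=\lambda(f)$ on those $f$ and $\psi(f)=\emptyset$ otherwise gives a function with $M\cong M_\psi$ and $n_\psi=n$. For injectivity, the key observation is that the primary components $M_f$ are intrinsically defined as the submodule of elements killed by some power of $f$, so any isomorphism $M_\psi\to M_{\psi'}$ must restrict on each primary piece to give $M_{f,\psi(f)}\cong M_{f,\psi'(f)}$ for every $f$. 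The problem thus reduces to the exercise immediately preceding the theorem: if $M_{f,\lambda}\cong M_{f,\lambda'}$ as $F[t]$-modules, then $\lambda=\lambda'$.

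This last reduction is the main obstacle, since Corollary~\ref{cor:primary-structure} delivers existence but not uniqueness of the partition. The clean approach is to produce $\lambda$ itself as a dimensional invariant. Let $\varphi$ denote the $F$-linear endomorphism of $M_{f,\lambda}$ given by multiplication by $f(t)$. A direct computation on each summand $F[t]/f(t)^{\lambda_i}$ shows that $\ker(\varphi^k)$ has $F$-dimension $\deg(f)\cdot\min(k,\lambda_i)$, and summing over $i$ yields
\begin{equation*}
m_k \;=\; \frac{1}{\deg f}\bigl(\dim_F\ker(\varphi^k)-\dim_F\ker(\varphi^{k-1})\bigr) \;=\; \#\{i:\lambda_i\geq k\}.
\end{equation*}
These numbers form the conjugate partition $\lambda^\ast$, which determines $\lambda$; since they depend only on the isomorphism class of the $F[t]$-module $M_{f,\lambda}$, this settles uniqueness. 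Note also that $f$ itself is recovered as the unique irreducible monic polynomial whose powers annihilate a non-zero $f$-primary module, so the pair $(f,\lambda)$ is an isomorphism invariant of $M_{f,\lambda}$. Finally, the parenthetical bijection with similarity classes of $n\times n$ matrices follows by composing with the bijection of Proposition~\ref{prop:module-similarity}.
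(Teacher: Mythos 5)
Your proof is correct and assembles the result exactly as the paper intends: the paper states this theorem without proof, as an immediate consequence of Proposition~\ref{prop:module-similarity}, Theorem~\ref{theorem:primary-decomposition}, Corollary~\ref{cor:primary-structure}, and the exercise preceding the theorem. Your kernel-dimension argument recovering the conjugate partition $\lambda^\ast$ from $\dim_F\ker(\varphi^k)$ is a valid solution to that exercise, which the paper leaves to the reader.
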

\section{Block Jordan canonical form}
There is a version of the Jordan canonical form for matrices for which the irreducible factors of the characteristic polynomial have derivatives which are not identically zero.

In order to obtain this form, we need the following result:
\begin{theorem}
  \label{theorem:Henselization}
  Suppose that $f$ an irreducible monic polynomial in $F[t]$ such that $f'(t)$ is not identically zero.
  Let $E$ denote the field $F[t]/f(t)$.
  Then the rings $k[t]/f(t)^r$ and $E[u]/u^r$ are isomorphic.
\end{theorem}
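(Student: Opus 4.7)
The plan is to write down an explicit $F$-algebra map
\[
\Phi : F[t]/f(t)^r \longrightarrow E[u]/u^r,
\]
and then show it is an isomorphism by a dimension count together with an injectivity argument. Let $\bar t \in E$ denote the image of $t$, so $f(\bar t) = 0$ and $\{1, \bar t, \ldots, \bar t^{\deg f - 1}\}$ is an $F$-basis of $E$. I would first send $t \mapsto \bar t + u$, which defines an $F$-algebra homomorphism $F[t] \to E[u]$. By the (purely formal) Taylor expansion,
\[
f(\bar t + u) = f(\bar t) + f'(\bar t)\,u + \cdots = u \cdot w(u),
\]
where $w(u) \in E[u]$ has constant term $f'(\bar t)$.

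Here is where the separability hypothesis enters. Since $f$ is irreducible in $F[t]$ and $f'$ is a nonzero polynomial of strictly smaller degree, $\gcd(f, f') = 1$ in $F[t]$, so $f'(\bar t)$ is nonzero, hence a unit, in the field $E$. Therefore $w(u)$ has unit constant term in $E[u]/u^r$ and is itself a unit there. It follows that $f(\bar t + u)^r = u^r w(u)^r$ vanishes modulo $u^r$, so the map $F[t] \to E[u]/u^r$ factors through a well-defined $\Phi$.

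Next, both rings have $F$-dimension $r \deg f$, so it suffices to prove $\Phi$ is injective. I would establish, by induction on $k$ with $1 \le k \le r$, the following claim: if $p(t) \in F[t]$ satisfies $p(\bar t + u) \equiv 0 \pmod{u^k}$ in $E[u]$, then $f(t)^k$ divides $p(t)$. The base case $k=1$ reduces to $p(\bar t) = 0$ in the field $E$, which gives $f \mid p$. For the inductive step, write $p = f^{k-1} q$ and compute
\[
p(\bar t + u) = f(\bar t + u)^{k-1}\,q(\bar t + u) = u^{k-1}\, w(u)^{k-1}\, q(\bar t + u).
\]
Reducing modulo $u^k$ and using that $w(u)^{k-1}$ has unit constant term, the hypothesis forces $q(\bar t) = 0$, whence $f \mid q$ and $f^k \mid p$. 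Applying the claim with $k=r$ proves $\ker \Phi = 0$, and the theorem follows from the dimension count.

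The main obstacle is the verification that $f'(\bar t) \ne 0$ in $E$; without the hypothesis $f' \not\equiv 0$ this can fail (e.g.\ for an inseparable extension in positive characteristic), in which case $w(u)$ would not be a unit and neither the well-definedness of $\Phi$ on the whole quotient nor the inductive cancellation step would go through. Once this single algebraic input is secured, the rest of the argument is a routine manipulation of formal Taylor expansions.
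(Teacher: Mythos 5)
Your proof is correct, but it runs in the opposite direction from the one in the text, and the comparison is instructive. The text builds a map $E[u]/u^r\cong F[u,v]/(f(v),u^r)\to F[t]/f(t)^r$, which requires deciding where to send $v$; that is exactly what its Hensel's Lemma provides, namely an iteratively constructed $q_r(t)$ with $q_r\equiv t \bmod f$ and $f(q_r(t))\equiv 0\bmod f(t)^r$. You instead map $F[t]/f(t)^r\to E[u]/u^r$ by $t\mapsto \bar t+u$, where no lifting problem arises at all: $f(\bar t+u)=u\,w(u)$ lies in $(u)$ simply because $f(\bar t)=0$, so well-definedness is automatic (contrary to your closing remark, this part does not use $f'\not\equiv 0$). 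The separability hypothesis enters your argument only once, to guarantee $f'(\bar t)\neq 0$, i.e.\ that $w(u)$ is a unit, which drives your inductive proof of injectivity; the text uses the same hypothesis inside the Newton-type iteration to invert $f'$ modulo $f$. Both proofs close with the dimension count $r\deg f$. Your route is shorter and isolates the role of separability very cleanly, but the text's detour through $q_r(t)$ is not wasted effort: the explicit lift (via $\theta(t)=t-q(t)$) is reused in the proof of the block Jordan canonical form to produce the basis in which multiplication by $t$ is $J_{\lambda_i}(f)$, and your construction does not supply that ingredient.
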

\begin{proof}
  The main step in the proof is a version of \emph{Hensel's Lemma}
  \begin{lemma}
    [Hensel]
    There exists $q_r(t)\in F[t]$ such that $q_r(t)\equiv t \mod f(t)$, and $f(q_r(t))\equiv 0 \mod f(t)^r$.
  \end{lemma}
  \begin{proof}
    The proof is by induction on $r$.
    When $r=1$, one may take $q_1(t)=t$.

    Suppose that $q_{r-1}(t)\in F[t]$ is such that
    \begin{equation*}
      q_{r-1}(t)\equiv t \mod f(t) \quad \text{and}\quad f(q_{r-1}(t))\equiv 0 \mod f(t)^{r-1}.
    \end{equation*}
    Then, using the Taylor expansion, for any $h(t)\in F[t]$, 
    \begin{equation*}
      f(q_{r-1}(t)+f(t)^{r-1}h(t))\equiv f(q_{r-1}(t))+f(t)^{r-1}h(t)f'(q_{r-1}(t)) \mod f(t)^r.
    \end{equation*}
    Since $q_{r-1}(t)\equiv t\mod f(t)$, $f'(q_{r-1}(t)\equiv f'(t) \mod f(t)$.
    By hypothesis $f'(t)$ is a non-zero polynomial of degree strictly less than $f(t)$.
    Therefore, $f'(t)$ is not divisible by $f(t)$.
    Since $f(t)$ is irreducible, it means that there exists $r(t), s(t)\in F[t]$ such that $f'r+fs=1$, which means that $f'(t)r(t) \equiv 1\mod f(t)$.
    Since $f(q_{r-1}(t))\equiv 0 \mod f(t)^{r-1}$, there exists $f_1(t)\in F[t]$ such that
    \begin{equation*}
      f(q_{r-1}(t))=f(t)^{r-1}f_1(t).
    \end{equation*}
    When $h(t)=-f_1(t)r(t)$ and $q_r(t)=q_{r-1}(t)+f(t)^{r-1}h(t)$, one has
    \begin{equation*}
      q_r(t)\equiv t \mod f(t) \quad \text{and}\quad f(q_r(t))\equiv 0 \mod f(t)^r.
    \end{equation*}
  \end{proof}
  Given $q_r(t)$ as in Hensel's lemma, the map
  \begin{equation*}
    \phi: F[u,v]/(f(v),u^r)\to F[t]/f(t)^r
  \end{equation*}
  given by setting $\phi(v)=q_r(t)$, and $\phi(u)=f(t)$ gives rise to a well defined ring homomorphism, since $f(q_r(t))\equiv 0\mod f(t)^r$
  Since $q_r(t)\equiv t\mod f(t)$, and $f(t)$ and $q_r(t)$ lie in the image of $\phi$, $t$ also lies in the image of $\phi$.
  This makes $\phi$ surjective.
  Moreover, $\phi$ is a linear transformation of $F$-vector spaces of dimension $rd$.
  Therefore, $\phi$ must be an isomorphism of rings.
\end{proof}
\begin{defn}
  [Companion matrix]
  \index{myindex}{companion matrix}
  Let $f(t)=t^n-a_{n-1}t^{n-1}-\cdots-a_1t-a_0$.
  Then the \emph{companion matrix} of $f$ is the $n\times n$ matrix:
  \begin{equation*}
    C_f=
    \begin{pmatrix}
      0 & 0 & \cdots & 0 & a_0\\
      1 & 0 & \cdots & 0 & a_1\\
      0 & 1 & \cdots & 0 & a_2\\
      \vdots & \vdots & \ddots & \vdots & \vdots\\
      0 & 0 & \cdots & 1 & a_{n-1}
    \end{pmatrix}
    .
  \end{equation*}
\end{defn}
\begin{theorem}
  [Block Jordan Canonical Form]
  \index{myindex}{Jordan canonical form}
  \label{theorem:JCF}
  Let $A\in M_n(F)$ be such that for every irreducible factor $f$ of the characteristic polynomial of $A$, $f'$ is not identically zero.
  Then $A$ can be written as a block diagonal matrix with blocks of the form
  \begin{equation*}
    J_r(f)=
    \begin{pmatrix} 
    C_f & 0 & 0 & \cdots & 0 & 0\\
    I & C_f & 0 & \cdots & 0 & 0\\
    0 & I & C_f & \cdots & 0 & 0\\
    \vdots & \vdots & \vdots & \ddots & \vdots & \vdots\\
    0 & 0 & 0 & \cdots & C_f & 0\\
    0 & 0 & 0 & \cdots & I & C_f
    \end{pmatrix}
    _{rd\times rd},
  \end{equation*}
  where $d$ is the degree of $f$, an irreducible factor of the characteristic polynomial of $A$, $C_f$ is the companion matrix of $f$, and $r$ is a positive integer.
  Up to rearrangement of blocks, this canonical form is unique. 
\end{theorem}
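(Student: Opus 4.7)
The plan is to use primary decomposition (Corollary~\ref{cor:primary-decomposition-matrix-version}) together with the structure of primary modules (Corollary~\ref{cor:primary-structure}) to reduce the problem to a single cyclic $F[t]$\dash module $M=F[t]/f(t)^r$, where $f$ is an irreducible monic factor of the characteristic polynomial of $A$. Under the standing hypothesis $f'\neq 0$ one shows that multiplication by $t$ on $M$ has matrix $J_r(f)$ in a suitable basis; assembling the resulting blocks over all irreducible factors of $\chi_A$ and over all parts of the corresponding partitions yields the desired block diagonal form.

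First I would construct an explicit $F$\dash algebra isomorphism $\Phi\colon F[t]/f(t)^r\to E[u]/u^r$, where $E=F[v]/f(v)$, determined by $\Phi(t)=v+u$. The key calculation is the Taylor expansion $f(v+u)=f(v)+u\,f'(v)+u^2(\cdots)=u\,g(u)$, whose constant term is $g(0)=f'(v)$. Since $f$ is irreducible, $\deg f'<\deg f$, and $f'\neq 0$ by hypothesis, $f'(v)$ is a nonzero element of the field $E$; hence $g(u)$ is a unit in $E[u]/u^r$ and $f(v+u)^r=u^r g(u)^r=0$, which says that $\Phi$ is well defined on the quotient. Injectivity follows because any nonzero ideal of the local ring $F[t]/f(t)^r$ contains $f(t)^{r-1}$, and $\Phi(f(t)^{r-1})=u^{r-1}g(u)^{r-1}\neq 0$; comparing $F$\dash dimensions ($rd$ on each side) completes the argument that $\Phi$ is an isomorphism.

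Next I would compute the matrix of multiplication by $v+u$ on $E[u]/u^r$ in the ordered basis $v^0u^0,v^1u^0,\ldots,v^{d-1}u^0,v^0u^1,\ldots,v^{d-1}u^{r-1}$. Multiplication by $v$ acts on each block $\{v^iu^j\}_{0\le i<d}$ as the companion matrix $C_f$ (using $v^d=a_0+a_1v+\cdots+a_{d-1}v^{d-1}$), while multiplication by $u$ sends $v^iu^j$ to $v^iu^{j+1}$, mapping the $j$\dash th block identically onto the $(j{+}1)$\dash th block (and to zero from the last block). Summing the two contributions produces exactly $J_r(f)$. Transporting this along $\Phi\inv$ shows that $t$ acts on $F[t]/f(t)^r$ via a matrix similar to $J_r(f)$, as required.

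Uniqueness is then immediate from Theorem~\ref{theorem:similarity-classes}: the multiset of pairs $(f,\lambda_i)$ read off from any block diagonal form with blocks $J_{\lambda_i}(f)$ is precisely the invariant $\psi$ attached to the similarity class of $A$, so no other arrangement of blocks is possible. I expect the main obstacle to lie in the middle step, namely producing the \emph{correct} isomorphism $F[t]/f(t)^r\cong E[u]/u^r$: the one coming from the proof of Theorem~\ref{theorem:Henselization} sends $v$ to a Hensel lift of $t$ and does not obviously make $t$ correspond to the element $v+u$ of the target. Verifying that the variant I want is well defined is exactly where the hypothesis $f'\neq 0$ gets consumed.
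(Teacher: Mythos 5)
Your proof is correct, and it takes a genuinely different route from the paper. The paper invokes the pre-established isomorphism of Theorem~\ref{theorem:Henselization} (which sends $v\mapsto q_r(t)$, a Hensel lift, and $u\mapsto f(t)$), then traces through what $t$ becomes under the inverse map, obtaining $t\mapsto \alpha u + v$ for a unit $\alpha$; it must then work in the somewhat awkward basis $\alpha^j v^i$ rather than the monomial basis. You instead construct a fresh $F$\dash algebra homomorphism $\Phi\colon F[t]/f(t)^r\to E[u]/u^r$ by sending $t\mapsto v+u$ and checking directly, via the formal Taylor (Hasse) expansion $f(v+u)=f'(v)\,u + O(u^2)$, that $f(v+u)$ is $u$ times a unit of $E[u]/u^r$ (here is precisely where $f'\neq 0$ together with $\deg f'<\deg f$ and irreducibility of $f$ is consumed). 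That $\Phi$ is well defined follows since $f(v+u)^r$ then lies in $(u^r)=0$; injectivity follows from your local-ring argument (the minimal nonzero ideal $(f^{r-1})$ maps to $u^{r-1}\cdot(\text{unit})\neq 0$), and a dimension count gives surjectivity. The payoff is that $t$ acts by the transparent element $v+u$, for which the ordinary monomial basis $\{v^i u^j\}$ visibly yields $J_r(f)$ (the $v$\dash action gives the $C_f$ diagonal blocks, the $u$\dash action gives the identity subdiagonal blocks). Your uniqueness argument via Theorem~\ref{theorem:similarity-classes} is the same as the paper's. In short, you avoid Theorem~\ref{theorem:Henselization} entirely; the tradeoff is that the paper is reusing a result it needs elsewhere (e.g.\ for centralisers in Section~\ref{sec:centralizers}), whereas your argument is more self-contained and avoids the unit $\alpha$ bookkeeping.
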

\begin{proof}
  By Exercise~\ref{xca:direct-sum} and Theorem~\ref{theorem:primary-decomposition} one may assume that $A$ is $f$\dash primary, for some irreducible monic polynomial $f$.
  Let $E=F[v]/f(v)$.
  By Corollary ~\ref{cor:primary-structure} and Theorem~\ref{cor:primary-structure}, there exists a partition $\lambda$ such that
  \begin{equation*}
    M^A\cong E[u]/u^{\lambda_1}\oplus\cdots \oplus E[u]/u^{\lambda_r}.
  \end{equation*}
  In the notation of the proof of Theorem~\ref{theorem:Henselization}, let $\theta(t)=t-q(t)$, where we write $q$ for $q_{\lambda_i}$ for some $i$.
  Then $\theta(t)\in (f(t))$.
  But $\theta(t) \notin (f(t)^2)$, for if it did, we would have
  \begin{eqnarray*}
    f(t) & = & f(\theta(t)+q(t))\\
    & \cong & f(q(t))+\theta(t)f'(q(t) \mod (f(t)^2)\\
    & = & 0 \mod (f(t)^2),
  \end{eqnarray*}
  a contradiction.
  Therefore, $\theta(t)=\alpha f(t)$, where $\alpha$ is a unit in $F[t]/f(t)^{\lambda_i}$.
  In the isomorphism
  \begin{equation*}
    F[t]/(f(t)^r)\to E[u]/u^r=F[u,v]/(u^r,f(v)),
  \end{equation*}
  $t\mapsto \alpha u +v$.
  Since $A$ acts by $t$, 
  with respect to the basis of $E[u]/u^{\lambda_i}$ over $F$ given by
  \begin{equation*}
    1, v,\ldots, v^{d-1},\alpha, \alpha v,\ldots, \alpha v^{d-1},\ldots, \alpha^{\lambda_i-1}, \alpha^{\lambda_i-1}v,\ldots \alpha^{\lambda_i-1}v^{d-1},
  \end{equation*}
  the matrix of multiplication by $t=\alpha u +v$ is $J_{\lambda_i}(f)$.
\end{proof}
The hypothesis on $A$ in Theorem~\ref{theorem:JCF} always holds when $F$ is a perfect field, as we shall see in Section~\ref{sec:perfect-fields}.
By Corollary~\ref{cor:perfection} every finite field is perfect.
Therefore, every matrix over a finite field has a Jordan canonical form.

\section{Centralisers}
\label{sec:centralizers}
For any $A\in M_n(F)$ define
\begin{equation*}
  Z(A)=\{B\in M_n(A)\;|\:AB=BA\}.
\end{equation*}
\begin{theorem}
  Let $A\in M_n(F)$ be a matrix such that for each irreducible factor $f$ of the characteristic polynomial of $A$, $f'$ is not identically zero.
  Suppose that $A$ is similar to $\oplus_f A_f$, where $A_f$ is $f$\dash primary (see Corollary~\ref{cor:primary-decomposition-matrix-version}).
  Then $Z(A)\cong \oplus_f Z(A_f)$.
  If $A$ is $f$\dash primary, $E=F[t]/f(t)$, and $\lambda$ is the partition associated to $M^A$ in Corollary~\ref{cor:primary-structure}, then
  \begin{equation*}
    Z(A)\cong \End_{E[u]}(E[u]/u^{\lambda_1}\oplus\cdots \oplus E[u]/u^{\lambda_r}).
  \end{equation*}
\end{theorem}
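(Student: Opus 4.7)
The plan is to translate the centralizer $Z(A)$ into a module-theoretic endomorphism ring and then exploit the structural decompositions already at our disposal.

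First I would observe that a matrix $B\in M_n(F)$ commutes with $A$ precisely when $B$, viewed as an $F$-linear endomorphism of $F^n$, is an $F[t]$-module endomorphism of $M^A$ (since $t$ acts on $M^A$ as $A$). Thus the starting identification is
\begin{equation*}
  Z(A)\cong \End_{F[t]}(M^A),
\end{equation*}
and the theorem becomes a statement about $F[t]$-module endomorphism rings.

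Next I would handle the direct-sum part. Primary decomposition (Theorem~\ref{theorem:primary-decomposition}) gives $M^A=\bigoplus_f M^A_f$. I would check that $\Hom_{F[t]}(M^A_f,M^A_g)=0$ whenever $f\neq g$: any element of the image of such a homomorphism is annihilated both by a power of $f$ and by a power of $g$, and since $\gcd(f^a,g^b)=1$ in $F[t]$, it must vanish. Consequently every $F[t]$-endomorphism of $M^A$ respects the primary decomposition, and we obtain $\End_{F[t]}(M^A)\cong\bigoplus_f\End_{F[t]}(M^A_f)$, which translates back into $Z(A)\cong\bigoplus_f Z(A_f)$.

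For the second assertion, assume $A$ is $f$-primary. Corollary~\ref{cor:primary-structure} identifies $M^A$ with $F[t]/f(t)^{\lambda_1}\oplus\cdots\oplus F[t]/f(t)^{\lambda_r}$ as an $F[t]$-module. Here the hypothesis that $f'$ is not identically zero is used to invoke Theorem~\ref{theorem:Henselization}, which furnishes an $F$-algebra isomorphism $F[t]/f(t)^r\cong E[u]/u^r$ for $E=F[t]/f(t)$. Since the $F[t]$-action on each summand factors through $F[t]/f(t)^{\lambda_i}$, and this quotient is \emph{isomorphic as an $F$-algebra} to $E[u]/u^{\lambda_i}$, the category of $F[t]$-modules killed by $f(t)^{\lambda}$ is equivalent to the category of $E[u]$-modules killed by $u^{\lambda}$. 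In particular the $F[t]$-endomorphism ring of $M^A$ coincides with its $E[u]$-endomorphism ring after transport of structure, yielding the claimed isomorphism
\begin{equation*}
  Z(A)\cong\End_{E[u]}\bigl(E[u]/u^{\lambda_1}\oplus\cdots\oplus E[u]/u^{\lambda_r}\bigr).
\end{equation*}

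The main subtlety, and the step I would treat most carefully, is the last one: verifying that an $F$-linear map that commutes with the action of $t$ automatically commutes with the actions of both $u$ and $v$ under the Hensel identification. This boils down to the fact that the image of $t$ generates $E[u]/u^r$ as an $F$-algebra (indeed $t$ corresponds to $\alpha u+v$ in the notation of the proof of Theorem~\ref{theorem:JCF}, and together with its powers one recovers $v$ and $u$), so any $F$-linear map commuting with $t$ automatically commutes with the entire algebra. Once this algebraic point is pinned down, both halves of the theorem fall out from the two structural results already established.
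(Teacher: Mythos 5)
Your proposal is correct and takes exactly the route the paper intends: the paper's proof is the single terse sentence that the result ``follows easily from Theorem~\ref{theorem:Henselization}, using the fact that $\End_{F[t]}M^A\cong Z(A)$,'' and you have simply supplied the details that sentence suppresses — the identification $Z(A)\cong\End_{F[t]}(M^A)$, the vanishing of Hom between distinct primary parts to get the direct-sum decomposition, and the transport of structure along the Hensel isomorphism $F[t]/f(t)^r\cong E[u]/u^r$. The one thing the paper leaves tacit and you flag explicitly (that an $F$-linear map commuting with $t$ commutes with all of $E[u]/u^r$, since $t$ generates it as an $F$-algebra) is exactly the right point to pin down; together with noting that endomorphisms over $F[t]$ of a module killed by $f^{\lambda_r}$ coincide with endomorphisms over $F[t]/f^{\lambda_r}$, this completes the argument.
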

Note that the group of units of the centraliser algebra $Z(A)$ will be the centraliser of $A$ in $GL_n(F)$.
\begin{proof}
  The theorem follows easily from Theorem~\ref{theorem:Henselization}, using the fact that $\End_{F[t]}M^A\cong Z(A)$.
\end{proof}
\section{Perfect fields}
\label{sec:perfect-fields}
\begin{defn}
  \label{defn:perfect-field}\index{myindex}{perfect field}
  A \emph{perfect field} is either a field of characteristic zero, or a field of characteristic $p>0$ for which the map $x\mapsto x^p$ is bijective.
\end{defn}
\begin{theorem}
  \label{theorem:irreducible-polynomials-over-perfect-fields}
  Suppose that $F$ is a perfect field and $f(t)\in F[t]$ is a non-constant irreducible polynomial.
  Then $f'(t)$ does not vanish identically.
\end{theorem}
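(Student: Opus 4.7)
The plan is to handle the two parts of Definition~\ref{defn:perfect-field} separately and in each case derive a contradiction from the assumption $f'(t) \equiv 0$.

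First, the characteristic zero case is immediate. Write $f(t) = a_n t^n + \cdots + a_1 t + a_0$ with $a_n \neq 0$ and $n \geq 1$ (since $f$ is non-constant and irreducible). Then $f'(t)$ has leading coefficient $n a_n$, and since $n \neq 0$ in a field of characteristic zero, $f'(t)$ is a nonzero polynomial.

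The interesting case is characteristic $p > 0$. Suppose for contradiction that $f'(t) \equiv 0$. Writing $f(t) = \sum_{k=0}^n a_k t^k$, the condition $f'(t) = \sum_{k=1}^n k a_k t^{k-1} \equiv 0$ forces $k a_k = 0$ in $F$ for every $k \geq 1$. Since $F$ has characteristic $p$, this means $a_k = 0$ whenever $p \nmid k$. Consequently only powers $t^{pj}$ appear, so there exist $b_0, \ldots, b_m \in F$ (with $m = n/p$) such that
\begin{equation*}
  f(t) = \sum_{j=0}^m b_j t^{pj} = g(t^p), \quad \text{where } g(t) = \sum_{j=0}^m b_j t^j.
\end{equation*}

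Now I invoke the hypothesis that $F$ is perfect: the Frobenius $x \mapsto x^p$ is surjective, so for each $j$ there exists $c_j \in F$ with $c_j^p = b_j$. Using that Frobenius is a ring homomorphism in characteristic $p$ (so $(u+v)^p = u^p + v^p$), one computes
\begin{equation*}
  \left(\sum_{j=0}^m c_j t^j\right)^p = \sum_{j=0}^m c_j^p t^{pj} = \sum_{j=0}^m b_j t^{pj} = f(t).
\end{equation*}
Since $\deg f = n \geq 1$, the polynomial $h(t) = \sum_j c_j t^j$ has degree $m \geq 1$, so $f = h^p$ is a proper factorization, contradicting irreducibility of $f$.

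The main potential obstacle is making sure the ``only powers of $t^p$ appear'' deduction is clean: one has to be careful that $k = 0$ is allowed (no constraint on $a_0$), and that the conclusion $p \mid k$ truly follows from $k a_k = 0$ in $F$ (which uses that $F$ is a domain and $k a_k = 0$ means the integer $k$ is a multiple of $p$). Everything else is standard manipulation with Frobenius, which works precisely because $F$ is perfect.
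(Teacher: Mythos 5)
Your proof is correct and follows essentially the same route as the paper's: $f'\equiv 0$ forces $f$ to be a polynomial in $t^p$, perfectness supplies $p$-th roots of the coefficients, and the Frobenius homomorphism exhibits $f$ as a $p$-th power, contradicting irreducibility. You simply spell out the characteristic-zero case and the divisibility bookkeeping more explicitly than the paper does.
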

\begin{proof}
  If $f'=0$, then the characteristic of $F$ must be $p>0$ and $f$ must be of the form
  \begin{equation*}
    f(t)=a_0+a_1t^p+a_2t^{2p}+\cdots .
  \end{equation*}
  Since $F$ is perfect, there exist $b_i\in E$ such that $b_i^p=a_i$.
  Then
  \begin{equation*}
    f(t)=(b_0+b_1t+b_2t^2+\cdots)^p,
  \end{equation*}
  contradicting the irreducibility of $f$.
\end{proof}
\chapter{Finite Fields}
\label{chap:finite_fields}
In this section, we study the finite fields.
Such a field must have prime characteristic (the characteristic of a field is the smallest integer $n$ such that $1+1+\cdots 1$ ($n$ times) is $0$).
Therefore, it contains one of the finite fields $\Fp$.
This makes it a finite dimensional vector space over $\Fp$, so that its order must be some power of $p$.
We will see that, up to isomorphism, there is exactly one field of a given prime power order.
We will also show that choosing a non-trivial character of the additive group of a finite field gives an identification of this group with its Pontryagin dual, and we will study the Fourier transform in this context.

\section{Existence and uniqueness}
\label{sec:existence_uniqueness}

We will show that for any power $p^k$ of $p$, there is a unique finite field of order $p^k$, which is unique up to isomorphism\footnote{The method given here assumes the existence of an algebraic closure of $\Fp$. This is contingent upon the axiom of choice. However, there are other ways to prove the same results without using the axiom of choice, see \cite[Chapter~7]{MR1070716}.}.
For convenience, write $q=p^k$.
Fix an algebraic closure $\overline{\Fp}$ of $\Fp$.
Look at the set
\begin{equation*}
  \Fq:=\{x\in \overline{\Fp}|x^q=x\}.
\end{equation*}
\begin{xca}
  If $x,y\in \Fq$, then show that $x+y$ and $xy$ are in $\Fq$.
\end{xca}
It follows from the above exercise that $\Fq$ is a field (why?).
\begin{xca}
  \label{exercise:roots}
  Let $K$ be any field, and $f(X)\in K[X]$ be of degree $d$. Show that $f(X)$ can not have more than $d$ roots in $K$.
\end{xca}
Since the elements of $S$ are roots of the polynomial $X^q-X$ which has degree $q$, there can be no more than $q$ of them.
\begin{xca}
  Let $K$ be any field. For a polynomial $f(X)\in K[X]$
  \begin{equation*}
    f(X)=a_0X^n+a_1X^{n-1}+\cdots+a_{n-1}X+a_n
  \end{equation*}
  define its (formal) derivative to be the polynomial
  \begin{equation*}
    f'(X)=na_0X^{n-1}+(n-1)a_1X^{n-2}+\cdots+a_{n-1}.
  \end{equation*}
  Show that if $a$ is a multiple root of $f(X)$ (i.e., $(X-a)^2|f(X)$) then $f'(a)=0$.
\end{xca}
The derivative of the polynomial $X^q-X$ is the constant polynomial $-1$.
Therefore, all its roots in $\overline{\Fp}$ are distinct.
This means that $S$ has exactly $q$ elements.
Therefore there exists a subfield of order $q$ in $\overline{\Fp}$.
In particular there exists a finite field of order $q$.

On the other hand, in any field of order $q$, the multiplicative group of non-zero elements in the field has order $q-1$.
Therefore, each element of the field satisfies $x^{q-1}=1$, or $x^q=x$.
Thus any subfield of $\Fq$ of order $q$ must be equal to $S$.

Now any field of order $q$ must have characteristic $p$, hence is an algebraic extension of $\Fp$.
Therefore, it is isomorphic to some subfield of $\ol \Fp$.
We have seen that only such field is $\Fq$.
It follows that every field of order $q$ is isomorphic to $\Fq$.
We have proved the following theorem:
\begin{theorem}
  \label{theorem:finite_fields}
  For every power $q$ of a prime number, there exists a finite field of order $q$, which is unique up to isomorphism.
\end{theorem}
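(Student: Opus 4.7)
The plan is to prove existence and uniqueness simultaneously by working inside a fixed algebraic closure $\ol\Fp$, exactly along the lines sketched by the exercises in the preceding paragraphs. First I would fix $\ol\Fp$ and define
\begin{equation*}
  \Fq := \{x\in \ol\Fp \;|\: x^q = x\}.
\end{equation*}
The first task is to verify that this is a subfield of $\ol\Fp$. Closure under addition follows from the fact that in characteristic $p$ the Frobenius $x\mapsto x^p$ is a ring homomorphism, hence so is its $k$\dash fold iterate $x\mapsto x^q$; thus $(x+y)^q = x^q+y^q = x+y$ when $x,y\in \Fq$. Closure under multiplication and inversion of nonzero elements is immediate, and this is the content of the first exercise preceding the theorem.

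Next I would establish that $|\Fq|=q$. The upper bound $|\Fq|\leq q$ is the invoked exercise that a polynomial of degree $d$ over a field has at most $d$ roots, applied to $X^q-X$. The matching lower bound uses the derivative criterion: the formal derivative of $X^q-X$ is the constant $-1$, which is nowhere zero, so $X^q-X$ has no repeated roots in $\ol\Fp$; combined with the fact that $\ol\Fp$ is algebraically closed so $X^q-X$ splits completely, this gives exactly $q$ distinct roots. This proves existence.

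For uniqueness, suppose $K$ is any field of order $q$. Its characteristic must be the unique prime $p$ dividing $|K|=q$, so $K$ contains $\Fp$ and is an algebraic extension of $\Fp$, hence embeds into $\ol\Fp$. The multiplicative group $K^*$ has order $q-1$, so every nonzero $x\in K$ satisfies $x^{q-1}=1$, and therefore every $x\in K$ (including $0$) satisfies $x^q=x$. Thus the image of any embedding $K\hookrightarrow \ol\Fp$ lies in $\Fq$, and since both sides have cardinality $q$ the embedding is an isomorphism $K\cong \Fq$.

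The only genuinely delicate point is the lower bound $|\Fq|\geq q$, since it tacitly uses that $\ol\Fp$ actually contains a splitting field for $X^q-X$; this in turn rests on the existence of an algebraic closure, which is the footnoted set\dash theoretic subtlety. Everything else is routine, and I would present the argument in the order: subfield, count (upper bound via degree, lower bound via separability), and finally uniqueness via the identity $x^q=x$ forced on any field of order $q$.
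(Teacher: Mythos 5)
Your argument is correct and follows the paper's own proof essentially verbatim: fix $\ol\Fp$, define $\Fq$ as the fixed points of $x\mapsto x^q$, count its elements via the degree bound and the derivative criterion for separability, and deduce uniqueness from the fact that every field of order $q$ embeds in $\ol\Fp$ with image forced to be $\Fq$ by the identity $x^q=x$. No gaps; the reliance on the existence of an algebraic closure is the same caveat the paper itself footnotes.
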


\section{The multiplicative group of $\Fq$}
\label{sec:multiplicative_group}
We present the proof of the following theorem straight out of Serre's book \cite{MR0344216}.
\begin{theorem}
  \label{theorem:multiplicative_group}
  The multiplicative group $\Fq^*$ is cyclic of order $q-1$.
\end{theorem}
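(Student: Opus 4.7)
The plan is to exploit the tension between two facts: the order of every element of $\Fq^*$ divides $q-1$ (Lagrange), but on the other hand the polynomial $X^m - 1$ over a field can have at most $m$ roots, which sharply limits how small the exponent of $\Fq^*$ can be.

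First I would reduce the problem to showing that the exponent of $\Fq^*$ (the least common multiple of the orders of its elements) is exactly $q-1$. For this, the key lemma is that in any finite abelian group $A$, there exists an element whose order equals the exponent of $A$. I would prove this by first showing that if $a$ and $b$ are elements of a finite abelian group with orders $m$ and $n$, then there exists an element of order $\mathrm{lcm}(m,n)$ (the standard trick: write $\mathrm{lcm}(m,n) = m'n'$ with $m' \mid m$, $n' \mid n$ and $\gcd(m',n')=1$, then use suitable powers of $a$ and $b$). Iterating over all elements produces an element whose order equals $\exp(A)$.

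Next, let $d$ denote this exponent of $\Fq^*$. Since $|\Fq^*| = q-1$, Lagrange gives $d \mid q-1$. On the other hand, every element $x \in \Fq^*$ satisfies $x^d = 1$, so all $q-1$ elements of $\Fq^*$ are roots of the polynomial $X^d - 1 \in \Fq[X]$. By Exercise~\ref{exercise:roots}, a polynomial of degree $d$ over a field has at most $d$ roots, so $q-1 \leq d$. Combined with $d \mid q-1$, this forces $d = q-1$, so $\Fq^*$ contains an element of order $q-1$, and is therefore cyclic.

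The main obstacle is not really difficult but is the one place where finite-abelian-group theory enters: proving the existence of an element whose order equals the exponent. Everything else (the field being commutative, the bound on the number of roots) is formal. Note that one could instead cite the structure theorem for finite abelian groups and argue that if $\Fq^*$ were a nontrivial product of cyclic groups of orders $n_1 \mid n_2 \mid \cdots \mid n_r$ with $r \geq 2$, then every element would satisfy $x^{n_r} = 1$ with $n_r < q-1$, contradicting the bound on roots; this avoids even the lemma about lcm of orders, at the cost of invoking a heavier theorem.
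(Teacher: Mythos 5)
Your proof is correct, but it takes a genuinely different route from the one in the text. The notes follow Serre's counting argument: one proves the identity $n=\sum_{d\mid n}\phi(d)$ for the Euler totient function, and then shows that \emph{any} finite group $H$ of order $n$ in which the equation $x^d=1$ has at most $d$ solutions for every $d\mid n$ must be cyclic --- the point being that the number of elements of order $d$ is either $0$ or $\phi(d)$, and if it were $0$ for even one divisor the totient identity would undercount $|H|$. You instead reduce to computing the exponent: you prove (via the lcm-of-orders trick) that a finite \emph{abelian} group contains an element whose order is its exponent $e$, and then observe that all $q-1$ elements of $\Fq^*$ are roots of $X^e-1$, forcing $e\geq q-1$, while Lagrange gives $e\mid q-1$. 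Both arguments hinge on the same root-counting bound (Exercise~\ref{exercise:roots}), and both are complete. The trade-off is that Serre's lemma applies to arbitrary finite groups and needs no structure theory beyond the totient identity, whereas your lemma is restricted to abelian groups (harmless here, since $\Fq$ is a field) but is arguably more elementary, avoiding the totient function entirely. Your remark about the alternative via the structure theorem for finite abelian groups is also sound, though, as you note, it invokes a heavier tool than either argument requires.
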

\begin{proof}
  If $d$ is an integer $\geq 1$, then let $\phi(d)$ denote the number of integers $x$ with $1\leq x\leq d$ such that $(x,d)=1$.
  In other words, the image of $x$ in $\Z/d\Z$ is a generator of $\Z/d\Z$.
  The function $\phi(d)$ is called the \emph{Euler totient function}.
  \begin{lemma}
    \label{lemma:totient}
    If $n\geq 1$ is an integer then
    \begin{equation*}
      n=\sum_{d|n}\phi(d).
    \end{equation*}
  \end{lemma}
  \begin{proof}
    If $d|n$,
    let $C_d$ denote the unique subgroup of order $d$ in $\Z/n\Z$, and $\Phi_d$ denote the generators of $C_d$.
    Then $\Z/n\Z$ is the disjoint union of the $\Phi_d$.
    $\Phi_d$ had $\phi(d)$ elements.
    Adding up cardinalities, $n=\sum_{d|n}\phi(d)$.
  \end{proof}
  \begin{lemma}
    Let $H$ be a finite group of order $n$.
    Suppose that, for all divisors $d$ of $n$ the set
    \begin{equation*}
      \{x\in H | x^d=1\}
    \end{equation*}
    has at most $d$ elements. Then $H$ is cyclic.
  \end{lemma}
  \begin{proof}
    Let $d|n$.
    If there exists $x\in H$ of order $d$, the subgroup
    \begin{equation*}
      \langle x \rangle = \{1,x,\ldots,x^{d-1}\}
    \end{equation*}
    is cyclic of order $d$.
    By hypothesis, every element $y$ such that $y^d=1$ is in $\langle x \rangle$.
    In particular, the elements of order $d$ are the generators of $\langle x \rangle$, and these are $\phi(d)$ in number.
    Hence the number of elements of order $d$ is either $0$ or $\phi(d)$.
    If it were zero for some $d|n$, Lemma~\ref{lemma:totient} would show that the number of elements in $H$ is strictly less than $n$, contrary to hypothesis.
    In particular, there exists an element of order $n$ in $H$, and $H$ so $H$ is cyclic of order $n$.
  \end{proof}
  To complete the proof of Theorem~\ref{theorem:multiplicative_group}, note that the equation $x^d=1$ is a polynomial equation, and hence, by Exercise~\ref{exercise:roots} has at most $d$ solutions in $\Fq$.
\end{proof}
\begin{cor}
  \label{cor:perfection}
  Every finite field is perfect.
\end{cor}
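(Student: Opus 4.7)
The plan is to unpack the definition of perfect field and verify, for a finite field $\Fq$ of characteristic $p$, that the Frobenius map $F: x \mapsto x^p$ is a bijection from $\Fq$ to itself. Since $\Fq$ is finite, bijectivity reduces to injectivity, so the entire argument comes down to showing that $F$ has trivial kernel.

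First I would observe that $F$ is a ring homomorphism from $\Fq$ to itself. The multiplicativity $(xy)^p = x^p y^p$ is immediate, while the additivity $(x+y)^p = x^p + y^p$ is the standard freshman's dream in characteristic $p$: apply the binomial theorem and note that every middle binomial coefficient $\binom{p}{i}$ for $1 \le i \le p-1$ is divisible by $p$, hence vanishes in $\Fq$. Since $F(1) = 1$, $F$ is a nonzero ring homomorphism, so its kernel is a proper ideal of the field $\Fq$, which forces $\ker F = 0$. Thus $F$ is injective.

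Now I would invoke finiteness: an injective self-map of a finite set is automatically surjective (pigeonhole), so $F$ is a bijection on $\Fq$. By Definition~\ref{defn:perfect-field}, $\Fq$ is perfect.

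There is no real obstacle here; the only thing to keep in mind is that the statement needs the multiplicativity \emph{and} additivity of Frobenius in characteristic $p$, not merely the group-theoretic fact that $\Fq^*$ is cyclic (Theorem~\ref{theorem:multiplicative_group})—although one could alternatively argue that $\gcd(p, q-1) = 1$ makes the $p$-th power map a bijection on the cyclic group $\Fq^*$, and then extend by $0 \mapsto 0$. Either route works and is essentially a one-line verification.
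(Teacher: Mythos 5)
Your proof is correct, and your primary argument takes a different route than the one the paper implicitly intends. The paper states this as a corollary immediately after Theorem~\ref{theorem:multiplicative_group} (that $\Fq^*$ is cyclic of order $q-1$) and gives no proof text, so the intended argument is evidently the one you mention only as an aside: since $q = p^k$, we have $\gcd(p, q-1) = 1$, so the $p$-th power map is an automorphism of the cyclic group $\Fq^*$, and extending by $0 \mapsto 0$ gives a bijection on $\Fq$. Your main argument instead uses the ring-theoretic fact that the Frobenius $x \mapsto x^p$ is a field homomorphism (additivity via the freshman's dream), hence has trivial kernel because a field has no proper nonzero ideals, hence is injective, hence bijective by finiteness. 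This route is independent of the cyclicity theorem and is arguably cleaner as a standalone argument; the paper's route has the advantage of immediately reusing the theorem just proved, which is why the statement is phrased as a corollary. Both are sound one-paragraph proofs, and you correctly identify the two options.
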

\section{Galois theoretic properties}
\label{sec:Frobenius_norms_traces}
In general, if $E$ is an extension of a field $F$, then every element $x\in E$ can be thought of as an $F$-linear endomorphism of the $F$\dash vector space $E$, when it acts on $E$ by multiplication.
The trace of this map is denoted $\tr_{E/F}(x)$. The function $\tr_{E/F}:E\to F$ is called the \emph{trace function of $E$ over $F$}.
Likewise, the determinant of multiplication by $x$ is denoted $N_{E/F}(x)$. The function $N_{E/F}:E\to F$ is called the \emph{norm map of $E$ over $F$}.

Since $\FQ$ is a quadratic extension of $\Fq$, its Galois group is cyclic of order $2$.
Clearly, the map $F:x\mapsto x^p$ is an automorphism of $\FQ$ that fixes $\Fq$.
Therefore, it must be the non-trivial element in the Galois group of $\FQ$ over $\Fq$.
$F$ is called the \emph{Frobenius automorphism}.
In analogy with complex conjugation, we write $F(x)=\overline{x}$ for each $x\in \FQ$.
\begin{prop}
  Suppose $x\in \FQ$.
  Then $x=\overline{x}$ if and only if $x\in \Fq$.
\end{prop}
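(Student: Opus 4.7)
The plan is to reduce both implications to the defining equation of $\Fq$ inside $\ol\Fp$ and the explicit formula for the Frobenius element of $\Gal(\FQ/\Fq)$. By the construction in Section~\ref{sec:existence_uniqueness}, the subfield $\Fq\subset\ol\Fp$ is \emph{defined} as the set of roots of $X^q-X$; equivalently, an element $x\in\ol\Fp$ lies in $\Fq$ if and only if $x^q=x$. So the whole statement amounts to identifying $\ol x$ with $x^q$.

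For this identification, I would note that $\Gal(\FQ/\Fq)$ has order $2$, and the map $\varphi:x\mapsto x^q$ is visibly a field endomorphism of $\FQ$ (additivity uses characteristic $p$ and $q=p^k$, multiplicativity is obvious) that fixes every $y\in\Fq$ by the first paragraph above. Since $\varphi$ is an $\Fq$\dash endomorphism of the finite field $\FQ$, it is automatically an automorphism, and it is non-trivial because $\FQ\supsetneq\Fq$ (so there exist elements with $x^q\ne x$). Thus $\varphi$ is the unique non-trivial element of $\Gal(\FQ/\Fq)$, i.e.\ $\ol x = F(x)=x^q$ for every $x\in\FQ$.

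Putting the two pieces together: the condition $x=\ol x$ is the same as $x^q=x$, which by the defining property of $\Fq$ recalled above is the same as $x\in\Fq$. This proves both directions simultaneously.

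The only potential obstacle is the small discrepancy in the paper's wording \textquotedblleft$F:x\mapsto x^p$\textquotedblright\ (which in general fixes only $\Fp$, not $\Fq$); I would handle this by observing that what is really meant is the $\Fq$\dash Frobenius $x\mapsto x^q = x^{p^k}$, which is the appropriate generator of $\Gal(\FQ/\Fq)$. With this reading the argument is entirely formal and uses only Theorem~\ref{theorem:finite_fields} together with the description of $\Fq$ as the fixed set of $X^q=X$.
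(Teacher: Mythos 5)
Your proof is correct. The paper itself states this proposition without proof, leaving it to follow implicitly from the preceding paragraph, which identifies $\overline{x}=F(x)$ with the non-trivial element of the Galois group of $\FQ$ over $\Fq$; the unstated ingredient there is the Galois-theoretic fact that the fixed field of the full Galois group of a (separable, normal) quadratic extension is the base field. Your route is more elementary and self-contained: you reduce everything to the defining equation $x^q=x$ of $\Fq$ inside $\ol{\Fp}$ from Section~\ref{sec:existence_uniqueness}, verify directly that $x\mapsto x^q$ is a non-identity automorphism of $\FQ$ fixing $\Fq$ pointwise (non-identity because $X^q-X$ has at most $q$ roots while $|\FQ|=q^2$), and conclude that it must be $F$, so that $x=\overline{x}$ is literally the condition $x^q=x$ defining membership in $\Fq$. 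This buys you a proof that needs no appeal to the fundamental theorem of Galois theory, only Exercise~\ref{exercise:roots} and the construction of $\Fq$. You are also right to flag the slip in the paper: as written, $F:x\mapsto x^p$ fixes only $\Fp$ and is not in general the generator of the Galois group of $\FQ$ over $\Fq$; the intended map is $x\mapsto x^q=x^{p^k}$, and your argument supplies the correct statement and its verification.
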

Let $N$ and $\tr$ denote the norm and trace maps of $\FQ$ over $\Fq$ respectively.
Then
\begin{equation*}
  N(x)=x\ol x, \quad \tr(x)=x|\ol x.
\end{equation*}
Note that for any $x\in \FQ$, $N(x)=0$ if and only if $x=0$.
\begin{xca}
  \label{xca:norm}
  Show that the norm map $N:\FQ^* \to\Fq^*$ is surjective.
  Conclude that for any $x\in \Fq$, the number of elements $y\in \FQ$ such that $N(y)=x$ is
  \begin{equation*}
    \begin{cases}
      q+1 & \mbox{ if } x\neq 0\\
      1& \mbox{ if } x=0.
    \end{cases}
  \end{equation*}
\end{xca}
\section{Identification with Pontryagin dual}
\label{sec:Pontryagin}
Let $\psi_0:\Fq\to \C^*$ be a non-trivial additive character.
Such a character is completely determined by its value at $1$, which can be any $p$th root of unity different from $1$.
Then $\psi:\Fq\to \C^*$ defined by $\psi(x)=\psi(\tr_{\Fq/\Fp}(x))$ is a non-trivial additive character of $\Fq$.
\begin{prop}
  \label{prop:additive-Pontryagin-dual}
  For each $x'\in \Fq$, set $\psi_{x'}(x)=\psi(x'x)$.
  Then $x'\mapsto \psi_{x'}$ is an isomorphism from the additive group of $\Fq$ onto its Pontryagin dual.
\end{prop}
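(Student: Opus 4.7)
The plan is to verify that $x'\mapsto \psi_{x'}$ is a group homomorphism from $(\Fq,+)$ into its Pontryagin dual, show that it is injective, and then invoke a cardinality argument to conclude that it is an isomorphism.

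First I would check that each $\psi_{x'}$ really is a character of $(\Fq,+)$. This is immediate from the fact that $\psi$ is a character and multiplication by $x'$ is $\Fq$\dash linear: $\psi_{x'}(x+y) = \psi(x'x + x'y) = \psi(x'x)\psi(x'y) = \psi_{x'}(x)\psi_{x'}(y)$. Next, I would verify that $x'\mapsto \psi_{x'}$ is a homomorphism. For any $x_1',x_2'\in \Fq$ and $x\in \Fq$,
\begin{equation*}
  \psi_{x_1'+x_2'}(x) = \psi((x_1'+x_2')x) = \psi(x_1'x)\psi(x_2'x) = \psi_{x_1'}(x)\psi_{x_2'}(x),
\end{equation*}
so $\psi_{x_1'+x_2'} = \psi_{x_1'}+\psi_{x_2'}$ in $\hat{\Fq}$.

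The crux of the argument is injectivity. Suppose $\psi_{x'}$ is the trivial character, i.e.\ $\psi(x'x)=1$ for every $x\in \Fq$. If $x'\neq 0$, then the map $x\mapsto x'x$ is a bijection of $\Fq$ onto itself, so this would force $\psi$ to be identically $1$ on $\Fq$, contradicting the hypothesis that $\psi$ is non-trivial. Hence $x'=0$, and the homomorphism is injective. The potentially delicate point here is ensuring that $\psi$ is indeed non-trivial; this is where it matters that $\psi_0$ is a non-trivial character of $\Fp$ and that $\tr_{\Fq/\Fp}:\Fq\to \Fp$ is surjective (a standard fact about the trace of a separable extension, and $\Fq/\Fp$ is certainly separable since $\Fp$ is perfect by Corollary~\ref{cor:perfection}). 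Indeed, surjectivity of the trace guarantees that there exists $x\in \Fq$ with $\psi(x)=\psi_0(\tr_{\Fq/\Fp}(x))\neq 1$.

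Finally, to upgrade injectivity to an isomorphism, I would invoke Proposition~\ref{prop:Pontryagin-duality}: since $\Fq\cong \hat{\Fq}$ as abelian groups, both sides have cardinality $q$, so an injective homomorphism between them is automatically bijective. Thus $x'\mapsto \psi_{x'}$ is the desired isomorphism. The main obstacle, such as it is, is the non-triviality of $\psi$, which as noted rests on the surjectivity of the trace map; the rest of the proof is essentially formal.
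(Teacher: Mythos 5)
Your proof is correct and follows the same route as the paper: establish that $x'\mapsto\psi_{x'}$ is an injective homomorphism, then deduce surjectivity from Proposition~\ref{prop:Pontryagin-duality} (equivalently, equal cardinalities). The paper dispatches the injectivity claim with the word \lq\lq clearly\rq\rq{}, whereas you usefully unpack the two points it hides: that $x\mapsto x'x$ is a bijection for $x'\neq 0$, and that $\psi$ itself is non-trivial because $\tr_{\Fq/\Fp}$ is surjective.
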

\begin{proof}
  The map $x'\mapsto \psi_{x'}$ is clearly an injective homomorphism.
  By Proposition~\ref{prop:Pontryagin-duality}, it must also be onto.  
\end{proof}

\backmatter

\bibliographystyle{amsalpha}
\bibliography{refs}

\providecommand{\bysame}{\leavevmode\hbox to3em{\hrulefill}\thinspace}
\providecommand{\MR}{\relax\ifhmode\unskip\space\fi MR }
\providecommand{\MRhref}[2]{%
  \href{http://www.ams.org/mathscinet-getitem?mr=#1}{#2}
}
\providecommand{\href}[2]{#2}
\begin{thebibliography}{Bum97}

\bibitem[Bum97]{MR1431508}
Daniel Bump, \emph{Automorphic forms and representations}, Cambridge Studies in
  Advanced Mathematics, vol.~55, Cambridge University Press, Cambridge, 1997.
  \MR{MR1431508 (97k:11080)}

\bibitem[DL76]{MR0393266}
P.~Deligne and G.~Lusztig, \emph{Representations of reductive groups over
  finite fields}, Ann. of Math. (2) \textbf{103} (1976), no.~1, 103--161.
  \MR{MR0393266 (52 \#14076)}

\bibitem[G{\'e}r75]{MR0396859}
Paul G{\'e}rardin, \emph{Construction de s\'eries discr\`etes {$p$}-adiques},
  Springer-Verlag, Berlin, 1975, Sur les s\'eries discr\`etes non ramifi\'ees
  des groupes r\'eductifs d\'eploy\'es $p$-adiques, Lecture Notes in
  Mathematics, Vol. 462. \MR{MR0396859 (53 \#719)}

\bibitem[Gre55]{MR0072878}
J.~A. Green, \emph{The characters of the finite general linear groups}, Trans.
  Amer. Math. Soc. \textbf{80} (1955), 402--447. \MR{MR0072878 (17,345e)}

\bibitem[IR90]{MR1070716}
Kenneth Ireland and Michael Rosen, \emph{A classical introduction to modern
  number theory}, second ed., Graduate Texts in Mathematics, vol.~84,
  Springer-Verlag, New York, 1990. \MR{MR1070716 (92e:11001)}

\bibitem[Jac84]{basicalgebra1}
Nathan Jacobson, \emph{Basic algebra}, vol.~I, Hindustan Publishing
  Corporation, 1984.

\bibitem[Jor07]{Jordan07}
Herbert~E. Jordan, \emph{Group-characters of various types of linear groups},
  Amer. J. Math. \textbf{29} (1907), 387--405.

\bibitem[Lus84]{MR804741}
George Lusztig, \emph{Characters of reductive groups over finite fields},
  Proceedings of the International Congress of Mathematicians, Vol.\ 1, 2
  (Warsaw, 1983) (Warsaw), PWN, 1984, pp.~877--880. \MR{MR804741 (86i:20062)}

\bibitem[Mac58]{MR0098328}
George~W. Mackey, \emph{Unitary representations of group extensions. {I}}, Acta
  Math. \textbf{99} (1958), 265--311. \MR{MR0098328 (20 \#4789)}

\bibitem[Sch07]{Schur06}
Issai Schur, \emph{Unterschungen \"uber die {D}arstellung der endlichen
  {G}ruppen durch gebrochene lineare {S}ubstitutionen}, J. Reine Angew. Math.
  \textbf{132} (1906-07).

\bibitem[Ser73]{MR0344216}
J.-P. Serre, \emph{A course in arithmetic}, Springer-Verlag, New York, 1973,
  Translated from the French, Graduate Texts in Mathematics, No. 7.
  \MR{MR0344216 (49 \#8956)}

\bibitem[Shi68]{MR0233931}
Takuro Shintani, \emph{On certain square-integrable irreducible unitary
  representations of some {${\mathfrak p}$}-adic linear groups}, J. Math. Soc.
  Japan \textbf{20} (1968), 522--565. \MR{MR0233931 (38 \#2252)}

\bibitem[Spr70]{MR0263942}
T.~A. Springer, \emph{Cusp forms for finite groups}, Seminar on Algebraic
  Groups and Related Finite Groups (The Institute for Advanced Study,
  Princeton, N.J., 1968/69), Lecture Notes in Mathematics, Vol. 131, Springer,
  Berlin, 1970, pp.~97--120. \MR{MR0263942 (41 \#8541)}

\bibitem[Tan67]{MR0219635}
Shun'ichi Tanaka, \emph{Construction and classification of irreducible
  representations of special linear group of the second order over a finite
  field}, Osaka J. Math. \textbf{4} (1967), 65--84. \MR{MR0219635 (36 \#2714)}

\bibitem[Wei64]{MR0165033}
Andr{\'e} Weil, \emph{Sur certains groupes d'op\'erateurs unitaires}, Acta
  Math. \textbf{111} (1964), 143--211. \MR{MR0165033 (29 \#2324)}

\bibitem[Wig39]{MR1503456}
E.~Wigner, \emph{On unitary representations of the inhomogeneous {L}orentz
  group}, Ann. of Math. (2) \textbf{40} (1939), no.~1, 149--204. \MR{MR1503456}

\bibitem[Zel81]{MR643482}
Andrey~V. Zelevinsky, \emph{Representations of finite classical groups},
  Lecture Notes in Mathematics, vol. 869, Springer-Verlag, Berlin, 1981, A Hopf
  algebra approach. \MR{MR643482 (83k:20017)}

\end{thebibliography}

\Printindex{myindex}{Index of terms}

\end{document}